\theoremstyle{plain} 
\newtheorem{theorem}{Theorem}[section] 
\newtheorem{lemma}[theorem]{Lemma} 
\newtheorem{corollary}[theorem]{Corollary} 
\newtheorem{proposition}[theorem]{Proposition}
\theoremstyle{definition} 
\newtheorem{definition}[theorem]{Definition}
\newtheorem{question}[theorem]{Question}  
\theoremstyle{remark} 
\newtheorem*{remark}{Remark}
\newcommand{\bR}{\mathbb{R}} 
\newcommand{\R}{\bR}
\newcommand{\bN}{\mathbb{N}} 
\newcommand{\N}{\bN}
\newcommand{\cA}{\mathcal{A}} 
\newcommand{\cB}{\mathcal{B}}
\newcommand{\cF}{\mathcal{F}} 
\newcommand{\cI}{\mathcal{I}} 
\newcommand{\I}{\cI} 
\newcommand{\cJ}{\mathcal{J}} 
\newcommand{\J}{\cJ} 
\newcommand{\cM}{\mathcal{M}} 
\newcommand{\cP}{\mathcal{P}}
\newcommand{\cS}{\mathcal{S}}  
\newcommand{\cU}{\mathcal{U}}  
\newcommand{\cW}{\mathcal{W}} 
\newcommand{\cZ}{\mathcal{Z}}  
\newcommand{\continuum}{\mathfrak{c}}
\newcommand{\eu}{\mathcal{EU}} 
\renewcommand{\subset}{\subseteq}
\DeclareMathOperator{\dom}{dom}
\DeclareMathOperator{\ran}{ran}
\DeclareMathOperator{\Exh}{Exh}
\DeclareMathOperator{\fin}{Fin} 
\DeclareMathOperator{\Fin}{Fin}
\DeclareMathOperator{\exh}{Exh}
\begin{document}

\title{Path of pathology}

\author{Rafa\l{} Filip\'{o}w}
\address[R.~Filip\'{o}w]{Institute of Mathematics\\ Faculty of Mathematics, Physics and Informatics\\ University of Gda\'{n}sk\\ ul. Wita Stwosza 57\\ 80-308 Gda\'{n}sk\\ Poland}
\email{Rafal.Filipow@ug.edu.pl}
\urladdr{http://mat.ug.edu.pl/~rfilipow}

\author{Jacek Tryba}
\address[J.~Tryba]{Institute of Mathematics,
Faculty of Mathematics, Physics and Informatics,
University of Gda\'{n}sk,
ul. Wita Stwosza 57,
80-308 Gda\'{n}sk,
Poland}
\email{Jacek.Tryba@ug.edu.pl}

\date{\today}

\subjclass[2020]{Primary:
03E05, 
Secondary:
11B05, 
03E15 
}

\keywords{%
ideal, pathological ideal, nonpathological submeasure, degree of pathology,  $F_\sigma$ ideal, matrix ideal, generalized density ideal, van der Waerden ideal, Josefson-Nissenzweig property, Kat\v{e}tov order.
} 


\begin{abstract}
We present a few results about (non)pathology of submeasures and ideals.
\end{abstract}


\maketitle



\section{Introduction}

The paper consists of  two parts. In the first part (Sections~\ref{sec:def-of-pathol}--\ref{sec:degree-of-pathol}) 
we examine  various kinds of (non)pathological submeasures that are considered in the literature, whereas the  second part 
(Sections~\ref{sec:def-of-path-ideal}--\ref{sec:non-borel-intersection})
is devoted to (non)pathological ideals.
Below, we briefly describe the content of each section. 

In Section~\ref{sec:def-of-pathol}, we provide various definitions of (non)pathological submeasures that are considered in the literature and in Section~\ref{sec:properties-of-pathol}  we summarize basic properties of these  kinds of submeasures.
In Sections~\ref{sec:examples-of-nonpathol} and \ref{sec:examples-of-pathol} we present some examples of (non)pathological submeasures. 
The relationships and examples  from these  sections are shown in  concise graphical form
in  Figure~\ref{fig:pathology-vs-lsc-for-submeasure}.

In Section~\ref{sec:degree-of-pathol}, we examine various kinds of  degrees of pathology
of submeasures that are considered in the literature. The results from this section  are summarized in Table~\ref{tab:P-like-degrees}.

The second part of the paper starts with  Section~\ref{sec:def-of-path-ideal} where we present definitions, examples and characterizations of nonpathological ideals. 

In Section~\ref{sec:matrix}, we prove a characterization of ideals that can be represented as the intersections of matrix summability ideals. The characterization is expressed in terms of the Kat\v{e}tov order and restrictions of the ideal of asymptotic density zero sets. This characterization resembles a characterization of pathological analytic P-ideals given by Hru\v{s}\'{a}k in \cite[Corollary~5.6]{hrusak-katetov}. 
As a by-product, we obtain that the Solecki ideal is pathological which answers a question of Mart\'{i}nez, Meza-Alc\'{a}ntara and  Uzc\'{a}tegui \cite[Question~3.13]{martinez2022pathology}. 

In Section~\ref{sec:GDI}, 
we  solve a problem of Borodulin-Nadzieja and Farkas \cite[Problem~4.3]{MR4124855}. Namely, we  show  that the ideal 
of  exponential density  zero sets is 
 a nonpathological analytic P-ideal which is not a special variant of a density ideal generated by a family of measures. 

In Section~\ref{sec:Fsigma-and-degree-of-pathol}, 
we answer two more   questions of Mart\'{i}nez, Meza-Alc\'{a}ntara and  Uzc\'{a}tegui \cite[Questions~3.6 and 3.10]{MR4797308}. Since the second question is quite technical, we describe here only the first one.
We 
construct  a   submeasure $\phi$ which has infinite degree of pathology but the ideal $\Fin(\phi)$ is nonpathological and the degrees of pathology of all restrictions of $\phi$ to sets from the ideal $\Fin(\phi)$ are finite.
In the same section we also characterize nonpathological $F_\sigma$ ideals as those which can be represented as the intersection of summable ideals. This result strengthen a known theorem in which summable ideals are replaced by matrix ideals. 

In Section~\ref{sec:vdW}, we prove that the van der Waerden ideal (which consists of those subsets of integers which do not contain arithmetic progressions of  arbitrary finite length) is  nonpathological.

In Section~\ref{sec:JNP}, we consider the  Josefson-Nissenzweig property which has been recently introduced and examined by Marciszewski and Sobota \cite{marciszewski-sobota}.
We prove that the disjoint union of topological spaces has the Josefson-Nissenzweig property if and only if at least one of these spaces has this property. As a corollary, we obtain  that a variant of Mr\'{o}wka-Isbell space defined with the aid of almost disjoint family $\cA$ of sets which are not in  a given ideal $\I$ has the Josefson-Nissenzweig property if and only if the restriction of $\I$ to some set from the family $\cA$ extends to a matrix ideal.

In Section~\ref{sec:non-borel-intersection}, we  alter a result of  
Hru\v{s}\'{a}k and Garc\'{i}a Ferreira \cite[Theorem~3.9]{MR2017358} to show that consistently for every tall ideal $\I$ there exists a maximal almost disjoint family $\cA\subseteq\I$ such that the ideal $\I(\cA)$ generated by $\cA$ is homogeneous in the sense of Kat\v{e}tov order. Next, we show that this ideal $\I(\cA)$ is a non-Borel ideal which is the intersection of matrix ideals. This answer the question posed by the authors in \cite[Question~3]{ft-mazur}.


\section{Preliminaries}

We identify an ordinal number $\alpha$  with the set of all ordinal numbers less than $\alpha$. 
In particular, the  smallest infinite ordinal number $\omega=\{0,1,\dots\}$ is equal to the set of all natural numbers $\N$, and each natural number $n = \{0,\dots,n-1\}$ is equal  to the set of all natural numbers less than $n$.
Using this identification, we can for instance write $n\in k$ instead of $n<k$ and $n<\omega$ instead of $n\in \omega$ or $A\cap n$ instead of $A\cap \{0,1,\dots,n-1\}$. 


\subsection{Ideals}

\begin{definition}
\label{def:ideal}
An \emph{ideal} on a nonempty set $X$ is a  family $\I\subseteq\cP(X)$ that satisfies the following properties:
\begin{enumerate}
\item $\emptyset\in \I$ and $X\not\in\I$,
\item if $A,B\in \I$ then $A\cup B\in\I$,
\item if $A\subseteq B$ and $B\in\I$ then $A\in\I$.
\end{enumerate}
In the second part of the paper (starting at page \pageref{part:ideals}), we alter the definition of an ideal. Namely, we will additionally assume there that  
an ideal $\I$ on $X$ has to contain all \emph{finite subsets of $X$}.
\end{definition}

For an ideal $\I$,  we write $\I^+=\{A\subseteq \N: A\notin\I\}$ and call it the \emph{coideal of $\I$}, 
 we also write $\I^*=\{A\subseteq \N: \N\setminus A\in\I\}$ and call it the \emph{dual filter of $\I$}.
By ${\fin}$ we mean the ideal of all finite subsets of $\N$.
An ideal $\I$ is
a \emph{P-ideal} if for every countable family $\cA\subset\I$ there is $B\in\I$ such that $A\setminus B$ is finite for every $A\in\cA$.
An ideal $\I$ is \emph{tall} (a.k.a. \emph{dense}) if for every infinite $A\subseteq X$ there is an infinite $B\in\I$ such that $B\subseteq A$.
For an ideal $\I$ and a set $A\not\in\I$ we define the \emph{restriction of the ideal}
$\I$ to $A$ by 
$\I\upharpoonright A= \{B\cap A:  B\in\I\}$.
An ideal $\I$ on $X$ is called \emph{maximal} if there is no ideal $\J$ on $X$ with $\I\subseteq \J$ and $\I\neq\J$. It is known that an ideal $\I$ on $X$ is maximal if and only if for each $A\subseteq X$ either $A\in \I$ or $X\setminus A\in \I$. 
Dual filters of maximal ideals are called \emph{ultrafilters}.

Let $\I$ and $\J$ be ideals on $X$ and $Y$ respectively.
We say that $\I$ and $\J$ are \emph{isomorphic} (in short $\I\approx\J$) 
if there exists a bijection $f:X\rightarrow Y$ such that $A\in\I\iff f[A]\in\J$ for every $A\subset X$.
We say that \emph{$\J$ is below $\I$ in  Kat\v{e}tov order} (in short $\J\leq_{K}\I$) if there is a function $f:X\rightarrow Y$ such that $A\in\J \implies f^{-1}[A]\in\I$
for every $A\subseteq X$. 
We say that $\I$ and $\J$ are \emph{Kat\v{e}tov equivalent} (in short $\I\approx_K\J$
or $\leq_{K}$-equivalent) if $\I\leq_{K}\J$ and $\J\leq_{K}\I$.
We say that $\I$ is \emph{K-homogeneous} if $\I$ and $\I\restriction A$ are Kat\v{e}tov equivalent for every $A\notin \I$.

By identifying sets of natural numbers with their characteristic functions,
we equip $\cP(\omega)$ with the topology of the Cantor space $\{0,1\}^\omega$ and therefore
we can assign topological complexity to ideals on $\omega$.
In particular, an ideal $\I$ is Borel ($F_\sigma$, analytic, resp.) if $\I$ is Borel ($F_\sigma$, analytic, resp.) as a subset of the Cantor space.


\subsection{Submeasures}
\label{sec:Measures}

A map $\phi:\cP(X)\to[0,\infty]$ is a \emph{measure} (\emph{$\sigma$-measure}, resp.) on a nonempty set $X$  if $\phi(\emptyset)=0$ and $\phi$ is finitely (countably, resp.) additive. 

For a function $f:\omega\to[0,\infty)$,  we define \label{def:measure-for-summable-ideal}
a   $\sigma$-measure $\mu_f$ on $\omega$  by 
$$\mu_f(A)= \sum_{n\in A} f(n) .$$
It is not difficult to see that 
$\phi$ is a $\sigma$-measure on $\omega$ $\iff$
$\phi =\mu_f$ for some function $f:\omega\to[0,\infty]$.

For an ideal $\I$ on $\omega$, we define a measure $\delta_\I^\infty$ on $\omega$
by 
$$
\delta_\I^\infty(A)=
\begin{cases}
    0&\text{if $A\in \I$,}\\
    \infty&\text{otherwise,}
\end{cases}
$$
and for a  maximal ideal $\I$, we define a measure $\delta_\I$ by  
$$\delta_\I(A)=
\begin{cases}
    0&\text{if $A\in \I$,}\\
    1&\text{otherwise.}
\end{cases}
$$

To obtain more examples of measures, one can use the so called \emph{ultrafilter limits} (see e.g.~\cite{MR1845008}). Namely, if 
$\cU$ is an ultrafilter  on $\omega$
and 
$\mu_n$ is a  measure on $X$ for each $n\in\omega$, then 
$$\nu(A) = \lim_{n\in \cU}\mu_n(A)$$
is a measure on $X$.

\begin{definition}
A map $\phi:\cP(X)\to[0,\infty]$ is a \emph{submeasure} on a set $X$ if 
\begin{enumerate}
\item $\phi(\emptyset)=0$,
\item if $A\subset B$ then $\phi(A)\leq \phi(B)$,
\item $\phi(A\cup B) \leq \phi(A)+\phi(B)$.
\end{enumerate}
\end{definition}

We say that a submeasure $\phi$ is \emph{lower semicontinuous} (in short: \emph{lsc}) if 
$$\phi(A)=\sup\{\phi(F):\text{$F$ is a finite subset of $A$}\}$$ 
for each $A\subseteq X$. If $X=\omega$, then $\phi$ is lsc $\iff$  
$\phi(A)=\lim_{n\to\infty} \phi(A\cap n).$

Every measure is a submeasure, and  a measure 
$\phi$ is a $\sigma$-measure $\iff$ $\phi$ is lower semicontinuous.
Moreover,  if $\phi$ is a nonzero submeasure on $X$, then 
    $$\cZ_\phi=\{A\subseteq X: \phi(A)=0\}$$ is an ideal on $X$,
    whereas if $\I$ is an ideal on $X$, then 
$$\delta_\I(A)=
\begin{cases}
    0&\text{if $A\in \I$,}\\
    1&\text{otherwise}
\end{cases}
$$
is a submeasure on $X$ with $\cZ_{\delta_\I}=\I$.

To obtain more examples of submeasures on $\omega$, one can observe that  if $\phi_n$ is a  submeasure on $X$ for every $n\in \omega$, then 
$$\phi(A) = \limsup_{n\to\infty}\phi_n(A)$$
is a submeasure on $X$, 
and if  $\cS$ is a family of submeasures (lsc submeasures, resp.) on $X$, then
$$\psi(A)=\sup\{\phi(A):\phi\in \cS \}$$ 
is a  submeasure (lsc submeasures, resp.) on $X$.


\part{(Non)pathological submeasures}
\label{part:submeasures}

In this part we present thorough analysis of various  kinds   of (non)pathological submeasures that are considered in the literature. 


\section{Definition of (non)pathological submeasures}
\label{sec:def-of-pathol}

For two submeasures $\phi$ and $\psi$ on a set $X$, we say that $\psi$ \emph{dominates} $\phi$ and write
$\phi\leq \psi$ if $\phi(A)\leq \psi(A)$ for every $A\subseteq X$.

For a submeasure $\phi$ on $X$, we define 
two submeasures   $\widehat{\phi}$ and $\widehat{\phi}_\sigma$ on $X$ 
by
\begin{equation*}
    \begin{split}
\widehat{\phi}(A) 
& = 
\sup\{\mu(A) : \text{$\mu$ is a \ \ measure on $X$  such that $\mu \leq \phi$}\},
\\
\widehat{\phi}_\sigma(A) 
&= 
\sup\{\mu(A) : \text{$\mu$ is a $\sigma$-measure on $X$ such that $\mu \leq \phi$}\}.
    \end{split}
\end{equation*}

\begin{definition}
\label{def:nonpathological-submeasure}
Let $\phi$ be a submeasure on $X$.
\begin{enumerate}
    \item 
A submeasure $\phi$ is called  \emph{nonpathological}  
if 
$\phi=\widehat{\phi}$,  
otherwise $\phi$ is called \emph{pathological} (Farah, \cite[p.~21]{MR1711328}).
\item A submeasure $\phi$ is called  \emph{$\sigma$-nonpathological} 
if 
$\phi=\widehat{\phi}_\sigma$,  
otherwise $\phi$ is  called   \emph{$\sigma$-pathological}. 
\end{enumerate}
\end{definition} 

The following remark shows that there is a mess with definitions of (non)pathological   submeasures.  

\begin{remark}\ 
\begin{enumerate}
    \item 
In \cite{MR0419712}, Tops\o{}e introduced the notion of $\varepsilon$-pathological submeasures
and one can show  that a submeasure $\phi$ is pathological (in the sense of Farah) $\iff$ 
there exists  $A\subseteq X$ and $\varepsilon<1$
such that the restriction $\phi\restriction\cP(A)$ is  $\varepsilon$-pathological (in the sense of Tops\o{}e).

\item 
To make life a bit confusing, we can also find out that in \cite{MR0419712} Tops\o{}e  has a  definition  of a nonpathological submeasure which is not coherent with the notion introduced by Farah, namely a submeasure $\phi$ is nonpathological in the sense of Tops\o{}e if $\widehat{\phi}=0$ (i.e.~there is no nonzero measure dominated by $\phi$). 
It is easy to see that pathology in the sense of Tops\o{}e is stronger than pathology in the sense of Farah (Proposition~\ref{prop:pathology-Topsoe}(\ref{prop:pathology-Topsoe:stronger-then-Farah})).
On the other hand, the pathology in the sense of Tops\o{}e  is useless in the realm of \emph{lower semicontinuous} submeasure  as is shown in Proposition~\ref{prop:pathology-Topsoe}(\ref{prop:pathology-Topsoe:lsc}).

\item To make life even more complicated, in \cite{MR4797308} (see also  \cite[p.~3]{martinez2022fsigma} and \cite[p.~4]{martinez2022pathology}), the authors define a submeasure $\phi$ to be nonpathological if $\phi=\widehat{\phi}_\sigma$. 
Obviously, $\phi=\widehat{\phi}_\sigma$ implies $\phi=\widehat{\phi}$. 
The converse implication does not hold in general (see Proposition~\ref{prop:properties-of-delta-submeasures}(\ref{prop:properties-of-delta-submeasures:delta-ideal})), but it holds  for  \emph{lower semicontinuous} submeasures (see Proposition~\ref{prop:properties-of-phi-hat}(\ref{prop:properties-of-phi-hat:all-sets-for-lsc})).
Moreover, there exists a pathological lower semicontinuous submeasure $\phi$ such that $\widehat{\phi}_\sigma\neq \widehat{\phi}$ (see Proposition~\ref{prop:pathological-lsc-submeasures}).
\end{enumerate}
\end{remark}

The following proposition (a sort of the extreme value theorem) seems to be  a folklore, but we decided to include the proof for the sake of completeness because we did not find any reference which contains a proof. 

\begin{proposition}
\label{prop:phi-hat-vs-measure}
     For every 
 submeasure  $\phi$ on $X$
 and every 
$A\subseteq X$ there exists a measure $\mu$ on $X$ such that $\mu\leq\phi$ and $\widehat{\phi}(A)=\mu(A)$.\label{prop:phi-hat-vs-measure:measure} 
In particulary, 
if $\phi$ is nonpathological, then for every $A\subseteq X$ there exists a measure $\mu$ on $X$ such that $\mu\leq\phi$ and $\phi(A)=\mu(A)$.
\end{proposition}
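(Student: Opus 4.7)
The plan is to realize the supremum in the definition of $\widehat{\phi}(A)$ as a maximum via a standard Tychonoff compactness argument applied to the space of all finitely additive measures dominated by $\phi$.

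First, I would identify each map $\mu : \cP(X) \to [0,\infty]$ with a point of the product space $K = \prod_{B \subseteq X} [0,\phi(B)]$ (here $[0,\phi(B)]$ is interpreted as a closed subinterval of the compact space $[0,\infty]$, so it includes the case $\phi(B)=\infty$). By Tychonoff's theorem, $K$ is compact in the product topology (the topology of pointwise convergence). Every $\mu \in K$ automatically satisfies $\mu \leq \phi$, so it remains to cut out the finitely additive measures.

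Next, I would verify that the subset
\[
M = \{\mu \in K : \mu(\emptyset)=0,\ \mu(B\cup C)=\mu(B)+\mu(C) \text{ for all disjoint } B,C \subseteq X\}
\]
is closed in $K$. Each defining condition is a closed condition on finitely many coordinates: $\mu(\emptyset)=0$ fixes one coordinate, and the relation $\{(x,y,z)\in[0,\infty]^3 : x+y = z\}$ is closed, hence $\{\mu : \mu(B)+\mu(C) = \mu(B\cup C)\}$ is closed for each disjoint pair $B,C$. Thus $M$ is a closed subset of a compact space, so $M$ is itself compact. It is also nonempty, since the zero measure lies in $M$.

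Finally, for the fixed set $A\subseteq X$, the evaluation map $\mathrm{ev}_A : M \to [0,\infty]$ defined by $\mathrm{ev}_A(\mu) = \mu(A)$ is continuous, as it is just the projection onto the $A$-th coordinate. A continuous function on a compact space attains its supremum, so there exists $\mu \in M$ with
\[
\mu(A) = \sup\{\nu(A) : \nu \in M\} = \widehat{\phi}(A),
\]
which is exactly the conclusion. The ``in particular'' clause then follows immediately, since nonpathology is the equality $\phi(A) = \widehat{\phi}(A)$.

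The main subtlety — really the only one — is the correct handling of $\infty$ so that both additivity and the domination $\mu \leq \phi$ remain closed conditions; this works out because addition in $[0,\infty]$ has a closed graph and each factor $[0,\phi(B)]$ is closed in $[0,\infty]$. No countable additivity is required (which would not be a closed condition), since the statement is only about measures, not $\sigma$-measures.
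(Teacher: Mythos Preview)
Your proof is correct and follows essentially the same compactness argument as the paper: both realize the space of dominated measures as a closed subset of a Tychonoff product and then extract a maximizer. Your version is slightly more streamlined in that working in $[0,\infty]$ handles the case $\widehat{\phi}(A)=\infty$ uniformly, whereas the paper splits off that case by hand and then runs the compactness argument in $[0,\widehat{\phi}(A)]^{\cP(A)}$.
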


\begin{proof}
Take any $A\subseteq X$.
We have two cases: $\widehat{\phi}(A)=\infty$ and $\widehat{\phi}(A)<\infty$.

\emph{Case: $\widehat{\phi}(A)=\infty$.}
We define a function $\mu:\cP(X)\to [0,\infty]$  by 
$$\mu(B) = \begin{cases}
    \infty& \text{if $\widehat{\phi}(B)=\infty$,}\\
    0&\text{otherwise.}
\end{cases}$$
Then $\mu$ is a measure on $X$,  $\mu\leq \phi$ and $\mu(A)=\widehat{\phi}(A)$, so we are done.

\emph{Case: $\widehat{\phi}(A)<\infty$.}
Let $\cM$ be a family of all measures $\mu:\cP(A)\to [0,\widehat{\phi}(A)]$ such that $\mu(B)\leq \phi(B)$ for every $B\subseteq A$. We claim that $\cM$ is a closed set in the space $[0,\widehat{\phi}(A)]^{\cP(A)}$ with the product topology.
Indeed, take any  $f\in [0,\widehat{\phi}(A)]^{\cP(A)} \setminus \cM$.
Then we have two cases: 
(a) $f$ is not a measure 
or 
(b) $f$ is a measure. 

Case (a). 
Then we can find  disjoint sets $A_0,A_1\subseteq A$ with $f(A_0\cup A_1)\neq f(A_0)+f(A_1)$. 
For  $\varepsilon = |f(A_0\cup A_1) - (f(A_0)+f(A_1))|/4>0$, we define 
$$U = \{g\in [0,\widehat{\phi}(A)]^{\cP(A)}: |g(A_0\cup A_1) - f(A_0 \cup A_1)|<\varepsilon , |g(A_i)-f(A_i)|<\varepsilon \text{ for $i<2$}\}.$$
Then $U$ is an open neighborhood of $f$ and $U\cap \cM=\emptyset$. 

Case (b). 
Since $f\notin \cM$, we can find  $B\subseteq A$ with $f(B)>\phi(B)$.
For $\varepsilon = (f(B)-\phi(B))/3>0$, we define 
$$U = \{g\in [0,\widehat{\phi}(A)]^{\cP(A)}: |g(B) - f(B)|<\varepsilon\}.$$
Then $U$ is an open neighborhood of $f$ and $U\cap \cM=\emptyset$. 

In both cases, we find an open neighborhood of $f$ which is disjoint from $\cM$, thus $\cM$ is closed.
Since $[0,\widehat{\phi}(A)]^{\cP(A)}$ is a compact space, we obtain that $\cM$ is compact as well.
For each  $n\in \omega$, we define 
$$C_n = \{\mu\in \cM: |\mu(A)-\widehat{\phi}(A)|\leq 1/(n+1)\}.$$
Then $C_n$ is a decreasing sequence of closed sets, so its intersection is nonempty. Let  $\mu\in \bigcap_{n\in \omega}C_n\subseteq\cM$. Then $\mu$ is a measure such that $\mu\leq \phi$ and $\mu(A)=\widehat{\phi}(A)$, so we are done.
\end{proof}


\section{Properties of (non)pathological submeasures}
\label{sec:properties-of-pathol}

In Proposition~\ref{prop:properties-of-submeasures}  we summarize basic properties of (non)pathological submeasures and in Sections~\ref{sec:examples-of-nonpathol} and \ref{sec:examples-of-pathol} we provide some examples of (non)pathological submeasures. 
We present these relationships and examples  in concise graphical form
in  Figure~\ref{fig:pathology-vs-lsc-for-submeasure}.

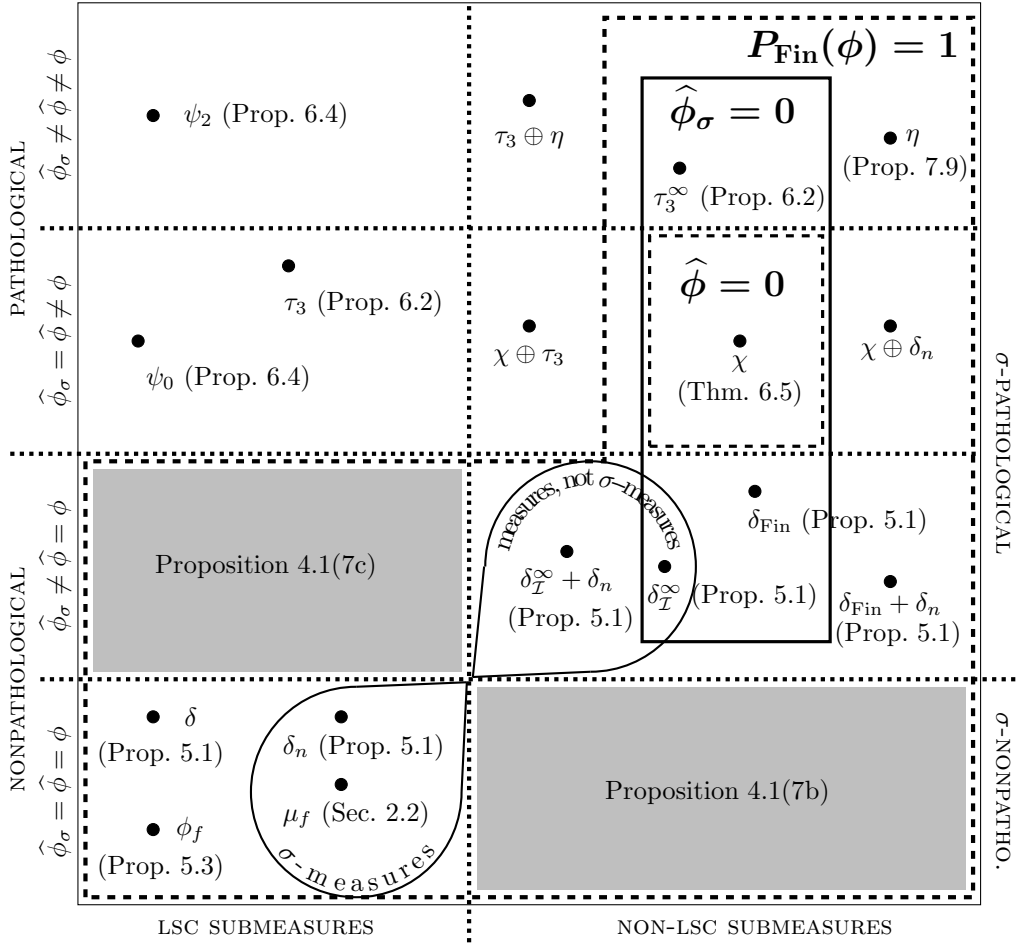
\begin{figure}
\centering
\begin{tikzpicture}
\draw (-6,-6) rectangle (6,6);

\draw[ultra thick,dotted] (-0.8,-6.5) -- (-0.8,6);
\node at (-3.5,-6.3) {\textsc{lsc submeasures}};
\node at (3,-6.3) {\textsc{non-lsc submeasures}};

\draw[ultra thick,dotted] (-6.5,-3) -- (6.5,-3);
\draw[ultra thick,dotted] (-6.9,0) -- (6,0);
\draw[ultra thick,dotted] (-6.5,3) -- (6,3);
\node[rotate=90] at (-6.8,-3) {\textsc{nonpathological}};
\node[rotate=-90] at (6.3,-4.5) {\textsc{$\sigma$-nonpatho.}};
\node[rotate=90] at (-6.3,-4.5) {$\widehat{\phi}_\sigma=\widehat{\phi}=\phi$};
\node[rotate=90] at (-6.3,-1.5) {$\widehat{\phi}_\sigma \neq\widehat{\phi}=\phi$};
\node[rotate=90] at (-6.8,3) {\textsc{pathological}};
\node[rotate=-90] at (6.3,0) {\textsc{$\sigma$-pathological}};
\node[rotate=90] at (-6.3,1.5) {$\widehat{\phi}_\sigma=\widehat{\phi}\neq \phi$};
\node[rotate=90] at (-6.3,4.5) {$\widehat{\phi}_\sigma\neq\widehat{\phi}\neq \phi$};

\draw[thick] (0.815,-2.9) arc [start angle=-90,end angle=180,radius=1.4]; 
\draw[thick] (-0.75,-2.97) -- (0.815,-2.9);
\draw[thick] (-0.75,-2.97) -- (-0.6,-1.5);
\draw[decorate,decoration={text along path,  reverse path=true, raise=-12pt, text={measures, not {$\sigma$}--measures},text align=center}]
(2.3,-1.5) arc [start angle=0,end angle=180,radius=1.5];

\draw[thick] (-2.3,-3.1) arc [start angle=90,end angle=360,radius=1.4]; 
\draw[thick] (-0.83,-3.04) -- (-2.3,-3.1);
\draw[thick] (-0.83,-3.04) -- (-0.9,-4.5);
\draw[decorate,decoration={text along path, reverse path=false,  raise=-12pt, text={{$\sigma$}-measures},text align=center}]
(-3.2,-4.5) arc [start angle=180,end angle=360,radius=0.9];

\fill [fill=gray!50] (-5.8,-2.9) rectangle (-0.9,-0.2);
\fill [fill=gray!50] (5.8,-5.8) rectangle (-0.7,-3.1);

\node at (-3.5,-1.5) {Proposition~\ref{prop:properties-of-phi-hat}(\ref{prop:properties-of-phi-hat:all-sets-for-lsc:lsc})};

\node at (2.5,-4.5) {Proposition~\ref{prop:properties-of-phi-hat}(\ref{prop:properties-of-phi-hat:all-sets-for-lsc:not-lsc})};

\draw[very thick] (1.5,-2.5) rectangle (4,5);
\node at (2.7,4.6) {\LARGE $\boldsymbol{\widehat{\phi}_\sigma=0}$};

\draw[fill] (2,3.8) circle (0.08cm);
\node[thick] at (2.8,3.4) {$\tau_3^\infty$ (Prop.~\ref{prop:tau-3-infty})};

\draw[fill] (4.8,4.2) circle (0.08cm);
\node[thick] at (5.1,4.2) {$\eta$};
\node[thick] at (5,3.8) {(Prop.~\ref{prop:eta})};

\draw[fill] (0,4.7) circle (0.08cm);
\node[thick] at (0,4.2) {$\tau_3\oplus \eta$};

\draw[very thick, dashed] (1.6,0.1) rectangle (3.9,2.9);
\node at (2.7,2.3) {\LARGE $\boldsymbol{\widehat{\phi}=0}$};

\draw[ultra thick,dashed] (-5.9,-5.9) -- (-5.9,-0.1) -- (1,-0.1) -- (1,5.8) -- (5.9,5.8) -- (5.9,-0.1) -- (5.9,-0.1) -- (5.9,-5.9) -- (-5.9,-5.9);
\node at (4.3,5.4) {\LARGE $\boldsymbol{P_{\fin}(\phi)=1}$};

\draw[fill] (-2.5,-3.5) circle (0.08cm);
\node[thick] at (-2.2,-3.9) {$\delta_n$ (Prop.~\ref{prop:properties-of-delta-submeasures})};

\draw[fill] (-2.5,-4.4) circle (0.08cm);
\node[thick] at (-2.3,-4.8) {$\mu_f$ (Sec.~\ref{sec:Measures})};

\draw[fill] (-5,-3.5) circle (0.08cm);
\node[thick] at (-4.5,-3.5) {$\delta$};
\node[thick] at (-4.9,-4) {(Prop.~\ref{prop:properties-of-delta-submeasures})};

\draw[fill] (-5,-5) circle (0.08cm);
\node[thick] at (-4.5,-5) {$\phi_f$};
\node[thick] at (-4.9,-5.5) {(Prop.~\ref{prop:properties-of-asymptotic-densities})};

\draw[fill] (3,-0.5) circle (0.08cm);
\node[thick] at (4.1,-0.9) {$\delta_{\fin}$ (Prop.~\ref{prop:properties-of-delta-submeasures})};

\draw[fill] (1.8,-1.5) circle (0.08cm);
\node[thick] at (2.7,-1.9) {$\delta_\I^\infty$ (Prop.~\ref{prop:properties-of-delta-submeasures})};

\draw[fill] (4.8,-1.7) circle (0.08cm);
\node[thick] at (4.8,-2) {{$\delta_{\fin}+\delta_n$}};
\node[thick] at (4.9,-2.4) { (Prop.~\ref{prop:properties-of-delta-submeasures})};

\draw[fill] (0.5,-1.3) circle (0.08cm);
\node[thick] at (0.5,-1.7) {{$\delta_\I^\infty+\delta_n$}};
\node[thick] at (0.55,-2.2) { (Prop.~\ref{prop:properties-of-delta-submeasures})};

\draw[fill] (-3.2,2.5) circle (0.08cm);
\node[thick] at (-2.2,2) {$\tau_3$ (Prop.~\ref{prop:tau-3-infty})};

\draw[fill] (-5.2,1.5) circle (0.08cm);
\node[thick] at (-4,1) {$\psi_0$ (Prop.~\ref{prop:pathological-lsc-submeasures})};

\draw[fill] (-5,4.5) circle (0.08cm);
\node[thick] at (-3.5,4.5) {$\psi_2$ (Prop.~\ref{prop:pathological-lsc-submeasures})};

\draw[fill] (2.8,1.5) circle (0.08cm);
\node[thick] at (2.8,1.2) {{$\chi$}};
\node[thick] at (2.8,0.8) { (Thm.~\ref{thm:herer-christensen:phi-hat-equals-zero})};

\draw[fill] (4.8,1.7) circle (0.08cm);
\node[thick] at (4.9,1.4) {$\chi\oplus\delta_n$};

\draw[fill] (0,1.7) circle (0.08cm);
\node[thick] at (0,1.3) {$\chi\oplus\tau_3$};

\end{tikzpicture}

    \caption{Relationships between (non)lsc and (non)pathological nonzero submeasures on $\omega$ (in  gray regions there are no submeasures).}
    \label{fig:pathology-vs-lsc-for-submeasure}
\end{figure}

\begin{proposition}
\label{prop:properties-of-submeasures}
\label{prop:properties-of-phi-hat} 
\label{prop:properties-of-phi-hat-sigma}
\label{prop:pathology-Topsoe}
Let $\phi$ be a \emph{nonzero} submeasure on $\omega$.
\begin{enumerate}
\item $\widehat{\phi}$ and $\widehat{\phi}_\sigma$
are submeasures, and  $\widehat{\phi}_\sigma\leq \widehat{\phi}\leq \phi$.
Consequently, if $\phi$
is $\sigma$-nonpathological, then $\phi$ is nonpathological.\label{prop:properties-of-phi-hat:submeasure} 

\item $\widehat{\phi}_\sigma$ is lower semicontinuous. In general, $\widehat{\phi}$ may be not lower semicontinuous (see Proposition~\ref{prop:properties-of-delta-submeasures}(\ref{prop:properties-of-delta-submeasures:delta-ideal-infinity})), even if  $\phi$ is lsc (see Proposition~\ref{prop:pathological-lsc-submeasures}(\ref{prop:pathological-lsc-submeasures:item-non-lsc-hat})).\label{prop:properties-of-phi-hat:sigma-hat-is-lsc} 
        
        \item 
If $\widehat{\phi}=0$, then $\phi$ is pathological.\label{prop:pathology-Topsoe:stronger-then-Farah}

\item 
\begin{enumerate}
\item If $\phi(\omega)=\infty$, then $\widehat{\phi}\neq 0$.\label{prop:pathology-Topsoe:infinite}

\item If $\phi(F)\neq 0$ for some finite set $F$, then   $\widehat{\phi}_\sigma\neq 0$.
Consequently, if $\phi$ is lower semicontinuous, then   $\widehat{\phi}_\sigma\neq 0$.\label{prop:pathology-Topsoe:lsc}

\item If $\phi(F)=0$  for every finite set $F$, then $\phi$ is not lower semicontinuous and $\widehat{\phi}_\sigma=0\neq \phi$.\label{prop:properties-of-submeasures:vanishes-on-FIN}
\end{enumerate}

\item 
\begin{enumerate}

\item $\widehat{\phi}(F)=\widehat{\phi}_\sigma(F)$ for every finite set $F$.\label{prop:properties-of-phi-hat:finite-sets}

\item If  $\phi(\omega\setminus F)=0$
for some finite set $F$, then $\widehat{\phi}_\sigma = \widehat{\phi}$.\label{prop:properties-of-phi-hat:cofinite-set}
\end{enumerate}

\item \label{prop:properties-of-submeasures:measures}
\begin{enumerate}
    \item 
If $\phi$ is a measure ($\sigma$-measure, resp.), then $\phi$ is nonpathological ($\sigma$-nonpathological, resp.).\label{prop:properties-of-submeasures:measures:item-measure}\label{prop:properties-of-submeasures:measures:item-sigma-measure}

\item If $\phi$ is not lower semicontinuous, then 
$\phi$ is $\sigma$-pathological.\label{prop:properties-of-phi-hat:sigma-nopath-implies-lsc}

    \item 
If $\phi$ is a measure but not a $\sigma$-measure, then 
$\phi$ is nonpathological but $\sigma$-pathological.\label{prop:properties-of-phi-hat:measure-not-sigma-meaasure}  

\end{enumerate}

\item 
\begin{enumerate}

\item If $\phi$ is lower semicontinuous, then $\phi$ is  nonpathological $\iff$ $\phi$ is $\sigma$-nonpatho\-logical.\label{prop:properties-of-phi-hat:all-sets-for-lsc}
The assumption that $\phi$ is lower semicontinuous cannot be dropped (see Proposition~\ref{prop:properties-of-delta-submeasures}(\ref{prop:properties-of-delta-submeasures:delta-ideal-infinity})).

\item If 
$\phi$ is nonpathological but $\sigma$-pathological, 
then $\phi$ is not lower semicontinuous.\label{prop:properties-of-phi-hat:all-sets-for-lsc:not-lsc}

\item If $\phi$ is $\sigma$-nonpathological, then $\phi$ is lower semicontinuous.\label{prop:properties-of-phi-hat:all-sets-for-lsc:lsc}

\end{enumerate}

\item 
$\widehat{\phi}$ ($\widehat{\phi}_\sigma$, resp.)  is  the largest nonpathological ($\sigma$-nonpathological, resp.) submeasure dominated by $\phi$.\label{prop:properties-of-phi-hat:nonpathological}\label{prop:properties-of-phi-hat:nonpathological-for-sigma}

\end{enumerate}
\end{proposition}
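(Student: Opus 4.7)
The plan is to handle the eight items in the order given, using the defining suprema, the lsc characterization $\phi(A)=\sup\{\phi(F):F\subseteq A\text{ finite}\}$, and the ``extreme value'' Proposition~\ref{prop:phi-hat-vs-measure} as the main tools. Only the construction of concrete small measures at a few key points requires genuine work.

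Items (1)--(3) are routine. Subadditivity and monotonicity of $\widehat\phi,\widehat\phi_\sigma$ pass from measures to their pointwise suprema, the inequality $\widehat\phi_\sigma\leq\widehat\phi\leq\phi$ is built into the definitions, lsc of $\widehat\phi_\sigma$ follows from $\mu(A)=\sup_n\mu(A\cap n)$ holding for each $\sigma$-measure $\mu\leq\phi$, and (3) is immediate from $\phi\neq 0$. For (4a) the key step is to define $\mu(B)=\infty$ when $\phi(B)=\infty$ and $\mu(B)=0$ otherwise: subadditivity of $\phi$ forces additivity of $\mu$, because if $A,B$ are disjoint with $\phi(A\cup B)=\infty$ then $\phi(A)+\phi(B)\geq\infty$ gives at least one summand infinite, while if $\phi(A\cup B)<\infty$ monotonicity makes both finite. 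Then $\mu\leq\phi$ and $\mu(\omega)=\infty$. For (4b), subadditivity yields $x\in F$ with $c:=\phi(\{x\})>0$, and the point mass at $x$ of size $c$ is a nonzero $\sigma$-measure below $\phi$. For (4c), lsc-ness fails trivially from $\phi\neq 0$, while any $\sigma$-measure $\mu\leq\phi$ vanishes on singletons and hence identically by $\sigma$-additivity.

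For (5a), apply Proposition~\ref{prop:phi-hat-vs-measure} to get a measure $\mu\leq\phi$ with $\mu(F)=\widehat\phi(F)$; then $\nu(B):=\mu(B\cap F)$ is a $\sigma$-measure (finite support makes $\sigma$-additivity automatic), $\nu\leq\phi$, and $\nu(F)=\widehat\phi(F)$. Part (5b) is the same move globally: $\phi(\omega\setminus F)=0$ forces every measure $\mu\leq\phi$ to satisfy $\mu(B)=\mu(B\cap F)$, so $\mu$ is already supported on the finite set $F$ and is automatically a $\sigma$-measure; another application of Proposition~\ref{prop:phi-hat-vs-measure} then gives $\widehat\phi_\sigma(A)\geq\widehat\phi(A)$ for every $A$. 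Item (6) is formal: a ($\sigma$-)measure lies in its own defining family, so it coincides with its hat; a non-lsc $\phi$ cannot equal the lsc $\widehat\phi_\sigma$ from (2), giving (6b) and therefore (6c).

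The heart of the statement is (7a). One direction is (1); for the other, assume $\phi$ is lsc and $\phi=\widehat\phi$. For each finite $F$, part (5a) gives $\widehat\phi_\sigma(F)=\widehat\phi(F)=\phi(F)$, and then lsc of $\phi$ combined with monotonicity of $\widehat\phi_\sigma$ yields
\[
\phi(A)=\sup_{F\subseteq A\text{ finite}}\phi(F)=\sup_{F\subseteq A\text{ finite}}\widehat\phi_\sigma(F)\leq\widehat\phi_\sigma(A)\leq\phi(A).
\]
Parts (7b) and (7c) follow by contraposition from (2) and (6b). Finally, for (8) the crucial idempotence $\widehat{\widehat\phi}=\widehat\phi$ holds because every measure $\mu\leq\phi$ satisfies $\mu\leq\widehat\phi$ by the definition of $\widehat\phi$, so $\widehat{\widehat\phi}\geq\widehat\phi$ (the reverse inequality comes from (1)); maximality then follows since any nonpathological $\psi\leq\phi$ satisfies $\psi=\widehat\psi\leq\widehat\phi$, as every measure below $\psi$ is below $\phi$. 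The $\sigma$-version is identical. The only real obstacle I anticipate is verifying full additivity (not just subadditivity) of the $\{0,\infty\}$-valued construction in (4a), since that is the lone point at which a nontrivial measure must be produced from a pure submeasure hypothesis rather than being handed to us by context.
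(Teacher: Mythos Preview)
Your proof is correct and follows essentially the same approach as the paper's own proof, including the same key constructions (the $\{0,\infty\}$-valued measure in (4a), the point mass in (4b), the restriction $\nu(B)=\mu(B\cap F)$ in (5a), and the idempotence argument in (8)). The only cosmetic difference is in (5b): the paper reduces to (5a) by showing $\widehat\phi(A)=\widehat\phi(A\cap F)$ and $\widehat\phi_\sigma(A)=\widehat\phi_\sigma(A\cap F)$, whereas you argue directly that every measure $\mu\leq\phi$ is supported on $F$ and hence already a $\sigma$-measure---both are equally valid and equally short.
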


\begin{proof}
(\ref{prop:properties-of-phi-hat:submeasure})
Straightforward.

(\ref{prop:properties-of-phi-hat:sigma-hat-is-lsc})
Because $\widehat{\phi}_\sigma$ is the supremum of a family of lsc (sub)measures.

(\ref{prop:pathology-Topsoe:stronger-then-Farah})
Because we assumed that $\phi$ is nonzero.

(\ref{prop:pathology-Topsoe:infinite})
The function  
    $$\mu(A)=\begin{cases}
        0&\text{if $\phi(A)<\infty$},\\
        \infty & \text{if $\phi(A)=\infty$}
    \end{cases}$$
is a nonzero measure  which is dominated by $\phi$. Hence, $\widehat{\phi}\neq0$.

(\ref{prop:pathology-Topsoe:lsc})
    Let  $F$ be finite with $\phi(F)>0$.
    Since $\phi$ is subadditive, there is $n\in F$ with $\phi(\{n\})>0$.
Then  the function  
    $$\mu(A)=\begin{cases}        \phi(\{n\})&\text{if $n\in A$},\\
        0&\text{otherwise}
    \end{cases}$$
 is a nonzero $\sigma$-measure which is dominated by $\phi$. Hence $\widehat{\phi}_\sigma\neq 0$.


(\ref{prop:properties-of-submeasures:vanishes-on-FIN})
Straightforward.

 (\ref{prop:properties-of-phi-hat:finite-sets})
Since $\widehat{\phi}_\sigma\leq \widehat{\phi}$,  we only need to show 
$\widehat{\phi}(F)\geq \widehat{\phi}_\sigma(F)$ for finite $F$.
There is a measure $\nu$ on $\omega$ such that $\nu\leq\phi$ and $\nu(F)=\widehat{\phi}(F)$.
We define $\mu:\cP(\omega)\to[0,\infty]$ by $\mu(A) = \nu(A\cap F)$.
Then $\mu$ is a~$\sigma$-measure, $\mu\leq \phi$ and $\mu(F)=\widehat{\phi}(F)$, thus we
 obtain $\widehat{\phi}_\sigma(F)\geq \widehat{\phi}(F)$.

(\ref{prop:properties-of-phi-hat:cofinite-set})
Let $F$ be a finite set such that  $\phi(\omega\setminus F)=0$.
For any $A\subseteq\omega$ we have
$\widehat{\phi}(A\cap F)\leq \widehat{\phi}(A)
\leq 
\widehat{\phi}(A\cap F)+\widehat{\phi}(A\setminus F)
=\widehat{\phi}(A\cap F)
$,
so 
$\widehat{\phi}(A\cap F)=\widehat{\phi}(A)$.
The same argument shows that 
$\widehat{\phi}_\sigma(A\cap F)=\widehat{\phi}_\sigma(A)$ for every $A\subseteq\omega$.
Since $A\cap F$ is finite, we can use item~ (\ref{prop:properties-of-phi-hat:finite-sets})
 to obtain 
 $\widehat{\phi}_\sigma(A) = \widehat{\phi}(A)$ for every $A\subseteq\omega$.

(\ref{prop:properties-of-submeasures:measures:item-measure})
Obvious.

(\ref{prop:properties-of-phi-hat:sigma-nopath-implies-lsc})
It follows from item (\ref{prop:properties-of-phi-hat:sigma-hat-is-lsc}).

(\ref{prop:properties-of-phi-hat:measure-not-sigma-meaasure})
It follows from items 
(\ref{prop:properties-of-submeasures:measures:item-measure})--(\ref{prop:properties-of-phi-hat:sigma-nopath-implies-lsc})
and the fact that a lsc measure is a $\sigma$-measure.

(\ref{prop:properties-of-phi-hat:all-sets-for-lsc})
Since $\widehat{\phi}_\sigma\leq \widehat{\phi} = \phi$,  we only need to show 
$\widehat{\phi}_\sigma(A)\geq \phi(A)$ for any $A\subseteq\omega$.
Take any  $M<\widehat{\phi}(A)$.
Since $\phi$ is lsc, we can find a finite set $F\subseteq A$ with $\phi(F)>M$.
Using item  (\ref{prop:properties-of-phi-hat:finite-sets}), 
we obtain
$\phi(F) = \widehat{\phi}(F) = \widehat{\phi}_\sigma(F)$.
Then  $\widehat{\phi}_\sigma(A)\geq \widehat{\phi}_\sigma(F) = \phi(F)>M$.
Hence $\widehat{\phi}_\sigma(A)\geq \widehat{\phi}(A)$.

(\ref{prop:properties-of-phi-hat:all-sets-for-lsc:not-lsc})
It follows from item (\ref{prop:properties-of-phi-hat:all-sets-for-lsc}).

(\ref{prop:properties-of-phi-hat:all-sets-for-lsc:lsc})
It follows from item (\ref{prop:properties-of-phi-hat:sigma-hat-is-lsc}).

 (\ref{prop:properties-of-phi-hat:nonpathological})
The proof in the  case of $\widehat{\phi}_\sigma$ is almost the same as the one for $\widehat{\phi}$, so we provide only the proof for $\widehat{\phi}$.

First, we show that the submeasure $\widehat{\phi}$ is nonpathological.
Since $\widehat{\widehat{\phi}}\leq \widehat{\phi}$,  we only need to show 
$\widehat{\widehat{\phi}}(A)\geq \widehat{\phi}(A)$ for any $A\subseteq\omega$.
Take any  $M<\widehat{\phi}(A)$.
Then there exists a measure $\mu\leq \phi$ such that $\mu(A)>M$.
Since $\widehat{\phi}$ is the supremum of all measures dominated by $\phi$, we obtain $\mu\leq \widehat{\phi}$.
Consequently, 
$\widehat{\widehat{\phi}}(A) \geq \mu(A)>M$.
Since $M<\widehat{\phi}(A)$ was arbitrary, we obtain $\widehat{\widehat{\phi}}(A)\geq \widehat{\phi}(A)$.

Second, we show that $\widehat{\phi}$ is the largest nonpathological submeasure dominated by $\phi$. Take any nonpathological submeasure $\psi\leq \phi$.
We take any $A\subseteq\omega$ and any $M<\psi(A)$. If we show that $\widehat{\phi}(A)>M$, the proof will be finished.
Since $\psi$ is nonpathological, there is a measure $\mu\leq \psi$ with $\mu(A)>M$.
Since $\mu\leq \psi\leq \phi$, we obtain $\widehat{\phi}(A)\geq \mu(A)>M$.
\end{proof}


\section{Examples of nonpathological submeasures}
\label{sec:examples-of-nonpathol}

In Section~\ref{sec:Measures}, we defined the submeasures $\delta_\I$ and $\delta_\I^\infty$ for any ideal $\I$ on $X$. In the case of two simple ideals on $\omega$, we will use the following notations.
\begin{itemize}
    \item 
For $\I = \{\emptyset\}$, we write
    $\delta = \delta_\I$ and $\delta^\infty = \delta_\I^\infty$.
    \item 
For $\I = \{A\subseteq \omega:n\in A\}$ with  a fixed $n\in \omega$, we write 
    $\delta_n = \delta_\I$ and $\delta_n^\infty = \delta_\I^\infty$.
\end{itemize}

\begin{proposition} 
\label{prop:properties-of-delta-submeasures}
Let  $\I$ be an ideal on $\omega$ which contains all finite subsets of $\omega$.
\begin{enumerate}

\item \label{prop:properties-of-delta-submeasures:delta}\label{prop:properties-of-delta-submeasures:delta-n}
\begin{enumerate}
    \item $\delta$ is  a $\sigma$-nonpathological and lower semicontinuous submeasures which is not a measure.
    \item $\delta^\infty$, $\delta_n$ and $\delta_n^\infty$ are  $\sigma$-measures (hence lower semicontinuous and $\sigma$-nonpathological).\label{prop:properties-of-delta-submeasures:delta-n-sigma-measure}\label{prop:properties-of-delta-submeasures:delta-n-hat}

\end{enumerate}

\item \label{prop:properties-of-delta-submeasures:delta-ideal}
\begin{enumerate}
\item $\delta_\I$ is not  lower semicontinuous nor a $\sigma$-measure.\label{prop:properties-of-delta-submeasures:delta-ideal:submeasure}
\item $\delta_\I$ is a measure $\iff$ $\I$ is a maximal ideal.\label{prop:properties-of-delta-submeasures:delta-ideal:measure}
\item $0=\widehat{(\delta_\I)}_\sigma \neq \widehat{\delta_\I}=\delta_\I$.\label{prop:properties-of-delta-submeasures:delta-ideal:hat}
\end{enumerate}

\item\label{prop:properties-of-delta-submeasures:delta-plus-delta-I} 
\begin{enumerate}
\item $\delta+\delta_\I$ is a non-lsc submeasure.\label{prop:properties-of-delta-submeasures:delta-plus-delta-I:not-lsc} 
\item 
$\delta = \widehat{(\delta+\delta_\I)}_\sigma \neq \widehat{\delta+\delta_\I}=\delta+\delta_\I$.\label{prop:properties-of-delta-submeasures:delta-plus-delta-I:hat} 
\end{enumerate}

\item\label{prop:properties-of-delta-submeasures:delta-ideal-infinity} 
\begin{enumerate}
\item  $\delta_\I^\infty$ is a measure which is not  lower semicontinuous nor a $\sigma$-measure.\label{prop:properties-of-delta-submeasures:delta-ideal-infinity:submeasure}

\item $0=\widehat{(\delta_\I^\infty)}_\sigma \neq \widehat{\delta_\I^\infty}=\delta_\I^\infty$.\label{prop:properties-of-delta-submeasures:delta-ideal-infinity:hat}
\end{enumerate}

\item \label{prop:properties-of-delta-submeasures:delta+delta+measure}

\begin{enumerate}
\item  $\delta_\I^\infty+\delta_n$ is a measure which is not  lower semicontinuous nor a $\sigma$-measure.\label{prop:properties-of-delta-submeasures:delta+delta+measure:measure}

\item $0 \neq \widehat{(\delta_\I^\infty+\delta_n)}_\sigma \neq \widehat{\delta_\I^\infty+\delta_n}=\delta_\I^\infty+\delta_n$.\label{prop:properties-of-delta-submeasures:delta+delta+measure:hat}

\end{enumerate}

\item \label{prop:properties-of-delta-submeasures:delta+delta+not-measure}

\begin{enumerate}
\item  $\delta_\I+\delta_n$ is a submeasure which is not  lower semicontinuous.\label{prop:properties-of-delta-submeasures:delta+delta+not-measure:submeasure}

\item  $\delta_\I+\delta_n$ is a  measure $\iff$ $\I$ is maximal.\label{prop:properties-of-delta-submeasures:delta+delta+not-measure:measure}

\item $0 \neq \widehat{(\delta_\I+\delta_n)}_\sigma \neq \widehat{\delta_\I+\delta_n}=\delta_\I+\delta_n$.\label{prop:properties-of-delta-submeasures:delta+delta+not-measure:hat}

\end{enumerate}

\end{enumerate}
\end{proposition}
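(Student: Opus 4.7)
The plan is to work through the twelve subclaims by direct verification, relying on three recurring tools. First, the Zorn's Lemma fact that for every $A\notin\I$ one can extend $\I$ to a maximal ideal $\J\supseteq\I$ with $A\notin\J$; this works because $A\not\subseteq I$ for all $I\in\I$, so $\I\cup\{\omega\setminus A\}$ generates a proper ideal. Second, the observation that since $\I\supseteq\fin$, any $\sigma$-measure $\mu$ dominated by $\delta_\I$ or $\delta_\I^\infty$ must satisfy $\mu(\{n\})=0$ for every $n\in\omega$, hence $\mu\equiv 0$ by $\sigma$-additivity. Third, any $\sigma$-measure is lsc, so $\mu(A)=\sup\{\mu(F):F\text{ finite},\,F\subseteq A\}$.

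For item (1), each of $\delta^\infty$, $\delta_n$, $\delta_n^\infty$ is manifestly of the form $\mu_f$ and hence a $\sigma$-measure; $\delta$ is lsc because any nonempty $A$ contains some $m$ and then $\delta(A\cap k)=1$ for all $k>m$; its failure to be a measure is visible on two disjoint singletons; and $\sigma$-nonpathology is witnessed on each nonempty $A$ by the Dirac measure $\delta_m$ at any $m\in A$. For item (2), $\delta_\I$ is not lsc because finite truncations of $A\notin\I$ lie in $\I$; the equivalence of measure-ness with maximality of $\I$ is the standard dichotomy argument that a maximal ideal puts each $B\subseteq\omega$ on exactly one side; $\widehat{(\delta_\I)}_\sigma=0$ is immediate from the second tool; and $\widehat{\delta_\I}=\delta_\I$ follows from the Zorn extension tool, since for $A\notin\I$ the maximal ideal $\J\supseteq\I$ avoiding $A$ yields a measure $\delta_\J\leq\delta_\I$ with $\delta_\J(A)=1$. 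Item (4) is parallel: $\delta_\I^\infty$ is a measure by case analysis on whether two disjoint sets lie in $\I$; it is non-lsc for the same reason as $\delta_\I$; $\widehat{\delta_\I^\infty}=\delta_\I^\infty$ is trivial; and $\widehat{(\delta_\I^\infty)}_\sigma=0$ again follows from the second tool.

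Items (3), (5), (6) concern the sums $\delta+\delta_\I$, $\delta_\I^\infty+\delta_n$, and $\delta_\I+\delta_n$. Failure of lsc-ness is inherited from the non-lsc summand, and each equality $\widehat{\phi+\psi}=\phi+\psi$ is proved by combining a Zorn-built measure for the $\delta_\I$- or $\delta_\I^\infty$-summand with a Dirac (or trivial) measure for the other summand. For the $\widehat{\phantom{\phi}}_\sigma$-values, the nonzero lower bound always comes from a Dirac measure below the sum, while the strict upper bound separating $\widehat{\phantom{\phi}}_\sigma$ from $\widehat{\phantom{\phi}}$ comes from bounding $\mu(\{k\})$ using $\{k\}\in\I$ and applying the third tool to push $\mu(\omega)$ below the target value. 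In item (6b), the argument that $\delta_\I+\delta_n$ being a measure forces $\I$ to be maximal uses the elementary fact that if a sum of two submeasures is a measure and one summand is a measure, then so is the other (a direct consequence of finite additivity on disjoint pairs).

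The main obstacle is purely bookkeeping the twelve subcases without duplication, together with careful handling of $\infty$-valued additivity for $\delta^\infty$ and $\delta_\I^\infty$ and of the (subtle but essential) distinction between lsc-ness and $\sigma$-additivity when verifying that various measures fail to be $\sigma$-measures.
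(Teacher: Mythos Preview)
Your proposal is correct and follows essentially the same approach as the paper: the Zorn extension to a maximal ideal $\J\supseteq\I$ avoiding $A$ is exactly how the paper proves $\widehat{\delta_\I}=\delta_\I$, and your second tool is the content of the paper's Proposition~\ref{prop:properties-of-submeasures}(\ref{prop:properties-of-submeasures:vanishes-on-FIN}). The only cosmetic difference is that for the inequalities $\widehat{(\,\cdot\,)}_\sigma\neq\widehat{\,\cdot\,}$ in items (5b) and (6c) the paper argues abstractly that $\widehat{(\,\cdot\,)}_\sigma$ is always lsc while $\widehat{\,\cdot\,}$ equals the non-lsc original, whereas you propose a direct singleton bound; both arguments are short and equivalent in spirit.
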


\begin{proof}
(\ref{prop:properties-of-delta-submeasures:delta}),
(\ref{prop:properties-of-delta-submeasures:delta-ideal:submeasure}),
(\ref{prop:properties-of-delta-submeasures:delta-ideal:measure}) 
and 
(\ref{prop:properties-of-delta-submeasures:delta-ideal-infinity:submeasure})
Straightforward. 

(\ref{prop:properties-of-delta-submeasures:delta-ideal:hat})
The equality 
$\widehat{(\delta_\I)}_\sigma = 0$
follows from Proposition~\ref{prop:properties-of-submeasures}(\ref{prop:properties-of-submeasures:vanishes-on-FIN}).
Since $\widehat{\delta_\I}\leq \delta_\I$, we only need to show the converse inequality.
If $A\in \I$, then $\delta_\I(A)=0\leq \widehat{\delta_\I}(A)$, so we can assume that $A\notin \I$.
Then we can find a maximal ideal $\J$ such that $ A\notin \J$ and $\I\subseteq \J$. Since  $\delta_\J$ is a measure, $\delta_\J\leq \delta_\I$ and $\widehat{\delta_\I}(A)\geq \delta_\J(A)=1\geq \delta_\I(A)$, the proof is finished.

(\ref{prop:properties-of-delta-submeasures:delta-plus-delta-I:not-lsc})
Follows from 
$(\delta+\delta_\I)(\omega)=2 $
and
$(\delta+\delta_\I)(F)=1$ for every finite $F\subseteq \omega$.

(\ref{prop:properties-of-delta-submeasures:delta-plus-delta-I:hat})
Since $\delta$ and $\delta_\I$ are nonpathological, $\delta+\delta_\I$ is nonpathological as well.  
To show the first equality, we first observe that $\widehat{(\delta+\delta_\I)}_\sigma
\geq 
\widehat{\delta}_\sigma = \delta
$.
On the other hand, if $\mu$ is a measure such that $\mu\leq \delta+\delta_\I$, then $\mu(F)\leq (\delta+\delta_\I)(F) = 1$, so by using lsc of $\widehat{(\delta+\delta_\I)}_\sigma$ we obtain that  
$\widehat{(\delta+\delta_\I)}_\sigma(A)\leq 1 =\delta(A)$
for every nonempty $A$.

(\ref{prop:properties-of-delta-submeasures:delta-ideal-infinity:hat})
The equality 
$\widehat{(\delta_\I^\infty)}_\sigma = 0$
follows from Proposition~\ref{prop:properties-of-submeasures}(\ref{prop:properties-of-submeasures:vanishes-on-FIN}),
whereas the equality 
$\widehat{(\delta_\I^\infty)} = \delta_\I^\infty$
follows from Proposition~\ref{prop:properties-of-submeasures}(\ref{prop:properties-of-submeasures:measures:item-measure}).

(\ref{prop:properties-of-delta-submeasures:delta+delta+measure:measure})
It follows from items (\ref{prop:properties-of-delta-submeasures:delta-n-sigma-measure}) and (\ref{prop:properties-of-delta-submeasures:delta-ideal-infinity:submeasure}).

(\ref{prop:properties-of-delta-submeasures:delta+delta+measure:hat})
The first inequality follows from the fact that $\delta_n$ is a nonzero $\sigma$-measure.
The  equality follows from the fact that $\delta_\I^\infty+\delta_n$ is a measure. 
The second inequality follows from the fact that 
$\widehat{(\delta_\I^\infty+\delta_n)}_\sigma$ is lower semicontinuous (by Proposition~\ref{prop:properties-of-phi-hat}(\ref{prop:properties-of-phi-hat:sigma-hat-is-lsc})), whereas $\widehat{\delta_\I^\infty+\delta_n}= \delta_\I^\infty+\delta_n$ is not lower semicontinuous.

(\ref{prop:properties-of-delta-submeasures:delta+delta+not-measure:submeasure})
It follows from items (\ref{prop:properties-of-delta-submeasures:delta-n-sigma-measure}) and (\ref{prop:properties-of-delta-submeasures:delta-ideal:submeasure}).

(\ref{prop:properties-of-delta-submeasures:delta+delta+not-measure:measure})
It follows from items (\ref{prop:properties-of-delta-submeasures:delta-n-sigma-measure}) and (\ref{prop:properties-of-delta-submeasures:delta-ideal:measure}).

(\ref{prop:properties-of-delta-submeasures:delta+delta+not-measure:hat})
The first inequality follows from the fact that $\delta_n$ is a nonzero $\sigma$-measure.
The  equality follows from the fact that $\widehat{\delta_\I}=\delta_\I$ and $\delta_n$ is a measure. 
The second inequality follows from the fact that 
$\widehat{(\delta_\I+\delta_n)}_\sigma$ is lower semicontinuous (by Proposition~\ref{prop:properties-of-phi-hat}(\ref{prop:properties-of-phi-hat:sigma-hat-is-lsc})), whereas $\widehat{\delta_\I+\delta_n}= \delta_\I+\delta_n$ is not lower semicontinuous. 
\end{proof}

\begin{proposition}
\label{prop:limsup-of-nonpathol} 
If $\mu_n$ is a measure on $X$ for every $n\in \omega$, then the submeasure 
$$\phi(A) = \limsup_{n\to\infty}\mu_n(A)$$
is nonpathological.
\end{proposition}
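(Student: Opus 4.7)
The plan is to prove $\phi = \widehat{\phi}$ by constructing, for an arbitrary fixed $A \subseteq X$, a finitely additive measure $\nu$ dominated by $\phi$ that attains the value $\phi(A)$ at $A$. Since $\widehat{\phi} \leq \phi$ is immediate, exhibiting such a $\nu$ for every $A$ gives $\widehat{\phi}(A) \geq \nu(A) = \phi(A)$ and finishes the proof.

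The natural candidate is an \emph{ultrafilter limit} of the given sequence of measures, which the introduction has already recorded as a construction producing a measure: if $\cU$ is an ultrafilter on $\omega$, then $\nu(B) = \lim_{n\in\cU}\mu_n(B)$ is a measure on $X$. The task reduces to choosing $\cU$ so that $\lim_{n\in\cU}\mu_n(A) = \limsup_{n\to\infty}\mu_n(A) = \phi(A)$. For this I would split into two cases. If $\phi(A) < \infty$, then for each positive integer $k$ the set $S_k = \{n\in\omega : \mu_n(A) > \phi(A) - 1/k\}$ is infinite by definition of $\limsup$, and the family $\{S_k : k\geq 1\}$ is decreasing, hence has the finite intersection property; extend it to an ultrafilter $\cU$. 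If $\phi(A) = \infty$, use instead $S_k = \{n : \mu_n(A) > k\}$, each of which is infinite, and again extend $\{S_k : k\geq 1\}$ to an ultrafilter $\cU$.

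With $\cU$ in hand, I would verify two things. First, by construction $\lim_{n\in\cU}\mu_n(A) \geq \phi(A)$ (in the first case from $S_k\in\cU$ for every $k$, in the second case analogously). The reverse inequality $\lim_{n\in\cU}\mu_n(A) \leq \limsup_{n\to\infty}\mu_n(A)$ holds for \emph{any} ultrafilter on $\omega$, since an ultrafilter limit of a sequence of extended reals always lies between its $\liminf$ and $\limsup$. Hence $\nu(A) = \phi(A)$. Second, for every $B\subseteq X$ we have $\nu(B) = \lim_{n\in\cU}\mu_n(B) \leq \limsup_{n\to\infty}\mu_n(B) = \phi(B)$ by the same general fact, so $\nu \leq \phi$.

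The only subtlety worth pausing on is the handling of the value $\infty$ in the ultrafilter limit: one must interpret $\lim_{n\in\cU}$ in $[0,\infty]$ so that $\{n:\mu_n(B) > k\}\in\cU$ for all $k$ forces the limit to be $\infty$, which is exactly what is needed to make the ultrafilter-limit construction work and to handle the case $\phi(A)=\infty$. Beyond that, the argument is routine, and I expect no real obstacle — the existence of ultrafilters refining a descending chain of infinite sets is just the usual application of the ultrafilter lemma.
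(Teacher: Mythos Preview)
Your proposal is correct and follows essentially the same approach as the paper: both arguments fix $A$, choose an ultrafilter $\cU$ on $\omega$ so that $\lim_{n\in\cU}\mu_n(A)=\phi(A)$, and then use the ultrafilter-limit measure $\nu(B)=\lim_{n\in\cU}\mu_n(B)$ (already noted in the preliminaries as a measure) as a witness that $\widehat{\phi}(A)\geq\phi(A)$. The only cosmetic difference is that the paper picks a subsequence $k_0<k_1<\cdots$ with $\mu_{k_n}(A)\to\phi(A)$ in $[0,\infty]$ and takes $\cU$ containing $\{k_n:n\in\omega\}$, whereas you split into cases and extend a descending chain $\{S_k\}$; both devices accomplish the same thing.
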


\begin{proof}
Take any $A\subseteq X$. Then there exists an increasing sequence $k_0<k_1<\dots$ such that $\phi(A) = \lim_{n\to\infty}\mu_{k_n}(A)$.
Let $\cU$ be an ultrafilter on $\omega$ such that $\{k_n:n\in \omega\}\in \cU$.
Then the measure 
$$\nu(A) = \lim_{n\in \cU}\mu_n(A)$$
is dominated by $\phi$ and $\mu(A)=\phi(A)$. Hence $\widehat{\phi}(A)=\phi(A)$.
\end{proof}

For a function $f:\omega\to[0,\infty)$ such that 
$f(0)\neq 0$, 
$$ \sum_{i\in \omega} f(i) =\infty
\text{\ \ and \ \ } 
\lim_{n\to\infty} \frac{ f(n)}{\sum_{i\in n} f(i)}=0,$$
and for a fixed  $n\in \omega\setminus\{0\}$,  we define \label{def:Erdos-Ulam-submeasure}
a $\sigma$-measure
 $\phi_{f,n}$ on $\omega$ by 
$$
\phi_{f,n}(A)=
\frac{\sum_{i\in A\cap n} f(i)}{\sum_{i\in n} f(i)},$$
 and
 two submeasures
 $\phi_f, \overline{\phi}_f$  on $\omega$ by 
$$
\phi_f(A) = \sup\left\{\phi_{f,n}(A):n\in \omega\right\}
\text{\ \ \ and \ \ \ }
\overline{\phi}_f(A) = \limsup_{n\to\infty}\phi_{f,n}(A).
$$
For a constant function $f=1$ we obtain the \emph{asymptotic density}         
        $$\overline{d}(A) = \limsup_{n\to\infty}\frac{|A\cap n|}{n}.$$

\begin{proposition}
\label{prop:properties-of-asymptotic-densities}
The submeasure $\phi_f$ is $\sigma$-nonpathological and lower semicontinuous, whereas 
 $\overline{\phi}_f$ is nonpathological, but it is not lower semicontinuous.
\end{proposition}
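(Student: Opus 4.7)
The plan is to split the statement into four short arguments, treating $\phi_f$ and $\overline{\phi}_f$ separately.

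First I would handle $\phi_f$. Each $\phi_{f,n}$ is a $\sigma$-measure on $\omega$, hence lower semicontinuous, so $\phi_f=\sup_{n}\phi_{f,n}$ is lower semicontinuous by the general fact (stated at the end of the Preliminaries) that a supremum of lsc submeasures is lsc. For $\sigma$-nonpathology, the inequality $\widehat{(\phi_f)}_\sigma\leq\phi_f$ holds in general by Proposition~\ref{prop:properties-of-phi-hat}(\ref{prop:properties-of-phi-hat:submeasure}); conversely each $\phi_{f,n}$ is itself a $\sigma$-measure dominated by $\phi_f$, so by definition $\phi_{f,n}\leq\widehat{(\phi_f)}_\sigma$, and taking the supremum over $n$ gives $\phi_f\leq\widehat{(\phi_f)}_\sigma$.

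Next I would treat $\overline{\phi}_f$. Nonpathology is immediate from Proposition~\ref{prop:limsup-of-nonpathol}, since each $\phi_{f,n}$ is a measure and $\overline{\phi}_f=\limsup_{n\to\infty}\phi_{f,n}$.

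The remaining step is to exhibit a failure of lower semicontinuity. I would compare the value on $\omega$ with the values on finite sets. Since $\phi_{f,n}(\omega)=1$ for every $n\geq 1$, we get $\overline{\phi}_f(\omega)=1$. On the other hand, for any finite $F\subseteq\omega$, choose $n_0$ with $F\subseteq n_0$; for $n\geq n_0$,
\[
\phi_{f,n}(F)=\frac{\sum_{i\in F}f(i)}{\sum_{i\in n}f(i)}\xrightarrow[n\to\infty]{}0
\]
because $\sum_{i\in F}f(i)$ is finite while $\sum_{i\in n}f(i)\to\infty$ by hypothesis on $f$. Hence $\overline{\phi}_f(F)=0$ for every finite $F$, so $\sup\{\overline{\phi}_f(F):F\text{ finite}\}=0\neq 1=\overline{\phi}_f(\omega)$, witnessing that $\overline{\phi}_f$ is not lower semicontinuous.

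There is no serious obstacle here; the only point that needs care is invoking $\sum f(i)=\infty$ to kill the densities on finite sets, which is precisely why the definition of $\phi_{f,n}$ required that assumption.
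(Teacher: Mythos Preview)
Your proof is correct and follows the same approach as the paper: the nonpathology of $\overline{\phi}_f$ is obtained from Proposition~\ref{prop:limsup-of-nonpathol}, while the remaining properties are handled by the elementary arguments you spell out (the paper merely declares them ``straightforward''). Your verification that $\overline{\phi}_f$ is not lsc by showing it vanishes on finite sets while $\overline{\phi}_f(\omega)=1$ is exactly the intended computation.
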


\begin{proof}
The nonpathology of $\overline{\phi}_f$ follows from Proposition~\ref{prop:limsup-of-nonpathol} and the rest of properties are straightforward. 
\end{proof}


\section{Examples of pathological submeasures}
\label{sec:examples-of-pathol}

We start with a well known example of a pathological submeasures on a three-element set. This submeasure can be considered as a  blueprint for an example from Proposition~\ref{prop:pathological-submeasure-tau}.

\begin{proposition}[Folklore]
\label{prop:pathological-submeasure-on-3}
The submeasure  $\tau:\{0,1,2\}\to[0,2]$ given by 
    $$\tau(A)=
    \begin{cases}
    0&\text{if $|A|=0$,}\\    
    1&\text{if $1\leq |A|\leq 2$,}\\
    2&\text{if $|A|=3$}
    \end{cases}$$
is pathological and   $0\neq \widehat{\tau}_\sigma=  \widehat{\tau}\neq\tau$.
Moreover, 
    $$\widehat{\tau}(A)=
    \begin{cases}
    0&\text{if $\tau(A)=0$,}\\    
    1&\text{if $\tau(A)=1$,}\\
    3/2&\text{if $\tau(A)=2$.}
    \end{cases}$$
\end{proposition}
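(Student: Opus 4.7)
The plan is straightforward linear programming on a three-point space. First I would observe that on the finite set $\{0,1,2\}$ every measure is automatically countably additive (countable additivity collapses to finite additivity), so every measure is a $\sigma$-measure. This immediately gives $\widehat{\tau}_\sigma = \widehat{\tau}$, handling one of the equalities for free, and also forces $\widehat{\tau}_\sigma \neq 0$ via Proposition~\ref{prop:properties-of-submeasures}(\ref{prop:pathology-Topsoe:lsc}) (since $\tau$ is nonzero on singletons).

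Next I would parametrize any measure $\mu$ on $\{0,1,2\}$ by the triple $(a,b,c) = (\mu(\{0\}), \mu(\{1\}), \mu(\{2\})) \in [0,\infty)^3$. The condition $\mu \leq \tau$ is equivalent to the system
\[
a,b,c \leq 1, \quad a+b \leq 1, \quad a+c \leq 1, \quad b+c \leq 1, \quad a+b+c \leq 2,
\]
of which the singleton and whole-set constraints are redundant given the pair constraints. Computing $\widehat{\tau}(A)$ then amounts to maximizing a linear functional over this polytope. For $A$ a singleton $\{i\}$, taking $\mu$ to be the point mass at $i$ achieves $\widehat{\tau}(\{i\}) = 1 = \tau(\{i\})$; for $A$ a two-element set, the constraint $\mu(A) \leq \tau(A) = 1$ is saturated by the same point mass, giving $\widehat{\tau}(A) = 1 = \tau(A)$.

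The key computation is on $A = \{0,1,2\}$. Summing the three pair constraints yields $2(a+b+c) \leq 3$, hence $\mu(\{0,1,2\}) = a+b+c \leq 3/2$. The uniform choice $a=b=c=1/2$ satisfies all constraints and attains this bound, so $\widehat{\tau}(\{0,1,2\}) = 3/2$. Since $3/2 < 2 = \tau(\{0,1,2\})$, we conclude $\widehat{\tau} \neq \tau$, i.e.\ $\tau$ is pathological, and the moreover clause follows by combining the three cases above.

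The "hard part" is almost nonexistent here — it is just the observation that the three pairwise inequalities combine to force $a+b+c \leq 3/2$ rather than $\leq 2$. The only place one has to be slightly careful is verifying that the optimal $(1/2,1/2,1/2)$ indeed satisfies \emph{all} constraints (including singleton constraints), which it does trivially.
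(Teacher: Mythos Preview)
Your proposal is correct and follows essentially the same approach as the paper: both use the finiteness of the underlying space to identify $\widehat{\tau}_\sigma$ with $\widehat{\tau}$, both sum the three pairwise constraints $\mu(\{i,j\})\leq 1$ to obtain the upper bound $\widehat{\tau}(\{0,1,2\})\leq 3/2$, and both exhibit the uniform measure $\mu(\{i\})=1/2$ to show this bound is attained. Your treatment is slightly more explicit in handling the singleton and two-element cases (which the paper leaves implicit), but the argument is otherwise identical.
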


\begin{proof}
Using Proposition~\ref{prop:properties-of-phi-hat}(\ref{prop:properties-of-phi-hat:finite-sets}), we obtain  $\widehat{\tau}_\sigma=  \widehat{\tau}$.
    Since $\delta_{0}$ is a nonzero $\sigma$-measure (Proposition~\ref{prop:properties-of-delta-submeasures}(\ref{prop:properties-of-delta-submeasures:delta-n-sigma-measure}))  and $\delta_0\leq \tau$, we obtain $\widehat{\tau}_\sigma\neq 0$.
Finally we  show that  $\widehat{\tau}\neq \tau$ and $\widehat{\tau}(\{0,1,2\})=3/2$.

Let $\mu\leq \tau$ be a measure on $\{0,1,2\}$.
Then  
$\mu(\{0\}) +\ \mu(\{1\})\leq \tau(\{0,1\})=1$,
$\mu(\{0\}) +\ \mu(\{2\})\leq \tau(\{0,1\})=1$
and
$\mu(\{1\}) +\ \mu(\{2\})\leq \tau(\{0,1\})=1$,
hence we obtain 
$2\mu(\{0,1,2\}) \leq 3$.
Thus $\mu(\{0,1,2\})\leq 3/2$, so $\widehat{\tau}\neq \tau$ and $\widehat{\tau}(\{0,1,2\})\leq 3/2$.

Let $\mu$ be a measure on $\{0,1,2\}$ such that $\mu(\{0\})=\mu(\{1\})=\mu(\{2\}) =1/2$. Then $\mu \leq \tau$ and $\mu(\{1,2,3\})=3/2$, so $\widehat{\tau}(\{0,1,2\})\geq 3/2$.
\end{proof}

\begin{proposition}
\label{prop:tau-3-infty}
\label{prop:pathological-submeasure-tau}
Let $\tau$ be a submeasure from Proposition~\ref{prop:pathological-submeasure-on-3}.
Let $A_0,A_1,A_2\subseteq \omega$ be infinite and pairwise disjoint. 

\begin{enumerate}
    \item 
The submeasure $\tau_{3}:\cP(\omega)\to[0,2]$ given by
$$\tau_3(A) = \tau(A\cap \{0,1,2\})$$
is pathological, lower semicontinuous 
and $0 \neq \widehat{(\tau_3)}_\sigma=\widehat{\tau_3}\neq\tau_3$.
Moreover, \label{prop:pathological-submeasure-tau:formula-for-hat}
    $$\widehat{\tau_3}(A)=
    \begin{cases}
    0&\text{if $\tau_3(A)=0$,}\\    
    1&\text{if $\tau_3(A)=1$,}\\
    3/2&\text{if $\tau_3(A)=2$.}
    \end{cases}$$\item 
The submeasure $\tau_3^\infty:\cP(\omega)\to[0,2]$ given by
    $$\tau_3^\infty(A)=
    \begin{cases}
    0&\text{if $|\{i<3:|A\cap A_i|=\omega\}|=0$,}\\    
    1&\text{if $1\leq |\{i<3:|A\cap A_i|=\omega\}|\leq 2$,}\\
    2&\text{if $|\{i<3:|A\cap A_i|=\omega\}|=3$}
    \end{cases}$$
is pathological, not lower semicontinuous  and 
$0 = \widehat{(\tau_3^\infty)}_\sigma\neq\widehat{\tau_3^\infty}\neq \tau_3^\infty$.\label{prop:tau-3-infty:formula-for-hat}
Moreover, 
    $$\widehat{\tau_3^\infty}(A)=
    \begin{cases}
    0&\text{if $\tau^\infty_3(A)=0$,}\\    
    1&\text{if $\tau^\infty_3(A)=1$,}\\
    3/2&\text{if $\tau^\infty_3(A)=2$.}
    \end{cases}$$
\end{enumerate}
\end{proposition}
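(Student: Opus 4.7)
For part (1), the key observation is that $\tau_3$ is the trivial extension of $\tau$ to $\omega$: we have $\tau_3(A) = \tau_3(A \cap \{0,1,2\})$ and $\tau_3(B) = 0$ whenever $B \cap \{0,1,2\} = \emptyset$. Lower semicontinuity is immediate because $A \cap \{0,1,2\}$ is itself a finite subset of $A$ realizing the supremum. Any measure $\mu \leq \tau_3$ satisfies $\mu(\omega \setminus \{0,1,2\}) = 0$, so $\mu$ is concentrated on $\{0,1,2\}$ and its restriction there is a measure dominated by $\tau$; conversely any such measure on $\{0,1,2\}$ extends by zero to a measure on $\omega$ dominated by $\tau_3$. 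This identifies $\widehat{\tau_3}(A) = \widehat{\tau}(A \cap \{0,1,2\})$, and the explicit formula is read off from Proposition~\ref{prop:pathological-submeasure-on-3}. The equality $\widehat{(\tau_3)}_\sigma = \widehat{\tau_3}$ follows from Proposition~\ref{prop:properties-of-phi-hat}(\ref{prop:properties-of-phi-hat:cofinite-set}) applied with $F = \{0,1,2\}$, and pathology is witnessed by $\widehat{\tau_3}(\{0,1,2\}) = 3/2 \neq 2 = \tau_3(\{0,1,2\})$.

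For part (2), the easy assertions come first: since $F \cap A_i$ is finite for every finite $F \subseteq \omega$, we have $\tau_3^\infty(F) = 0$ while $\tau_3^\infty(\omega) = 2$, so $\tau_3^\infty$ is not lower semicontinuous, and Proposition~\ref{prop:properties-of-submeasures}(\ref{prop:properties-of-submeasures:vanishes-on-FIN}) gives $\widehat{(\tau_3^\infty)}_\sigma = 0$. To compute $\widehat{\tau_3^\infty}$, split on $k(A) := |\{i<3 : |A \cap A_i| = \omega\}|$. The cases $k(A) \leq 2$ give $\widehat{\tau_3^\infty}(A) \in \{0, 1\}$ without difficulty: the upper bound is immediate from $\widehat{\tau_3^\infty} \leq \tau_3^\infty$, and when $k(A) \geq 1$ a single free ultrafilter on any relevant $A_i$ containing $A \cap A_i$ produces a $\{0,1\}$-valued measure with value $1$ at $A$ that is easily seen to be dominated by $\tau_3^\infty$.

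The main technical step is the case $k(A) = 3$, which mirrors the computation in Proposition~\ref{prop:pathological-submeasure-on-3}. For the upper bound, any measure $\mu \leq \tau_3^\infty$ vanishes on $\omega \setminus (A_0 \cup A_1 \cup A_2)$ (since $\tau_3^\infty$ does there), so $\mu(A) \leq \sum_{i<3} \mu(A_i)$; the three pair constraints $\mu(A_i) + \mu(A_j) = \mu(A_i \cup A_j) \leq \tau_3^\infty(A_i \cup A_j) = 1$ for $i \neq j$ add to $\sum_{i<3} \mu(A_i) \leq 3/2$. The matching lower bound is realized by the convex combination $\mu = (1/2)(\nu_0 + \nu_1 + \nu_2)$, where for each $i<3$ we pick a free ultrafilter $\cU_i$ on $A_i$ containing $A \cap A_i$ and set $\nu_i(B) = 1$ if $B \cap A_i \in \cU_i$ and $\nu_i(B) = 0$ otherwise. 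Then $\mu(A) = 3/2$ by construction; since $\cU_i$ is free on $A_i$, $\nu_i(B) = 1$ forces $|B \cap A_i| = \omega$, hence $\nu_0(B) + \nu_1(B) + \nu_2(B) \leq k(B)$, which yields $\mu(B) \leq k(B)/2 \leq \tau_3^\infty(B)$ in each of the four cases $k(B) \in \{0,1,2,3\}$. Pathology follows from $\widehat{\tau_3^\infty}(\omega) = 3/2 \neq 2 = \tau_3^\infty(\omega)$. The main obstacle is guessing the right convex combination of ultrafilter-based measures to realize exactly the value $3/2$; everything else reduces to bookkeeping that parallels Proposition~\ref{prop:pathological-submeasure-on-3}.
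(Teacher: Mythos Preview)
Your proposal is correct and follows essentially the same route as the paper. The only cosmetic difference is that you phrase the dominating measures via free ultrafilters $\cU_i$ on $A_i$, whereas the paper uses the dual language of maximal ideals $\I_i$ and the measures $\delta_{\I_i}$; these are the same objects, and your case-by-case verification that $\mu(B)\le k(B)/2\le \tau_3^\infty(B)$ is in fact slightly more explicit than the paper's ``repeating the argument from the previous paragraph''.
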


\begin{proof}
Obviously  $\tau_3$ is lower semicontinuous while $\tau_3^\infty$ is not lower semicontinuous because it vanishes on finite sets (see Proposition~\ref{prop:properties-of-submeasures}(\ref{prop:properties-of-submeasures:vanishes-on-FIN})).

To show that $\widehat{\tau_3}\neq\tau_3$, $\widehat{\tau_3^\infty}\neq\tau_3^\infty$ and obtain the values of $\widehat{\tau_3}$, it is enough to 
imitate  the proof of Proposition~\ref{prop:pathological-submeasure-on-3}, so we omit the details.

Since $\tau_3^\infty$ vanishes on finite sets, we obtain 
$\widehat{(\tau_3^\infty)}_\sigma=0$ by Proposition~\ref{prop:properties-of-submeasures}(\ref{prop:properties-of-submeasures:vanishes-on-FIN}), whereas by Proposition~\ref{prop:pathology-Topsoe}(\ref{prop:pathology-Topsoe:lsc}), we obtain that 
$\widehat{(\tau_3)}_\sigma\neq 0$.

Since $\tau_3(\omega\setminus \{0,1,2\})=0$, we can use Proposition~\ref{prop:properties-of-phi-hat}(\ref{prop:properties-of-phi-hat:cofinite-set}) to obtain $\widehat{(\tau_3)}_\sigma=\widehat{\tau_3}$.

Finally we calculate the value $\widehat{\tau^\infty_3}(A)$ for every set $A$.

If $\tau^\infty_3(A)=0$, then $\widehat{\tau^\infty_3}(A)\leq \tau^\infty_3(A)=0$, so we are done.

If $\tau^\infty_3(A)=1$, then there is $i<3$ such that $A\cap A_i$ is infinite.
Let $\I$ be a maximal ideal on $\omega$ which contains all finite sets and  $A\cap A_i\notin \I$.
Then $\delta_\I$ is a nonzero measure (by Proposition~\ref{prop:properties-of-delta-submeasures}(\ref{prop:properties-of-delta-submeasures:delta-ideal:measure})).
We show that $\delta_\I\leq \tau_3^\infty$.
If $B\in \I$, then $\delta_\I(B)=0\leq \tau_3^\infty(B)$. Assume that $B\notin \I$.
Since $\I$ is maximal, $B\cap A_i\notin\I$. Consequently, $A_i\cap B$ is infinite, so $\tau_3^\infty(B)\geq 1=\delta_\I(B)$.
Thus $1\leq \widehat{\tau_3^\infty}(A)\leq \tau_3^\infty(A)=1$.

If $\tau^\infty_3(A)=2$, we first observe that the inequality 
$\widehat{\tau^\infty_3}(A)\leq 3/2$ can be shown as in the proof of Proposition~\ref{prop:pathological-submeasure-on-3}.
To show the reverse inequality, we take three maximal ideals $\I_0,\I_1$ and $\I_2$ on $\omega$ which contain all finite sets and  $A\cap A_i\notin \I_i$ for $i=0,1,2$. 
Then we define a measure 
$\mu=(\delta_{\I_0}+\delta_{\I_1}+\delta_{\I_2})/2$ 
and repeating the argument from the previous paragraph, we show that $\mu\leq \tau_3^\infty$. Since $\mu(A)=3/2$, the proof is finished.
\end{proof}

The following theorem shows that there are $\varepsilon$-pathological submeasures on finite sets for arbitrarily small $\varepsilon$. Then using this theorem we can  construct three types of  pathological submeasures on $\omega$ (see Proposition~\ref{prop:pathological-lsc-submeasures}).

\begin{theorem}[{\cite[Theorem~1]{MR0412369}}]
\label{thm:herer-christensen:pathology-on-finite-set}
    Let $\varepsilon > 0$ be an arbitrary positive number. 
    There exist a finite set $X$ and a submeasure $\phi$ on $X$ such that $\phi(X)=1$ and 
    $\mu(X) \leq \varepsilon$
    for  any measure $\mu\leq \phi$. 
\end{theorem}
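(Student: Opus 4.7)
The plan is to produce an explicit finite combinatorial example. Fix $n\in\omega$ (to be chosen large depending on $\varepsilon$), set $k=\lfloor n/2\rfloor$, take $X$ to be the family of all $k$-element subsets of $\{1,\dots,n\}$, and for each $i\in\{1,\dots,n\}$ let $A_i=\{K\in X:i\notin K\}$. The two combinatorial features that drive everything are: (i) each $K\in X$ belongs to exactly $n-k$ of the sets $A_i$, and (ii) the minimum number of the $A_i$'s needed to cover $X$ is exactly $k+1$. The candidate submeasure is
$$\phi(B)=\frac{1}{k+1}\min\left\{|I|:I\subseteq\{1,\dots,n\},\ B\subseteq\bigcup_{i\in I}A_i\right\}$$
for nonempty $B\subseteq X$, with $\phi(\emptyset)=0$.

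First I would verify that $\phi$ is a submeasure with $\phi(X)=1$. Monotonicity and subadditivity follow at once from $|I_1\cup I_2|\leq|I_1|+|I_2|$. The equality $\phi(X)=1$ is precisely assertion (ii): any $(k+1)$-element set $I$ covers $X$ since no $k$-element subset $K$ can contain $I$; conversely, if $|I|\leq k$, one can pick a $k$-element $K\in X$ with $I\subseteq K$, and such $K$ lies in no $A_i$ with $i\in I$.

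Second, I would bound an arbitrary measure $\mu\leq\phi$. Since $\{i\}$ covers $A_i$, we have $\mu(A_i)\leq\phi(A_i)\leq 1/(k+1)$. Combined with (i), finite additivity of $\mu$ yields the double-counting estimate
$$(n-k)\,\mu(X)=\sum_{i=1}^n\mu(A_i)\leq\frac{n}{k+1},$$
so $\mu(X)\leq n/((k+1)(n-k))$, which for $k=\lfloor n/2\rfloor$ is at most $4/n$ in both parities of $n$. Thus taking any $n>4/\varepsilon$ at the outset forces $\mu(X)\leq\varepsilon$.

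The one substantive step is choosing the right combinatorial scaffold; once $X$ and the family $\{A_i\}$ are in place, the three verifications (submeasure axioms, $\phi(X)=1$, and the bound on $\widehat{\phi}(X)$) reduce to short pigeonhole and double-counting arguments. I expect the main subtlety to be the covering-number computation in (ii), which rests on the simple observation that a $k$-subset cannot contain a $(k+1)$-subset. As a bonus, the gap between $\phi(X)=1$ and $\widehat{\phi}(X)\leq 4/n$ grows unboundedly with $n$, so the same construction witnesses pathology of any prescribed degree.
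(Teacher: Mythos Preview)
Your argument is correct. The paper does not supply its own proof of this theorem; it merely cites the original source \cite[Theorem~1]{MR0412369} and uses the result as a black box (e.g.\ in Proposition~\ref{prop:pathological-lsc-submeasures}). Your construction is in fact the standard one: the same covering scheme appears, in slightly different packaging, in Mazur's \cite[Lemma~1.8]{mazur}, which the paper invokes later in Proposition~\ref{prop:pathologica-submeasure-with-infinite-degree-of-pathology-but-with-nonpathological-ideal}. So there is nothing to compare against here---you have filled in a proof the paper omits, and done so with the classical example.

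One very minor remark: in the double-counting step you silently use that on a finite set every finitely additive measure satisfies $\mu(B)=\sum_{x\in B}\mu(\{x\})$, so that $\sum_i\mu(A_i)=\sum_{K\in X}\mu(\{K\})\cdot|\{i:K\in A_i\}|$. This is immediate, but since the paper's notion of ``measure'' is finitely additive on $\cP(X)$ with values in $[0,\infty]$, it may be worth one sentence to note that $\mu(X)\leq\phi(X)=1<\infty$ forces all values finite, after which the pointwise decomposition is automatic.
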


\begin{proposition}
\label{prop:pathological-lsc-submeasures}
Using Theorem~\ref{thm:herer-christensen:pathology-on-finite-set}, we construct 
 a partition  $\{I_n:n<\omega\}$  of $\omega$ into finite intervals and submeasures $\phi_n$ on $I_n$ such that $\phi_n(I_n)=1$ and 
 $\mu(I_n)\leq 1/2^{n+1}$ for every measure $\mu_n\leq \phi_n$ and each $n<\omega$.
    \begin{enumerate}
    \item 
The  lower semicontinuous submeasure  $\psi_0$ on $\omega$ given by 
$$\psi_0(A)=\phi_0(A\cap I_0)$$
is pathological and     
    $0\neq \widehat{(\psi_0)}_\sigma  = \widehat{\psi_0} \neq \psi_0$.

        \item 
The  lower semicontinuous submeasure  $\psi_1$ on $\omega$ given by 
$$\psi_1(A)  =  \sup\{\phi_n(A\cap I_n):n<\omega\}$$
is pathological.

    \item 
The  lower semicontinuous submeasure  $\psi_2$ on $\omega$ given by 
$$\psi_2(A) =  \sum_{n<\omega}\phi_n(A\cap I_n)$$ 
is pathological,  
    $0\neq \widehat{(\psi_2)}_\sigma  \neq  \widehat{\psi_2} \neq \psi_2$ 
    and $\widehat{(\psi_2)}_\sigma(\omega)<\infty=\widehat{\psi_2}(\omega)$.

            \item 
The  lower semicontinuous submeasure  $\psi_3$ on $\omega$ given by 
$$\psi_3(A)  =  \sup\{(n+1)\phi_n(A\cap I_n):n<\omega\}$$
is pathological,
    $0\neq \widehat{(\psi_3)}_\sigma  \neq  \widehat{\psi_3} \neq \psi_3$, 
$\widehat{(\psi_3)}_\sigma(\omega)<\infty=\widehat{\psi_3}(\omega)$
and 
$\widehat{\psi_3}$ is \emph{not} lower semicontinuous.\label{prop:pathological-lsc-submeasures:item-non-lsc-hat} 

\end{enumerate}
\end{proposition}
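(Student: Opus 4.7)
The plan is to apply Theorem~\ref{thm:herer-christensen:pathology-on-finite-set} with $\varepsilon=1/2^{n+1}$ for each $n<\omega$ to produce finite submeasure spaces of total mass $1$ whose dominated measures have mass at most $1/2^{n+1}$; identifying them with consecutive intervals $I_n$ yields the partition and the $\phi_n$. Each $\phi_n$ is trivially lsc (its domain is finite), so lower semicontinuity of $\psi_0,\psi_1,\psi_2,\psi_3$ will follow from the elementary fact that suprema and countable sums of lsc submeasures are lsc. Moreover, since each $\psi_i$ is lsc and nonzero, Proposition~\ref{prop:pathology-Topsoe}(\ref{prop:pathology-Topsoe:lsc}) handles $\widehat{(\psi_i)}_\sigma\neq 0$ in all four items simultaneously.

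The unifying observation I would use is that for $A\subseteq I_n$ the restriction of $\psi_i$ to $\cP(I_n)$ equals $\phi_n$ (respectively $(n+1)\phi_n$ in the case of $\psi_3$). Thus any measure $\mu\leq\psi_i$ restricts to a measure on $I_n$ dominated by $\phi_n$, forcing $\mu(I_n)\leq 1/2^{n+1}$, respectively $\mu(I_n)\leq(n+1)/2^{n+1}$. Already the case $n=0$ yields $\widehat{\psi_i}(I_0)<\psi_i(I_0)=1$, proving pathology of all four submeasures. For item (1), $\psi_0$ vanishes off the finite set $I_0$, so Proposition~\ref{prop:properties-of-phi-hat}(\ref{prop:properties-of-phi-hat:cofinite-set}) yields $\widehat{(\psi_0)}_\sigma=\widehat{\psi_0}$, completing (1), while (2) is already finished.

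For items (3) and (4) I would exploit the gap between finitely additive measures (where mass can escape ``to infinity'') and $\sigma$-measures. Since $\psi_2(\omega)=\psi_3(\omega)=\infty$, Proposition~\ref{prop:pathology-Topsoe}(\ref{prop:pathology-Topsoe:infinite}) yields $\widehat{\psi_i}(\omega)=\infty$; on the other hand, any $\sigma$-measure $\mu\leq\psi_i$ is countably additive with $\mu(\omega)=\sum_n\mu(I_n)$, bounded by $\sum_n 1/2^{n+1}=1$ in case (3) and by the finite constant $C:=\sum_n(n+1)/2^{n+1}$ in case (4). This separates $\widehat{(\psi_i)}_\sigma(\omega)$ from $\widehat{\psi_i}(\omega)$ in both cases.

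The hard part is the final claim of (4), namely that $\widehat{\psi_3}$ is \emph{not} lsc. My approach is to bound $\widehat{\psi_3}$ uniformly on all finite sets: if $\mu\leq\psi_3$ is any measure and $F\subseteq I_0\cup\dots\cup I_N$, then finite additivity gives $\mu(F)=\sum_{n\leq N}\mu(F\cap I_n)\leq\sum_{n\leq N}(n+1)/2^{n+1}\leq C$, so $\widehat{\psi_3}(F)\leq C$ for every finite $F$. Were $\widehat{\psi_3}$ lower semicontinuous, passing to the supremum over finite $F$ would force $\widehat{\psi_3}(\omega)\leq C<\infty$, contradicting $\widehat{\psi_3}(\omega)=\infty$. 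The geometric decay supplied by Theorem~\ref{thm:herer-christensen:pathology-on-finite-set} is precisely what keeps this bound finite even after multiplying by the growing factor $(n+1)$.
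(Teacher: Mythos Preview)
Your proposal is correct and follows essentially the same route as the paper: restrict dominating measures to the blocks $I_n$ to bound $\mu(I_n)$, use $I_0$ to witness pathology, and exploit the summability of $(n+1)/2^{n+1}$ to separate $\widehat{(\psi_i)}_\sigma(\omega)$ from $\widehat{\psi_i}(\omega)$. One small imprecision: Proposition~\ref{prop:pathology-Topsoe}(\ref{prop:pathology-Topsoe:infinite}) as stated only gives $\widehat{\psi_i}\neq 0$, not $\widehat{\psi_i}(\omega)=\infty$; you need the actual $\{0,\infty\}$-valued measure from its proof (which the paper writes out explicitly) to conclude the latter.
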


\begin{proof}
Since $\psi_i(I_0)=1>0$ for each $i$, we obtain  $\widehat{(\psi_i)}_\sigma\neq 0$ by Proposition~\ref{prop:pathology-Topsoe}(\ref{prop:pathology-Topsoe:lsc}).

We show that $\psi_i$ is pathological for each $i$.
We fix  $i<4$ and take any measure $\mu\leq \psi_i$.
Since $\mu\restriction \cP(I_0)$ is a measure on $I_0$  which is dominated by $\phi_0$, we obtain that $\mu(I_0)\leq 1/2 < 1 = \psi_i(I_0) $.
Thus $\widehat{\psi_i}\neq \psi_i$.

We show $\widehat{(\psi_0)}_\sigma  = \widehat{\psi_0}$.
It is enough to observe that for any  measure $\mu\leq \psi_0$ we have $\mu(\omega\setminus I_0)\leq \psi_0(\omega\setminus I_0)=0$. Thus, $\mu$ takes nonzero values only for subsets of a finite set $I_0$, and consequently $\mu$ is a $\sigma$-measure.

We show  $\widehat{(\psi_i)}_\sigma  \neq  \widehat{\psi_i}$ and $\widehat{\psi_i}(\omega)=\infty$ for $i\in \{2,3\}$.
Since $\phi_n(I_n)=1$ for every $n$, we obtain that $\psi_i(\omega) =\infty$.
Let $\mu:\cP(\omega)\to[0,\infty]$ be given by 
$$\mu(A) = \begin{cases}
    \infty&\text{ if $\psi_i(A)=\infty$,}\\
    0&\text{otherwise.}
\end{cases}$$ 
Since  $\mu$ is a measure and $\mu\leq \psi_i$, we obtain $\widehat{\psi_i}(\omega)\geq \mu(\omega)=\infty$.
Once we show that $\widehat{(\psi_i)}_\sigma(\omega)<\infty$, the proof of this case will be finished.
Let $\mu\leq\psi_i$ be a $\sigma$-measure.
Since $\mu\restriction\cP(I_k)$ is a measure on $I_k$ dominated by $(k+1)\phi_k$, we obtain that 
$$\mu\left(\bigcup_{k<n}I_k\right) 
= 
\sum_{k<n}\mu\left(I_k\right)
\leq 
\sum_{k<n}\frac{k+1}{2^{k+1}}
< 
\sum_{k<\omega}\frac{k+1}{2^{k+1}} = 2 <\infty
$$
for every $n\in \omega$.
Since $\widehat{(\psi_i)}_\sigma$ is lower semicontinuous (by Proposition~\ref{prop:properties-of-phi-hat}(\ref{prop:properties-of-phi-hat:sigma-hat-is-lsc})), we have 
$$\widehat{(\psi_i)}_\sigma(\omega) 
= 
\lim_{n\to\infty} \widehat{(\psi_i)}_\sigma\left(\bigcup_{k<n}I_k\right)
\leq 2 <\infty.$$

Finally, we show that $\widehat{\psi_3}$ is not lower semicontinuous. In the previous paragraph, we showed that $\widehat{\psi_3}(\omega)=\infty$. 
By Proposition~\ref{prop:properties-of-phi-hat}(\ref{prop:properties-of-phi-hat:finite-sets}), we know that $\widehat{\psi_3}(F)=\widehat{(\psi_3)}_\sigma(F)$ for every finite set $F$, so  
$$\widehat{\psi_3}(F) = \widehat{(\psi_3)}_\sigma(F) \leq 2,$$
and consequently 
$$\lim_{n\to\infty} \widehat{\psi_3}(n) \leq 2 < \infty =\widehat{\psi_3}(\omega).$$
\end{proof}

\begin{theorem}[{\cite[Theorem~2]{MR0412369} (see also \cite[Example~3]{MR0419712})}]
\label{thm:herer-christensen:phi-hat-equals-zero}
    There exists a nonzero submeasure $\chi$ on $\omega$ such that $\widehat{\chi}=0$, $\chi(\omega)=1$ 
    and $\chi(F)=0$ for every finite set $F\subseteq\omega$.
\end{theorem}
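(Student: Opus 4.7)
My plan is to glue together $\varepsilon$-pathological finite submeasures supplied by Theorem~\ref{thm:herer-christensen:pathology-on-finite-set}. For each $n\geq 1$ I apply that theorem with $\varepsilon=1/n$ to obtain a finite set $I_n$ and a submeasure $\phi_n$ on $I_n$ with $\phi_n(I_n)=1$ and $\mu(I_n)\leq 1/n$ for every measure $\mu\leq\phi_n$. I take the $I_n$ to be pairwise disjoint with $\bigcup_n I_n=\omega$ (e.g., consecutive intervals), and extend each $\phi_n$ to all of $\cP(\omega)$ by $\phi_n(A):=\phi_n(A\cap I_n)$. As the candidate submeasure I try
$$\chi(A):=\limsup_{n\to\infty}\phi_n(A\cap I_n).$$

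The routine verifications come first: $\chi$ is a submeasure as a $\limsup$ of submeasures; $\chi(\omega)=\limsup 1=1\neq 0$; and $\chi(F)=0$ for every finite $F\subseteq\omega$, because $F\cap I_n=\emptyset$ for all but finitely many $n$. A first pathology observation: for any fixed $n$ and any $A\subseteq I_n$, the sequence $\phi_k(A\cap I_k)$ is nonzero only at $k=n$, so $\chi(A)=0$; consequently every measure $\mu\leq\chi$ vanishes on each block $I_n$ and, by finite additivity, on every finite union of blocks.

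The crux---and the main obstacle---is to upgrade this block-vanishing to $\widehat{\chi}=0$, i.e.~to rule out all nonzero \emph{purely} finitely additive measures dominated by $\chi$ (such as ultrafilter measures, which can vanish on every block while keeping $\mu(\omega)>0$). The leverage is the pathology of the $\phi_n$: by arranging the $\phi_n$ so that each $I_n$ contains many pairwise disjoint subsets of full $\phi_n$-value, one obtains pairwise disjoint global sets $A^j=\bigcup_n A^j_n\subseteq\omega$ each with $\chi(A^j)=1$. The family $\{A:\chi(A)\geq 1\}$ then contains arbitrarily large disjoint collections, so it admits no filter, and in particular no $\{0,1\}$-valued ultrafilter measure $\mu_\cU$ can satisfy $\mu_\cU\leq\chi$. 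Extending this conclusion to arbitrary finitely additive measures requires a Stone-representation analysis: viewing such a $\mu$ as a countably additive $\tilde{\mu}$ on $\beta\omega\setminus\omega$, the constraint $\tilde{\mu}(\hat{A})=0$ for every $A$ with $\chi(A)=0$ must force $\tilde{\mu}=0$, which is achieved by constructing enough disjoint $\chi$-null transversals of the partition $\{I_n\}$ to cover $\beta\omega\setminus\omega$ in the appropriate sense. This delicate combinatorial step (which may necessitate refining the bare $\limsup$ construction so that $\chi$ becomes pathological in Tops\o{}e's sense rather than only $\sigma$-pathological) is the technical heart of the proof, carried out in detail by Herer and Christensen \cite[Theorem~2]{MR0412369}.
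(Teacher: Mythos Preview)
The paper does not prove this theorem: it is quoted from Herer--Christensen (and Tops\o{}e) with no argument supplied, so there is no ``paper's own proof'' to compare your attempt against.

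As for your proposal itself: the candidate $\chi(A)=\limsup_n\phi_n(A\cap I_n)$ and the easy verifications ($\chi$ is a submeasure, $\chi(\omega)=1$, $\chi$ vanishes on finite sets, hence $\widehat{\chi}_\sigma=0$) are correct. But you have not proved the theorem, and you say so yourself: the decisive claim $\widehat{\chi}=0$ is deferred to the reference. The two partial arguments you offer do not close the gap. The observation that one can arrange arbitrarily many pairwise disjoint sets $A^1,\dots,A^m$ with $\chi(A^j)=1$ only excludes $\{0,1\}$-valued (ultrafilter) measures; for a general finitely additive $\mu\le\chi$ it gives nothing beyond $\sum_j\mu(A^j)\le\mu(\omega)\le 1$. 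And the Stone-space discussion is a restatement of the problem, not a solution. Note also a genuine obstruction to any naive covering argument: for \emph{every} finite partition $\omega=D_1\cup\dots\cup D_k$ one has $\sum_j\chi(D_j)\ge\chi(\omega)=1$ by subadditivity, so the trivial bound $\mu(\omega)\le\sum_j\chi(D_j)$ is never informative. The block pathology $\widehat{\phi_n}(I_n)\le 1/n$ must therefore be fed into the argument in a less direct way---for instance via its LP-dual reformulation as a weighted cover of $I_n$ by sets of small total $\phi_n$-cost, which is essentially how Herer and Christensen proceed. Without carrying out that step, what you have is an outline, not a proof.
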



\section{Degree of pathology of submeasures}
\label{sec:degree-of-pathol}

As in the case of  definitions of pathological submeasure, there is also a mess with definitions of degrees of pathology of submeasures.  In this section we try to clean it up. The results we obtained are summarized in Table~\ref{tab:P-like-degrees}.

\begin{definition}
\ 
\begin{enumerate}
    \item 
The \emph{degree of pathology} (\cite[p.~21]{MR1711328}) of a submeasure $\phi$ on $X$  is given by 
$$P(\phi)=\sup \left\{ \frac{\phi(A)}{\widehat\phi(A)}: A \subseteq X \right\}$$
with the convention that $\infty/\infty=0/0=1$ and $a/0=\infty/a=\infty$ for a positive $a\in \R$.

\item 
We also define a ``$\sigma$-version'' of the degree of pathology by 
$$P_\sigma(\phi)=\sup \left\{ \frac{\phi(A)}{\widehat{\phi}_\sigma(A)}: A \subseteq X \right\}.$$

\item 
In \cite[Section~3.1]{MR4797308} (see also \cite[p.~5]{martinez2022pathology} or \cite[p.~3]{martinez2022fsigma}), the authors introduced a ``${\fin}$-variant'' of the degree of pathology by 
$$P_{\fin}(\phi)=\sup \left\{ \frac{\phi(F)}{\widehat\phi(F)}: \text{$F$ is a finite subset of $X$} \right\}.$$

\item Since $\widehat{\phi}(F)=\widehat{\phi}_\sigma(F)$ for every finite set $F\subseteq\omega$ 
(by Proposition~\ref{prop:properties-of-phi-hat}(\ref{prop:properties-of-phi-hat:finite-sets})), a ``$\sigma$-${\fin}$-variation'' of the degree of pathology coincides with $P_{\fin}$.
\end{enumerate}
\end{definition}

The following proposition summarize basic properties of these three versions of the degree of pathology. 

\begin{proposition}
\label{prop:basic-prop-of-P-phi}    
Let $\phi$ be a submeasure on $X$.
\begin{enumerate}

\item $1\leq P_{\fin}(\phi)\leq P(\phi)\leq P_\sigma(\phi)\leq \infty$.\label{prop:basic-prop-of-P-phi:inequalities}    

\item \label{prop:basic-prop-of-P-phi:nonpathology:item}
\begin{enumerate}
\item $\phi$ is nonpathological $\iff$ $P(\phi)=1$.\label{prop:basic-prop-of-P-phi:nonpathology}    
\item  $\phi$ is $\sigma$-nonpathological $\iff$ $P_\sigma(\phi)=1$.\label{prop:basic-prop-of-P-phi:sigma-nonpathology}    
\end{enumerate}

\item If $\phi$ is  a lower semicontinuous submeasure on $\omega$, then\label{prop:basic-prop-of-P-phi:nonpathology-for-lsc}
\begin{enumerate}
\item $P_{\fin}(\phi)=P(\phi) =P_\sigma(\phi)$,

\item
$\phi$   is nonpathological $\iff$ $P_{\fin}(\phi)=1.$

\end{enumerate}

\end{enumerate}
\end{proposition}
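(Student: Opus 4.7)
The plan is to treat the three items in order, with the bulk of the work sitting in item~(\ref{prop:basic-prop-of-P-phi:nonpathology-for-lsc}). Item~(\ref{prop:basic-prop-of-P-phi:inequalities}) is essentially bookkeeping from the definitions. The bound $P_{\fin}(\phi)\leq P(\phi)$ holds because $P_{\fin}$ is the supremum of the same ratios taken over a restricted index set; $P(\phi)\leq P_\sigma(\phi)$ follows from $\widehat{\phi}_\sigma\leq \widehat{\phi}$ (Proposition~\ref{prop:properties-of-phi-hat}(\ref{prop:properties-of-phi-hat:submeasure})), since shrinking the denominator increases the ratio under the conventions on $0/0$, $\infty/\infty$, and $a/0$; and $P_{\fin}(\phi)\geq 1$ follows from $\widehat{\phi}\leq \phi$, the trivial case $F=\emptyset$ already giving a ratio of $1$.

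For item~(\ref{prop:basic-prop-of-P-phi:nonpathology:item}), I would argue both (a) and (b) simultaneously, since the proofs differ only by replacing $\widehat{\phi}$ with $\widehat{\phi}_\sigma$. The forward direction is immediate: if $\phi=\widehat{\phi}$ then every ratio equals $1$ by convention. For the converse, assume $P(\phi)=1$. For any $A$ with $\phi(A),\widehat{\phi}(A)\in(0,\infty)$ the ratio being at most $1$ forces $\phi(A)\leq\widehat{\phi}(A)$, and combining with $\widehat{\phi}(A)\leq\phi(A)$ gives equality. The remaining cases are handled by a short case distinction on the values: $\phi(A)=0$ forces $\widehat{\phi}(A)=0$ via $\widehat{\phi}\leq \phi$, and $\phi(A)=\infty$ with $\widehat{\phi}(A)<\infty$ would make the ratio $\infty$, contradicting $P(\phi)=1$.

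The substantive content lies in item~(\ref{prop:basic-prop-of-P-phi:nonpathology-for-lsc}). By item~(\ref{prop:basic-prop-of-P-phi:inequalities}), it suffices to prove $P_\sigma(\phi)\leq P_{\fin}(\phi)$ under the assumption that $\phi$ is lsc; if $P_{\fin}(\phi)=\infty$ there is nothing to show, so assume $P_{\fin}(\phi)=C<\infty$. The key tool is Proposition~\ref{prop:properties-of-phi-hat}(\ref{prop:properties-of-phi-hat:finite-sets}), which gives $\widehat{\phi}(F)=\widehat{\phi}_\sigma(F)$ for every finite $F$, so that $\phi(F)\leq C\widehat{\phi}_\sigma(F)$. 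For any $A\subseteq\omega$ and any finite $F\subseteq A$, monotonicity of $\widehat{\phi}_\sigma$ gives $\phi(F)\leq C\widehat{\phi}_\sigma(A)$, and then the lower semicontinuity of $\phi$ allows me to pass to the supremum and conclude $\phi(A)\leq C\widehat{\phi}_\sigma(A)$. I expect the main obstacle to be a clean treatment of the boundary cases of the convention: when $\widehat{\phi}_\sigma(A)=0$, the preceding estimate forces $\phi(F)=0$ for every finite $F\subseteq A$ and hence $\phi(A)=0$ by lsc, so the ratio evaluates to $1$; when both values are infinite, the convention gives $1$ directly; and the case $\phi(A)=\infty$ with $\widehat{\phi}_\sigma(A)<\infty$ is ruled out by the same estimate applied to finite $F\subseteq A$ with $\phi(F)$ arbitrarily large. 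Part~(b) of item~(\ref{prop:basic-prop-of-P-phi:nonpathology-for-lsc}) is then immediate from part~(a) together with item~(\ref{prop:basic-prop-of-P-phi:nonpathology}).
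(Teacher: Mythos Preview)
Your argument is correct and follows exactly the route the paper intends: the paper's proof is the single line ``Straightforward with the aid of Proposition~\ref{prop:properties-of-submeasures}'', and what you have written is precisely the unpacking of that line, using the inequality $\widehat{\phi}_\sigma\leq\widehat{\phi}\leq\phi$, the identity $\widehat{\phi}(F)=\widehat{\phi}_\sigma(F)$ on finite sets, and lower semicontinuity to pass from finite subsets to arbitrary sets. Your handling of the boundary cases under the $0/0=\infty/\infty=1$, $a/0=\infty$ conventions is careful and complete, which is the only place a gap could have crept in.
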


\begin{proof}
Straightforward with the aid of Proposition~\ref{prop:properties-of-submeasures}.
\end{proof}

In Table~\ref{tab:P-like-degrees}, 
we present  examples  which show that almost all  configurations of ``$<$'' and ``$=$'' signs in item~(\ref{prop:basic-prop-of-P-phi:inequalities}) of Proposition~\ref{prop:basic-prop-of-P-phi} are possible. 
Below we provide some proofs for the  examples presented in Table~\ref{tab:P-like-degrees}.


\begin{table}
    \centering
    \begin{tabular}{|c|c|c|c|c|c|c|c|c||l|}
\hline
1 & $\leq$ & $P_{\fin}(\phi)$ & $\leq$ & $P(\phi)$ & $\leq$ & $P_{\sigma}(\phi)$ & $\leq$ & $\infty$ & Example \\ \hline \hline
   1      & $=$ & 1  & $=$ & 1 & $=$ & 1 & $<$ & $\infty$ & Every $\sigma$-measure.\\ \hline
   1      & $=$ & 1  & $=$ & 1 & $<$ & $\infty$ & $=$ & $\infty$ & $\delta^\infty_{\fin}$ (Prop.~\ref{prop:properties-of-delta-submeasures}(\ref{prop:properties-of-delta-submeasures:delta-ideal-infinity:hat}))\\ \hline
   1      & $=$ & 1  & $<$ & $\infty$ & $=$ & $\infty$ & $=$ & $\infty$ & $\chi$ (Thm.~\ref{thm:herer-christensen:phi-hat-equals-zero}) \\ \hline
   1      & $<$ & $\infty$  & $=$ & $\infty$ & $=$ & $\infty$ & $=$ & $\infty$ & $\psi_1$, $\psi_2$ (Prop.~\ref{prop:pathological-lsc-submeasures:P})\\ \hline \hline
   1      & $=$ & 1  & $=$ & 1 & $<$ & 2  & $<$ & $\infty$ & $\delta+\delta_{\fin}$ (Prop.~\ref{prop:delta+delta-FIN:P})\\ \hline
   1      & $=$ & 1 & $<$ &   & $=$ & 2 & $<$ & $\infty$ &  \textbf{?} \hfill  $ \delta+\chi$ (Prop.~\ref{prop:delta+chi:P}) \\ \hline
   1      & $=$ & 1 & $<$ & 4/3 & $<$ & $\infty$ & $=$ & $\infty$& $\tau_3^\infty$ (Prop.~\ref{prop:tau-inf:P})) \\ \hline
   1      & $<$ & 4/3 & $=$ & 4/3 & $=$ & 4/3 & $<$ & $\infty$& $\tau_3$ (Prop.~\ref{prop:pathological-submeasure-tau}(\ref{prop:pathological-submeasure-tau:formula-for-hat}))  \\ \hline
   1      & $<$ & 4/3 & $=$ & 4/3 & $<$ & $\infty$ & $=$ & $\infty$ & $\tau_3\oplus \tau_3^\infty$ (Prop.~\ref{prop:oplus-of-submeasures-P})\\ \hline
   1      & $<$ & 4/3 & $<$ & $\infty$ & $=$ & $\infty$ & $=$ & $\infty$ & $\tau_3\oplus \chi$ (Prop.~\ref{prop:oplus-of-submeasures-P}) \\ \hline \hline
   1      & $=$ & 1 & $<$ & 3/2  & $<$ & 6 & $<$ & $\infty$ & $\eta$ (Prop.~\ref{prop:eta}) \\ \hline
   1      & $<$ & 4/3 & $=$ & 4/3 & $<$ & 2 & $<$ & $\infty$& $(\delta+\delta_{\fin})\oplus\tau_3$ (Prop.~\ref{prop:oplus-of-submeasures-P})\\ \hline
   1      & $<$ & 4/3 & $<$ & & $=$ & 2 & $<$ & $\infty$ & \textbf{?} \hfill \  \ $\tau_3 \oplus (\delta+\chi)$  (Prop.~\ref{prop:oplus-of-submeasures-P}) \\ \hline
   1      & $<$ & 4/3 & $<$ & 3/2 & $<$ & $\infty$ & $=$ & $\infty$ & $\tau_3\oplus \tau_3^\infty\oplus \eta$ (Prop.~\ref{prop:oplus-of-submeasures-P}) \\ \hline \hline 
   1      & $<$ & 4/3 & $<$ & 3/2 & $<$ & 6 & $<$ & $\infty$ & $\eta\oplus\tau_3$ (Prop.~\ref{prop:oplus-of-submeasures-P})\\ \hline         
    \end{tabular}
    \caption{Possible configurations of ``$<$'' and ``$=$'' signs in  Proposition~\ref{prop:basic-prop-of-P-phi}(\ref{prop:basic-prop-of-P-phi:inequalities}).}
    \label{tab:P-like-degrees}
\end{table}

\begin{proposition}
\label{prop:pathological-lsc-submeasures:P}
$P_{\fin}(\psi_1)=P_{\fin}(\psi_2)=\infty$.
\end{proposition}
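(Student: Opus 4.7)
The plan is to exhibit, for each of $\psi_1$ and $\psi_2$, a sequence of finite sets $F_n \subseteq \omega$ along which the ratio $\psi_i(F_n)/\widehat{\psi_i}(F_n)$ grows without bound. The natural candidates are the blocks $F_n = I_n$ themselves, which are finite by construction.

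The first step is to compute $\psi_i(I_n)$. Since the intervals $I_k$ are pairwise disjoint, for any $A \subseteq I_n$ we have $\phi_k(A \cap I_k) = \phi_k(\emptyset) = 0$ for $k \neq n$, so both $\psi_1$ and $\psi_2$ restrict to $\phi_n$ on $\cP(I_n)$. In particular $\psi_i(I_n) = \phi_n(I_n) = 1$ for $i \in \{1,2\}$.

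The second (key) step is to bound $\widehat{\psi_i}(I_n)$ from above by $1/2^{n+1}$. Let $\mu$ be any measure on $\omega$ with $\mu \leq \psi_i$. Then $\mu \restriction \cP(I_n)$ is a measure on the finite set $I_n$, and for every $B \subseteq I_n$ we have $\mu(B) \leq \psi_i(B) = \phi_n(B)$; that is, $\mu \restriction \cP(I_n)$ is dominated by $\phi_n$. By the defining property of $\phi_n$ coming from Theorem~\ref{thm:herer-christensen:pathology-on-finite-set}, this forces $\mu(I_n) \leq 1/2^{n+1}$. Taking the supremum over all such measures gives $\widehat{\psi_i}(I_n) \leq 1/2^{n+1}$.

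Combining the two steps, we obtain
\[
\frac{\psi_i(I_n)}{\widehat{\psi_i}(I_n)} \geq \frac{1}{1/2^{n+1}} = 2^{n+1}
\]
for every $n$, so $P_{\fin}(\psi_i) \geq \sup_n 2^{n+1} = \infty$, which proves the proposition. I do not expect any real obstacle here: the construction was designed so that $\psi_i$ inherits the pathology of each $\phi_n$ blockwise, and the disjointness of the $I_n$'s ensures that the restriction of any dominated measure to $I_n$ is again dominated by $\phi_n$.
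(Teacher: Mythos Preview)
Your proof is correct and follows exactly the same approach as the paper's: the paper's proof consists of the single sentence ``It is enough to notice that $\psi_i(I_n)=1$ and $\widehat{\psi_i}(I_n)\leq 1/2^{n+1}$ for each $n\in \omega$ and $i=1,2$,'' and you have simply supplied the details behind those two observations.
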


\begin{proof}
It is enough to notice that   $\psi_i(I_n)=1$ and $\widehat{\psi_i}(I_n)\leq 1/2^{n+1}$ for each $n\in \omega$ and $i=1,2$.
\end{proof}

\begin{proposition}
\label{prop:delta+delta-FIN:P}
$P(\delta+\delta_{\fin})=1$ and $P_\sigma(\delta+\delta_{\fin})=2$.
\end{proposition}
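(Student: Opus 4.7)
The plan is to reduce both computations to facts already established in Proposition~\ref{prop:properties-of-delta-submeasures}(\ref{prop:properties-of-delta-submeasures:delta-plus-delta-I}) applied with $\I=\fin$, which tells us precisely that
\[
\widehat{\delta+\delta_{\fin}}=\delta+\delta_{\fin}
\qquad\text{and}\qquad
\widehat{(\delta+\delta_{\fin})}_\sigma = \delta .
\]

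For the first equality $P(\delta+\delta_{\fin})=1$, I would simply invoke Proposition~\ref{prop:basic-prop-of-P-phi}(\ref{prop:basic-prop-of-P-phi:nonpathology}): since $\delta+\delta_{\fin}$ is nonpathological (by the first displayed equality above), its degree of pathology equals $1$.

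For the second equality $P_\sigma(\delta+\delta_{\fin})=2$, I would compute the supremum explicitly using the second displayed equality. Observe that $(\delta+\delta_{\fin})(A)=0$ when $A=\emptyset$, $(\delta+\delta_{\fin})(A)=1$ when $A$ is nonempty and finite, and $(\delta+\delta_{\fin})(A)=2$ when $A$ is infinite, while $\delta(A)$ is $0$ for $A=\emptyset$ and $1$ otherwise. Applying the conventions $0/0=1$ and the definition of $P_\sigma$, the ratio $(\delta+\delta_{\fin})(A)/\delta(A)$ equals $1$ for every finite $A$ and equals $2$ for every infinite $A\subseteq\omega$, so the supremum is exactly $2$.

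There is no real obstacle here; the whole argument is a direct reading off of the already-computed $\widehat{\cdot}$ and $\widehat{\cdot}_\sigma$ from Proposition~\ref{prop:properties-of-delta-submeasures}, combined with the case analysis on whether $A$ is empty, nonempty finite, or infinite. The only small point to be careful about is the use of the convention $0/0=1$ to handle $A=\emptyset$ without inflating the supremum.
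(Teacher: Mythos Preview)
Your proof is correct and follows essentially the same approach as the paper: invoke Proposition~\ref{prop:properties-of-delta-submeasures}(\ref{prop:properties-of-delta-submeasures:delta-plus-delta-I}) for the identities $\widehat{\delta+\delta_{\fin}}=\delta+\delta_{\fin}$ and $\widehat{(\delta+\delta_{\fin})}_\sigma=\delta$, then use Proposition~\ref{prop:basic-prop-of-P-phi}(\ref{prop:basic-prop-of-P-phi:nonpathology}) for the first equality and read off the ratio for the second. The only difference is that you spell out the three-case computation for $P_\sigma$, whereas the paper simply states that the value follows from $\widehat{(\delta+\delta_{\fin})}_\sigma=\delta$.
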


\begin{proof}
Because
$\delta+\delta_{\fin}$ is nonpathological 
(Prop.~\ref{prop:properties-of-delta-submeasures}(\ref{prop:properties-of-delta-submeasures:delta-plus-delta-I})), we obtain $P(\delta+\delta_{\fin})=1$ by Prop.~\ref{prop:basic-prop-of-P-phi}(\ref{prop:basic-prop-of-P-phi:nonpathology}).
The equality $P_\sigma(\delta+\delta_{\fin})=2$
follows from the fact that $\widehat{(\delta+\delta_{\fin})}_\sigma=\delta$ (Prop.~\ref{prop:properties-of-delta-submeasures}(\ref{prop:properties-of-delta-submeasures:delta-plus-delta-I})).
\end{proof}

\begin{proposition}
\label{prop:delta+chi:P}
$P_{\fin}(\delta+\chi)=1$
and 
$P_\sigma(\delta+\chi)=2$
\end{proposition}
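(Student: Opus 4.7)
The plan is to compute each quantity directly, exploiting the fact that $\chi$ vanishes on every finite set while $\delta$ equals $1$ on every nonempty set.

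\textbf{Computing $P_{\fin}(\delta+\chi)$.} For a finite $F\subseteq\omega$, the properties of $\chi$ from Theorem~\ref{thm:herer-christensen:phi-hat-equals-zero} give $(\delta+\chi)(F)=\delta(F)$, which is $0$ when $F=\emptyset$ and $1$ otherwise. The case $F=\emptyset$ is handled by the convention $0/0=1$. For a nonempty finite $F$, I would pick any $n\in F$ and use the point-mass $\sigma$-measure $\delta_n$: since $\delta_n(B)\leq\delta(B)\leq(\delta+\chi)(B)$ for every $B\subseteq\omega$, we have $\delta_n\leq\delta+\chi$, and $\delta_n(F)=1$. Hence $\widehat{\delta+\chi}(F)=1=(\delta+\chi)(F)$, which gives $P_{\fin}(\delta+\chi)=1$.

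\textbf{Computing $P_\sigma(\delta+\chi)$.} I would first evaluate the ratio at $A=\omega$. Since $\chi(\omega)=1$, we have $(\delta+\chi)(\omega)=2$. To pin down $\widehat{(\delta+\chi)}_\sigma(\omega)$, the lower bound $\geq 1$ follows from $\delta_0\leq\delta+\chi$ as above. For the upper bound, any $\sigma$-measure $\mu\leq\delta+\chi$ has the form $\mu=\mu_f$ for some $f:\omega\to[0,\infty]$; applying the inequality $\mu_f(F)=\sum_{n\in F}f(n)\leq(\delta+\chi)(F)=1$ to arbitrary finite $F$ and letting $F$ exhaust $\omega$ gives $\mu(\omega)=\sum_{n\in\omega}f(n)\leq 1$. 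Thus $\widehat{(\delta+\chi)}_\sigma(\omega)=1$, and the ratio at $\omega$ equals $2$, so $P_\sigma(\delta+\chi)\geq 2$.

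For the matching upper bound, I would argue that for every nonempty $A\subseteq\omega$ the numerator satisfies $(\delta+\chi)(A)\leq 1+\chi(\omega)=2$, while the denominator satisfies $\widehat{(\delta+\chi)}_\sigma(A)\geq 1$ by considering $\delta_n$ for any $n\in A$. Thus the ratio is at most $2$ for every nonempty $A$ (and equal to $1$ for $A=\emptyset$ by convention), yielding $P_\sigma(\delta+\chi)=2$.

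The verifications are routine; the only conceptual point is to notice that although $\widehat{\chi}=0$, \emph{adding} $\delta$ forces point-mass measures into the class of competitors, so the nonpathological envelope of $\delta+\chi$ is bounded below by $1$ on every nonempty set — whereas simultaneously the vanishing of $\chi$ on finite sets caps $\widehat{(\delta+\chi)}_\sigma(\omega)$ at $1$, leaving the full submeasure value $2$ at $\omega$ unattained by any $\sigma$-measure.
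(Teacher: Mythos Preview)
Your proof is correct and follows essentially the same route as the paper's: both arguments rest on the two observations that $(\delta+\chi)(F)=\delta(F)$ for every finite $F$ (since $\chi$ vanishes on finite sets) and that $\widehat{(\delta+\chi)}_\sigma$ coincides with $\delta$ (the lower bound via point masses $\delta_n\leq\delta+\chi$, the upper bound via the finite-set values together with lower semicontinuity). The paper packages this as the single identity $\widehat{(\delta+\chi)}_\sigma=\delta$ and reads off $P_\sigma=2$, while you split into a lower bound at $A=\omega$ and a uniform upper bound over all nonempty $A$; the content is the same.
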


\begin{proof}
    To show that $P_{\fin}(\delta+\chi)=1$, it is enough to observe that 
for every finite set $F$, 
$(\delta+\chi)(F)=\delta(F)$, so $\widehat{(\delta+\chi)}(F)=\widehat{\delta}(F)$

To show that $P_\sigma(\delta+\chi)=2$, we first observe that $\widehat{(\delta+\chi)}_\sigma
\geq 
\widehat{\delta}_\sigma = \delta
$.
On the other hand, if we have a measure $\mu$ such that $\mu\leq \delta+\chi$, then $\mu(F)\leq (\delta+\chi)(F) = 1$, so by using lsc of $\widehat{(\delta+\chi)}_\sigma$ we obtain that  
$\widehat{(\delta+\chi)}_\sigma(A)\leq 1 =\delta(A)$
for every nonempty $A$.
\end{proof}

\begin{proposition}
\label{prop:tau-inf:P}
$P_{\fin}(\tau_3^\infty)=1$, 
$P(\tau_3^\infty)=4/3$
and
$P_{\sigma}(\tau_3^\infty)=\infty$.
\end{proposition}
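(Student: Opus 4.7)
The key observation is that Proposition~\ref{prop:tau-3-infty}(\ref{prop:tau-3-infty:formula-for-hat}) already gives us full information about $\tau_3^\infty$, $\widehat{\tau_3^\infty}$, and $\widehat{(\tau_3^\infty)}_\sigma$: the first two take only the values $0,1,3/2,2$, determined by $|\{i<3:|A\cap A_i|=\omega\}|$, while $\widehat{(\tau_3^\infty)}_\sigma=0$. All three equalities then reduce to tallying case-by-case ratios.

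First I would dispose of $P_{\fin}(\tau_3^\infty)=1$. Since every finite set $F\subseteq\omega$ satisfies $|F\cap A_i|<\omega$ for all $i<3$, we have $\tau_3^\infty(F)=0$, and by Proposition~\ref{prop:properties-of-phi-hat}(\ref{prop:properties-of-phi-hat:finite-sets}) also $\widehat{\tau_3^\infty}(F)=0$. The defining ratio $\tau_3^\infty(F)/\widehat{\tau_3^\infty}(F)$ thus equals $0/0=1$ under the stated convention, so $P_{\fin}(\tau_3^\infty)=1$.

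Next, for $P(\tau_3^\infty)=4/3$, I would split on the three possible values of $\tau_3^\infty(A)$. When $\tau_3^\infty(A)=0$, Proposition~\ref{prop:tau-3-infty}(\ref{prop:tau-3-infty:formula-for-hat}) gives $\widehat{\tau_3^\infty}(A)=0$ and the ratio is $1$. When $\tau_3^\infty(A)=1$, we have $\widehat{\tau_3^\infty}(A)=1$, again yielding ratio $1$. When $\tau_3^\infty(A)=2$, we get $\widehat{\tau_3^\infty}(A)=3/2$, so the ratio is $4/3$. Taking the supremum and noting that the last case is realized (e.g.\ by $A=A_0\cup A_1\cup A_2$) gives $P(\tau_3^\infty)=4/3$.

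Finally, for $P_\sigma(\tau_3^\infty)=\infty$, I would use $\widehat{(\tau_3^\infty)}_\sigma=0$ from Proposition~\ref{prop:tau-3-infty}(\ref{prop:tau-3-infty:formula-for-hat}). Choosing any $A$ with $\tau_3^\infty(A)>0$ (e.g., $A=A_0$, for which $\tau_3^\infty(A_0)=1$), the ratio is $1/0=\infty$ by convention, so $P_\sigma(\tau_3^\infty)=\infty$. No step here is a genuine obstacle: the entire proposition is a bookkeeping consequence of the values computed in Proposition~\ref{prop:tau-3-infty}, together with the conventions in the definition of $P$, $P_\sigma$, and $P_{\fin}$.
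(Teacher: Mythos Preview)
Your proof is correct and follows essentially the same approach as the paper: both arguments read off the values of $\tau_3^\infty$, $\widehat{\tau_3^\infty}$, and $\widehat{(\tau_3^\infty)}_\sigma$ from Proposition~\ref{prop:tau-3-infty}(\ref{prop:tau-3-infty:formula-for-hat}) and compute the three degrees of pathology directly from the defining ratios and conventions. Your write-up is simply more explicit in spelling out the case distinctions.
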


\begin{proof}
    Since $\tau_3^\infty(F)=0$ for every finite set $F$, we obtain 
$P_{\fin}(\tau_3^\infty)=1$.
By Prop.~\ref{prop:tau-3-infty},  $\widehat{\tau_3^\infty}=0$, so  we obtain  
$P_\sigma(\tau_3^\infty)=\infty$.
Lastly, 
$P(\tau_3^\infty)=4/3$
follows from Proposition~\ref{prop:tau-3-infty}(\ref{prop:tau-3-infty:formula-for-hat}).
\end{proof}

For   submeasures  $\phi$ and $\psi$ on $\omega$, we define
two submeasures 
$\phi\oplus_m\psi$ 
and $\phi\oplus_s\psi$ on $X = \{0,1\}\times \omega$ by 
\begin{equation*}
    \begin{split}
        (\phi\oplus_m\psi)(A)
        & =
        \max\{\phi(A),\psi(A)\}
\\
(\phi\oplus_s\psi)(A)
        & =
        \phi(A)+\psi(A),
    \end{split}
\end{equation*}
where 
$\phi(A)$ and $\psi(A)$ mean 
$\phi(\{n\in \omega: (0,n)\in A\})$
and 
$\psi(\{n\in \omega: (1,n)\in A\})$, respectively.

\begin{lemma}
\label{lem:oplus-of-submeasures}
For every  submeasures  $\phi$ and $\psi$ on $\omega$ and every $C\subseteq \{0,1\}\times \omega$, we have
\begin{enumerate}
    \item $\widehat{\phi\oplus_m\psi}(C) \geq \max\{\widehat{\phi}(C),\widehat{\psi}(C)\}$,
    \item $\widehat{\phi\oplus_s\psi}(C) = \widehat{\phi}(C)+\widehat{\psi}(C)$.  
\end{enumerate}
The same properties hold for $\widehat{\phi}_\sigma$ and $\widehat{\psi}_\sigma$.
\end{lemma}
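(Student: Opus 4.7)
The plan is to split $X = \{0,1\} \times \omega$ using the projections $C^0 = \{n : (0,n) \in C\}$ and $C^1 = \{n : (1,n) \in C\}$, and to argue both directions by explicitly transporting measures between the two copies of $\omega$ and the disjoint union $X$. Throughout, the same constructions will preserve $\sigma$-additivity, so the statement for $\widehat{\phi}_\sigma$ and $\widehat{\psi}_\sigma$ follows by the same proof.

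For the max case, I would prove only the stated lower bound. Given a measure $\mu \leq \phi$ on $\omega$, lift it to $X$ by $\tilde\mu(A) = \mu(A^0)$. This is a measure on $X$ because $A \mapsto A^0$ preserves disjoint unions, and $\tilde\mu(A) = \mu(A^0) \leq \phi(A^0) \leq \max\{\phi(A^0),\psi(A^1)\} = (\phi\oplus_m\psi)(A)$. Hence $\widehat{\phi\oplus_m\psi}(C) \geq \tilde\mu(C) = \mu(C^0)$, and taking the supremum over $\mu \leq \phi$ gives $\widehat{\phi\oplus_m\psi}(C) \geq \widehat{\phi}(C^0)$. The symmetric construction with $\nu \leq \psi$ lifted via $A \mapsto \nu(A^1)$ gives the analogous bound with $\psi$, and together they yield $\widehat{\phi\oplus_m\psi}(C) \geq \max\{\widehat{\phi}(C),\widehat{\psi}(C)\}$.

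For the sum case, the ``$\geq$'' direction combines the two lifts: given $\mu \leq \phi$ and $\nu \leq \psi$, set $\rho(A) = \mu(A^0) + \nu(A^1)$. This is a measure on $X$, and subadditivity together with $(\phi\oplus_s\psi)(A) = \phi(A^0)+\psi(A^1)$ shows $\rho \leq \phi\oplus_s\psi$; hence $\widehat{\phi\oplus_s\psi}(C) \geq \mu(C^0)+\nu(C^1)$, and taking suprema independently in $\mu$ and $\nu$ gives $\widehat{\phi\oplus_s\psi}(C) \geq \widehat{\phi}(C)+\widehat{\psi}(C)$. For ``$\leq$'', start with any measure $\rho \leq \phi\oplus_s\psi$ on $X$ and define $\mu(A) = \rho(\{0\}\times A)$ and $\nu(A) = \rho(\{1\}\times A)$; both are measures on $\omega$. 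The key check is $\mu \leq \phi$, which follows from $\mu(A) = \rho(\{0\}\times A) \leq (\phi\oplus_s\psi)(\{0\}\times A) = \phi(A)+\psi(\emptyset) = \phi(A)$, and analogously $\nu \leq \psi$. Then $\rho(C) = \rho(C\cap(\{0\}\times\omega)) + \rho(C\cap(\{1\}\times\omega)) = \mu(C^0) + \nu(C^1) \leq \widehat{\phi}(C) + \widehat{\psi}(C)$, and taking the supremum over $\rho$ completes the reverse inequality.

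Finally, for the $\sigma$-versions, one checks that each construction is compatible with countable additivity. The lift $\tilde\mu$ of a $\sigma$-measure is a $\sigma$-measure because $A \mapsto A^0$ commutes with countable disjoint unions, the sum $\mu(A^0)+\nu(A^1)$ of two $\sigma$-measures is a $\sigma$-measure, and the restrictions $A \mapsto \rho(\{i\}\times A)$ of a $\sigma$-measure are $\sigma$-measures. No part of this requires significant work; the only point worth noting is that in the restriction step one must verify the comparison $\rho(\{0\}\times A) \leq \phi(A)$ using precisely the definition of $\phi\oplus_s\psi$ on ``vertical slices,'' where the other coordinate is empty — a trivial but essential observation.
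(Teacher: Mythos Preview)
Your proof is correct and follows essentially the same approach as the paper's: lifting measures from $\omega$ to $\{0,1\}\times\omega$ via the projections for the lower bounds, and restricting measures on $\{0,1\}\times\omega$ back to the two copies of $\omega$ for the upper bound in the sum case. Your explicit notation $C^0,C^1$ and your verification that each construction preserves $\sigma$-additivity are slightly more detailed than the paper's presentation, but the argument is the same.
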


\begin{proof}
    (1) 
    Take any $C\subseteq \{0,1\}\times \omega$.
    Without loss of generality, we can assume $\widehat{\phi(C)}\geq \widehat{\psi(C)}$.
    Then there is a measure $\mu$ on $\omega$ such that $\mu\leq \phi$ and $\mu(C)> M$.  Let $\nu$ be a measure on $\{0,1\}\times\omega$ such that $\nu(\{0\}\times A)=\mu(A)$ and $\nu(\{1\}\times A)=0$ for every set $A\subseteq \omega$.
    Then $\nu\leq \phi\oplus_m\psi$ and $\nu(C) = \mu(C) > M$.
    Hence $\widehat{\phi\oplus_m\psi}(C)> M$.

(2,$\leq$)
Take any measure $\mu\leq \widehat{\phi\oplus_s\psi}$.
Let $\mu_i(A)=\mu(\{(i,n):n\in A\})$ for $A\subseteq \omega$ and $i=0,1$.
Then $\mu_0$ and $\mu_1$ are measures on $\omega$ and $\mu_0\leq \phi$, $\mu_1\leq \psi$.
Hence 
$\mu_0(A)\leq \widehat{\phi}(A)$
and 
$\mu_1(A)\leq \widehat{\psi}(A)$ for every $A\subseteq \omega$.
Then 
$\mu(C) = \mu_0(C)+\mu_1(C) \leq \widehat{\phi}(C)+\widehat{\psi}(C)$.

(2,$\geq$)
Take any $C\subseteq \{0,1\}\times \omega$.
    Take any $M_1< \widehat{\phi}(C)$ and $M_2<\widehat{\psi}(C)$. Then there are measures $\mu_1$ and $\mu_2$  on $\omega$ such that $\mu_1\leq \phi$, $\mu_2\leq \psi$ and $\mu_i(C)> M_i$ for $i=1,2$.  Let $\nu=\mu_1\oplus_s\mu_2$.
    Then $\nu\leq \phi\oplus_s\psi$ and $\nu(C) = \mu_1(C)+\mu_2(C)>M_1+M_2 $.
    Hence $\widehat{\phi\oplus_s\psi}(C)> M_1+M_2$.
\end{proof}

\begin{proposition}
\label{prop:oplus-of-submeasures-P}
For every  submeasures  $\phi$ and $\psi$, we have
\begin{enumerate}
    \item $P(\phi\oplus_m\psi) = \max\{P(\phi),P(\psi)\}$,
    \item $P(\phi\oplus_s\psi) = \max\{P(\phi),P(\psi)\}$.
\end{enumerate}
The same equalities hold for $P_\sigma$ and $P_{\fin}$.
\end{proposition}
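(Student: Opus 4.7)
The plan is to prove both equalities by the same two-step argument: establish $\geq$ by plugging in ``one-sided'' test sets $C=\{0\}\times A$ and $C=\{1\}\times A$, and establish $\leq$ by bounding the ratio $(\phi\oplus_{\ast}\psi)(C)/\widehat{\phi\oplus_{\ast}\psi}(C)$ for an arbitrary $C\subseteq\{0,1\}\times\omega$. Write $C_i=\{n\in\omega:(i,n)\in C\}$ for $i=0,1$ throughout.

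For the $\geq$ direction in both cases, fix $A\subseteq\omega$ and set $C=\{0\}\times A$. Since $\psi(\emptyset)=0$, the numerator equals $\phi(A)$ for both $\oplus_m$ and $\oplus_s$. For the denominator, Lemma \ref{lem:oplus-of-submeasures}(2) gives $\widehat{\phi\oplus_s\psi}(C)=\widehat{\phi}(A)$ directly, while for $\oplus_m$ the lemma only supplies $\widehat{\phi\oplus_m\psi}(C)\geq\widehat{\phi}(A)$. The matching upper bound in the $\oplus_m$ case is obtained by taking any measure $\mu\leq\phi\oplus_m\psi$ and defining $\mu_0(B)=\mu(\{0\}\times B)$; then $\mu_0(B)\leq(\phi\oplus_m\psi)(\{0\}\times B)=\max\{\phi(B),\psi(\emptyset)\}=\phi(B)$, so $\mu_0\leq\phi$ and hence $\mu(\{0\}\times A)\leq\widehat{\phi}(A)$. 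Therefore the ratio at $C=\{0\}\times A$ equals $\phi(A)/\widehat{\phi}(A)$, whose supremum over $A$ is $P(\phi)$; symmetrically $C=\{1\}\times A$ yields $P(\psi)$, and so $P(\phi\oplus_{\ast}\psi)\geq\max\{P(\phi),P(\psi)\}$.

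For the $\leq$ direction with $\oplus_m$: given $C$, assume without loss of generality that $\phi(C_0)\geq\psi(C_1)$, so $(\phi\oplus_m\psi)(C)=\phi(C_0)$; combined with $\widehat{\phi\oplus_m\psi}(C)\geq\widehat{\phi}(C_0)$ from the lemma, the ratio is at most $\phi(C_0)/\widehat{\phi}(C_0)\leq P(\phi)\leq\max\{P(\phi),P(\psi)\}$. For $\leq$ with $\oplus_s$: by Lemma \ref{lem:oplus-of-submeasures}(2), $\widehat{\phi\oplus_s\psi}(C)=\widehat{\phi}(C_0)+\widehat{\psi}(C_1)$, so with $a=\phi(C_0)$, $c=\widehat{\phi}(C_0)$, $b=\psi(C_1)$, $d=\widehat{\psi}(C_1)$ the ratio is $(a+b)/(c+d)$, and the mediant inequality $(a+b)/(c+d)\leq\max(a/c,b/d)$ finishes the argument; the only care needed is a short case split for the conventions $0/0=1$ and positive$/0=\infty$, which is easily handled since $\widehat{\phi}\leq\phi$ forces $c=0,a>0$ to already give the maximum value $\infty$.

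Finally, the very same proof works verbatim for $P_\sigma$ and $P_{\fin}$: Lemma \ref{lem:oplus-of-submeasures} is stated to hold also for $\widehat{\phi}_\sigma$, and for $P_{\fin}$ one only remarks that the supremum is taken over finite $C\subseteq\{0,1\}\times\omega$, which is the same as saying both $C_0$ and $C_1$ are finite, so the test sets $\{0\}\times A$ and $\{1\}\times A$ with $A$ finite still exhaust the ratios arising on each side. The only place requiring genuine attention is the equality $\widehat{\phi\oplus_m\psi}(\{0\}\times A)=\widehat{\phi}(A)$ (and its $\sigma$-analogue), and beyond that the computation is a routine application of the mediant inequality, so I do not anticipate a serious obstacle.
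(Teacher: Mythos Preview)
Your proof is correct and follows essentially the same approach as the paper's: the $\leq$ direction via Lemma~\ref{lem:oplus-of-submeasures} together with the mediant-type inequality, and the $\geq$ direction by testing on one-sided sets $\{0\}\times A$ and $\{1\}\times A$. If anything, you are a bit more careful than the paper in spelling out why $\widehat{\phi\oplus_m\psi}(\{0\}\times A)=\widehat{\phi}(A)$ (the paper just writes ``consequently''), so your argument is slightly more complete at that point.
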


\begin{proof}
(1, $\leq$)
It is enough to notice, using Lemma~\ref{lem:oplus-of-submeasures}, that for every set $C$ we have  
\begin{equation*}
    \begin{split}
        \frac{\phi\oplus_m\psi(C)}{\widehat{\phi\oplus_m\psi}(C)} 
    \leq 
    \frac{\max\{\phi(C),\psi(C)\}}{\max\{\widehat{\phi}(C),\widehat{\psi}(C)\}} 
    \leq 
\max\left\{\frac{\phi(C)}{\widehat{\phi}(C)}, 
\frac{\psi(C)}{\widehat{\psi}(C)}\right\} 
\leq \max\{P(\phi),P(\psi)\}.  
\end{split}
\end{equation*}

(1, $\geq$)
Without loss of generality, we can assume that $P(\phi)\geq P(\psi)$.
Take any  $M<P(\phi)$.
Then there is $C\subseteq \{0\}\times \omega$ such that 
$\phi(C)/\widehat{\phi}(C)>M$.
Then $(\phi\oplus_m\psi)(C) = \max\{\phi(C),0\}=\phi(C)$, 
and consequently $\widehat{\phi\oplus_m\psi}(C)=\widehat{\phi}(C)$.
Thus,
$$\frac{(\phi\oplus_m\psi)(C)}{\widehat{\phi\oplus_m\psi}(C)}
=
\frac{\phi(C)}{\widehat{\phi}(C)}>M.$$
Hence, $P(\phi\oplus_m\psi)>M$. 

(2) The inequality $\geq$ can be done as in item (1), and to show the inequality $\leq$ it is enough to notice, using Lemma~\ref{lem:oplus-of-submeasures}, that for every set $C$ we have  
\begin{equation*}
    \begin{split}
        \frac{\phi\oplus_s\psi(C)}{\widehat{\phi\oplus_s\psi}(C)} 
    =
    \frac{\phi(C)+\psi(C)}{\widehat{\phi}(C)+\widehat{\psi}(C)} 
    \leq 
\max\left\{\frac{\phi(C)}{\widehat{\phi}(C)}, 
\frac{\psi(C)}{\widehat{\psi}(C)}\right\} 
\leq \max\{P(\phi),P(\psi)\}.  
\end{split}
\end{equation*}

\end{proof}

\begin{proposition}\label{prop:eta}
Let $A_0,A_1,A_2,A_3\subseteq \omega$ be infinite and pairwise disjoint. 
The submeasure $\eta$ on $\omega$ given by
    $$\eta(A)=
    \begin{cases}
    0&\text{if $A=\emptyset$,}\\    
    1&\text{if $A\neq\emptyset$ and $|\{i<4:|A\cap A_i|=\omega\}|=0$,}\\    
    3&\text{if $1\leq |\{i<4:|A\cap A_i|=\omega\}|\leq 3$,}\\
    6&\text{if $|\{i<4:|A\cap A_i|=\omega\}|=4$}
    \end{cases}$$
is pathological, not lower semicontinuous  and 
$0 \neq \delta=\widehat{(\eta)}_\sigma\neq\widehat{\eta}\neq \eta$.\label{prop:eta:formula-for-hat}
Moreover, 
    $$\widehat{\eta}(A)=
    \begin{cases}
    0&\text{if $\eta(A)=0$,}\\    
    1&\text{if $\eta(A)=1$,}\\
    3&\text{if $\eta(A)=3$,}\\
    4&\text{if $\eta(A)=6$.}
    \end{cases}$$
\end{proposition}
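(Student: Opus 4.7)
The plan is to mimic the proof of Proposition~\ref{prop:pathological-submeasure-tau} for $\tau_3^\infty$, with the new ingredient being a small linear-programming argument to pin down $\widehat{\eta}$ on sets meeting all four $A_i$'s infinitely. I first handle the easy parts. Since no finite set meets any $A_i$ infinitely, $\eta(F) \leq 1$ on every finite $F$ while $\eta(\omega) = 6$, so $\eta$ is not lower semicontinuous. And $\widehat{\eta}_\sigma = \delta$: for each nonempty $A$, picking $n \in A$, the $\sigma$-measure $\delta_n$ is dominated by $\eta$ (since $n \in B$ forces $B \neq \emptyset$, hence $\eta(B) \geq 1$), so $\widehat{\eta}_\sigma(A) \geq 1$; conversely any $\sigma$-measure $\mu \leq \eta$ has $\mu(F) \leq \eta(F) \leq 1$ on finite $F$, and lower semicontinuity forces $\mu(A) \leq 1$ for all $A$.

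The main obstacle is showing $\widehat{\eta}(A) = 4$ (rather than $6$) whenever $A$ meets all four $A_i$'s infinitely. For the upper bound, given any measure $\mu \leq \eta$, write $A$ as the disjoint union $B_0 \sqcup B_1 \sqcup B_2 \sqcup B_3 \sqcup B_4$, where $B_i = A \cap A_i$ for $i \leq 3$ and $B_4 = A \setminus \bigcup_{i \leq 3} A_i$. The key observation is that, for each $j \leq 3$, the set $A \setminus B_j$ meets exactly three of the $A_i$'s infinitely (all except $A_j$, since $B_4$ meets none of them), so $\mu(A \setminus B_j) \leq \eta(A \setminus B_j) = 3$. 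Summing these four inequalities yields $3\sum_{i \leq 3}\mu(B_i) + 4\mu(B_4) \leq 12$, and since $T := \mu(B_4) \geq 0$ we have $3(S+T) \leq 3S + 4T \leq 12$ with $S := \sum_{i\leq 3}\mu(B_i)$, hence $\mu(A) = S+T \leq 4$. The remaining upper bounds $\widehat{\eta}(A) \leq \eta(A)$ for $\eta(A) \in \{0,1,3\}$ are automatic.

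For the matching lower bounds I transplant the maximal-ideal construction from the proof of Proposition~\ref{prop:pathological-submeasure-tau}. For each $i$ with $A \cap A_i$ infinite, fix a maximal ideal $\I_i \supseteq \fin$ with $A \cap A_i \notin \I_i$; by maximality $\omega \setminus A_i \in \I_i$, so $B \notin \I_i$ forces $B \cap A_i$ to be infinite. If $\eta(A) = 1$, take $\mu = \delta_n$ with $n \in A$. If $\eta(A) = 3$ with exactly $k \in \{1,2,3\}$ indices $i$ such that $A \cap A_i$ is infinite, take $\mu = (3/k) \sum_i \delta_{\I_i}$: then $\mu(B) \leq (3/k) \cdot |\{i : B \notin \I_i\}| \leq 3$, and whenever any summand contributes we have $\eta(B) \geq 3$, so $\mu \leq \eta$; also $\mu(A) = 3$. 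If $\eta(A) = 6$, take $\mu = \sum_{i=0}^3 \delta_{\I_i}$: then $\mu(B) = |\{i : B \notin \I_i\}| \leq |\{i : B \cap A_i \text{ infinite}\}|$, which is $\leq 3 = \eta(B)$ when $\eta(B) = 3$ and $\leq 4 \leq 6 = \eta(B)$ when $\eta(B) = 6$, so $\mu \leq \eta$; and $\mu(A) = 4$.

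Assembling the four cases yields the stated formula for $\widehat{\eta}$. In particular $\widehat{\eta}(\omega) = 4 \neq 6 = \eta(\omega)$, so $\eta$ is pathological, and $\widehat{\eta}_\sigma(\omega) = 1 \neq 4 = \widehat{\eta}(\omega)$, which completes the chain $0 \neq \delta = \widehat{\eta}_\sigma \neq \widehat{\eta} \neq \eta$.
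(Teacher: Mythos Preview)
Your proof is correct and follows essentially the same approach as the paper's: same argument for non-lsc and for $\widehat{\eta}_\sigma=\delta$, same maximal-ideal constructions for the lower bounds on $\widehat{\eta}$, and the same summing-of-constraints idea for the upper bound $\widehat{\eta}(A)\leq 4$ when $\eta(A)=6$. Your upper bound is in fact slightly more careful than the paper's ``proceed analogously to Proposition~\ref{prop:pathological-submeasure-on-3}'', since you explicitly track the residual piece $B_4=A\setminus\bigcup_{i\leq 3}A_i$ (the $A_i$ are not assumed to cover $\omega$); also note that for the $\eta(A)=3$ lower bound the single-ideal measure $3\delta_{\cI}$ used in the paper already suffices, so your $(3/k)\sum_i\delta_{\cI_i}$ variant, while correct, is not needed.
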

\begin{proof}
Clearly, $\eta(F)\leq 1$ for every finite $F\subseteq\omega$ and $\eta(\omega)=6$, so $\eta$ is not lower semicontinuous. It is also an easy observation that $\widehat{(\eta)}_\sigma=\delta$.


We will now calculate the value $\widehat{\eta}(A)$ for every set $A$, which will also prove that $\widehat{(\eta)}_\sigma\neq\widehat{\eta}\neq \eta$.

If $\eta(A)=0$, then $\widehat{\eta}(A)\leq \eta(A)=0$, so we are done.

If $\eta(A)=1$, then for any $n\in A$ take the measure $\delta_n$ in order to  obtain $\delta_n\leq \eta$ and $\delta_n(A)=1$. 

If $\eta(A)=3$, then there is $i<4$ such that $A\cap A_i$ is infinite.
Let $\I$ be a maximal ideal on $\omega$ which contains all finite sets and  $A\cap A_i\notin \I$.
Then $3\delta_\I$ is a nonzero measure (by Proposition~\ref{prop:properties-of-delta-submeasures}(\ref{prop:properties-of-delta-submeasures:delta-ideal:measure})).
We show that $3\delta_\I\leq \eta$.
If $B\in \I$, then $3\delta_\I(B)=0\leq \eta(B)$. Assume that $B\notin \I$.
Since $\I$ is maximal, $B\cap A_i\notin\I$. Consequently $A_i\cap B$ is infinite, so $\eta(B)\geq 3=3\delta_\I(B)$.
Thus $3\leq \widehat{\eta}(A)\leq \eta(A)=3$.

If $\eta(A)=6$, 
to show that $\widehat{\eta}(A)\geq 4$, we take four maximal ideals $\I_0,\I_1,\I_2$ and $\I_3$ on $\omega$  such that $A\cap A_i\notin \I_i$ for $i=0,1,2,3$. 
Then we define a measure 
$\mu=(\delta_{\I_0}+\delta_{\I_1}+\delta_{\I_2}+\delta_{\I_3})$ 
and repeating the argument from the previous paragraph, we show that $\mu\leq \eta$. Since $\mu(A)=4$, we obtain $\widehat{\eta}(A)\geq 4$.
To obtain the reverse inequality, we can proceed analogously to the proof of Proposition~\ref{prop:pathological-submeasure-on-3}. 
\end{proof}

\begin{proposition}
Suppose that $\phi$ is a submeasure on $\omega$ such that 
$$1=P_{\fin}(\phi)<P(\phi)=P_\sigma(\phi)<\infty.$$
Let $\alpha\in \R$ be such that  $\alpha\leq 2$ and $3/2<\alpha<(3/2)\cdot P(\phi)$.
Let $\psi$ be a submeasure on $\omega$ given by
$$\psi(A)=
\begin{cases}
    0&\text{if $A\cap \{0,1,2\}=\emptyset$,}\\
    1&\text{if $1\leq |A\cap \{0,1,2\}|\leq 2$,}\\
    \alpha &\text{if $|A\cap \{0,1,2\}|=3$}.
\end{cases}$$
Then 
$$P_{\fin}(\psi)=P(\psi)=P_\sigma(\psi) = (2/3)\cdot \alpha,$$
and consequently,
$$1<P_{\fin}(\phi\oplus\psi)<P(\phi\oplus\psi)=P_\sigma(\phi\oplus\psi)<\infty.$$
\end{proposition}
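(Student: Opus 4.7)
The plan is to compute the three degrees of pathology of $\psi$ separately and then transfer the result to $\phi\oplus\psi$ via Proposition~\ref{prop:oplus-of-submeasures-P}. I begin by observing that $\psi(A)$ depends only on $A\cap\{0,1,2\}$, so $\psi$ is lower semicontinuous, and that for any measure $\mu\leq\psi$ the restriction $\mu(A\setminus\{0,1,2\})$ vanishes; hence the computation of $\widehat{\psi}$ reduces to the three-point set $\{0,1,2\}$ exactly as in Proposition~\ref{prop:pathological-submeasure-on-3}. Repeating that argument, the three pair-inequalities $\mu(\{i\})+\mu(\{j\})\leq 1$ force $\mu(\{0,1,2\})\leq 3/2$, and this bound is attained by the uniform measure $\mu(\{0\})=\mu(\{1\})=\mu(\{2\})=1/2$. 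Since the hypothesis gives $3/2<\alpha$, the bound $3/2$ is strictly tighter than the competing bound $\psi(\{0,1,2\})=\alpha$. On singletons and doubletons of $\{0,1,2\}$ the Dirac measures $\delta_i$ witness $\widehat{\psi}=\psi=1$.

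Collecting these facts, the ratio $\psi(A)/\widehat{\psi}(A)$ equals $1$ unless $|A\cap\{0,1,2\}|=3$, in which case it equals $\alpha/(3/2)=(2/3)\alpha$; hence $P(\psi)=(2/3)\alpha$. Because $\psi$ is lower semicontinuous, Proposition~\ref{prop:basic-prop-of-P-phi}(\ref{prop:basic-prop-of-P-phi:nonpathology-for-lsc}) yields $P_{\fin}(\psi)=P(\psi)=P_\sigma(\psi)=(2/3)\alpha$ simultaneously, which is the first conclusion of the proposition.

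For the second conclusion I would apply Proposition~\ref{prop:oplus-of-submeasures-P} separately to each of $P_{\fin}$, $P$, and $P_\sigma$. Since $P_{\fin}(\phi)=1$ and $\alpha>3/2$ gives $(2/3)\alpha>1$, we obtain $P_{\fin}(\phi\oplus\psi)=\max\{1,(2/3)\alpha\}=(2/3)\alpha>1$. The hypothesis $\alpha<(3/2)P(\phi)$, equivalently $(2/3)\alpha<P(\phi)$, then gives $P(\phi\oplus\psi)=\max\{P(\phi),(2/3)\alpha\}=P(\phi)$, and since $P_\sigma(\phi)=P(\phi)$ the same computation gives $P_\sigma(\phi\oplus\psi)=P(\phi)$. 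Together with $P(\phi)<\infty$, this produces the required chain
\[
1<P_{\fin}(\phi\oplus\psi)<P(\phi\oplus\psi)=P_\sigma(\phi\oplus\psi)<\infty.
\]

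The only delicate step is deciding which of the two upper bounds on $\mu(\{0,1,2\})$ is binding: the pair-subadditivity bound $3/2$ or the singleton bound $\alpha$. The assumption $3/2<\alpha$ is exactly what ensures $3/2$ is binding, while the assumption $\alpha\leq 2$ is exactly what keeps $\psi$ subadditive on $\{0,1,2\}$, so both parts of the hypothesis on $\alpha$ are used in visible places.
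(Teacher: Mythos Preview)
Your proof is correct and follows exactly the route the paper indicates: you compute $\widehat{\psi}$ by the same pair-inequality argument as for $\tau_3$ in Proposition~\ref{prop:pathological-submeasure-on-3}/\ref{prop:pathological-submeasure-tau}, then invoke Proposition~\ref{prop:oplus-of-submeasures-P} for $\phi\oplus\psi$. The paper's own proof is just a two-line pointer to those results, and you have faithfully filled in the details (including the observation that $\alpha\leq 2$ is what guarantees subadditivity of $\psi$).
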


\begin{proof}
The degree of pathology of $\psi$ can be calculated the same way as for $\tau_3$ (using an appropriate modification of 
Proposition~\ref{prop:pathological-submeasure-tau}).
Whereas the degree of pathology of $\psi\oplus\psi$ can be easily calculated with the aid of Proposition~\ref{prop:oplus-of-submeasures-P}.
\end{proof}

\begin{question}
    \ 

\begin{enumerate}
    \item What is the value of $P(\delta+\chi)$?
    \item Is there a submeasure $\phi$ with 
$1=P_{\fin}(\phi)<P(\phi)=P_\sigma(\phi)<\infty$?
\end{enumerate}
\end{question}


\part{(Non)pathological ideals}
\label{part:ideals}

In this part of the paper, we alter the definition of an ideal. Namely, from this point on  we additionally assume that 
\emph{an ideal $\I$ on $X$ has to contain all finite subsets of $X$}.


\section{(Non)pathological ideals}
\label{sec:def-of-path-ideal}

In the literature, the (non)pathology of ideals is only considered in the realm of analytic P-ideals and $F_\sigma$ ideals so far. 
Let us recall well-known characterizations of analytic P-ideals and $F_\sigma$ ideals that are expressed in terms of submeasures and are necessary to phrase  the definition of pathological ideals.
\begin{theorem}[\cite{solecki}, see also {\cite[Theorem~1.2.5]{MR1711328}}]
\label{thm:exh-is-analytic-p}
The following conditions are equivalent.
\begin{enumerate}
\item $\I$ is an analytic P-ideal on $\omega$.
\item There is a lower semicontinuous submeasure $\phi$ on $\omega$ such that
$\lim_{n\to\infty} \phi(\omega\setminus n) >0 $  and  
$$\I=\Exh(\phi)=\left\{A\subseteq\omega: \lim_{n\to\infty} \phi(A\setminus n)=0\right\}.$$
\end{enumerate}
\end{theorem}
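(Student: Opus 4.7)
The plan is to prove the two implications separately, with $(2)\Rightarrow(1)$ being a routine verification and $(1)\Rightarrow(2)$ being the substantive content due to Solecki.

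For $(2)\Rightarrow(1)$, I would check that $\Exh(\phi)$ is hereditary and closed under finite unions directly from the submeasure axioms, and that it contains every finite set (since $A\setminus n=\emptyset$ eventually). The hypothesis $\lim_n \phi(\omega\setminus n)>0$ guarantees $\omega\notin\Exh(\phi)$. For the P-ideal property, given $\{A_k\}_{k\in\omega}\subseteq\Exh(\phi)$, set $B_k=A_0\cup\cdots\cup A_k$ and pick $0=n_{-1}<n_0<n_1<\cdots$ with $\phi(B_k\setminus n_k)<2^{-k}$. Define
\[
B=\bigcup_{k\in\omega} \bigl(B_k\cap [n_{k-1},n_k)\bigr).
\]
Then $A_j\setminus B\subseteq n_{j-1}$ (so $A_j\subseteq^* B$), and a tail estimate $\phi(B\setminus n_K)\leq \sum_{k\geq K}2^{-k}$ gives $B\in\Exh(\phi)$. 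For analyticity (in fact $F_{\sigma\delta}$-ness), rewrite
\[
\Exh(\phi)=\bigcap_{k\geq 1}\bigcup_{N\in\omega}\{A\subseteq\omega:\phi(A\setminus N)\leq 1/k\},
\]
and note that lower semicontinuity of $\phi$ makes each inner set closed in the Cantor topology.

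For $(1)\Rightarrow(2)$, the strategy is to invoke Mazur's representation theorem: every analytic P-ideal is in fact $F_{\sigma\delta}$, so one can write $\I=\bigcap_n \bigcup_m K_{n,m}$ where each $K_{n,m}\subseteq \cP(\omega)$ is compact and hereditary (closing under subsets preserves both properties because $\I$ is hereditary). For each $n$, one builds a lower semicontinuous submeasure $\phi_n$ whose ``finite'' ideal $\{A:\phi_n(A)<\infty\}$ equals $\bigcup_m K_{n,m}$; concretely, taking
\[
\phi_n(A)=\sup\bigl\{m:\exists\, F\subseteq A\text{ finite with }F\notin K_{n,m}\bigr\}
\]
(or a normalized variant) does the job after arranging $K_{n,m}\subseteq K_{n,m+1}$. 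One then combines these into a single lsc submeasure such as
\[
\phi(A)=\sum_{n\in\omega}2^{-n}\min\bigl\{\phi_n(A),1\bigr\}+\psi(A),
\]
where $\psi$ is a correcting lsc submeasure ensuring $\lim_n\phi(\omega\setminus n)>0$ (which is forced anyway by $\omega\notin\I$). Verifying $\Exh(\phi)=\I$ then reduces to the equivalence: $A$ exhausts $\phi$ iff for every $n$, $A\setminus N$ eventually lands in some fixed compact stratum $K_{n,m(n)}$, which is where the P-ideal hypothesis is crucially used to upgrade ``membership in each $F_\sigma$ layer'' to ``exhaustive membership''.

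The main obstacle is the $(1)\Rightarrow(2)$ direction, and within it the Mazur step showing that analytic P-ideals are $F_{\sigma\delta}$: the P-ness must be converted into a countable-intersection structure on $\I$, usually through the Polishability of $\I$ (putting a Polish group topology on the symmetric difference group of $\I^*$ for which $\I$ becomes $F_\sigma$ in a tighter sense). An alternative route, which Solecki takes in \cite{solecki}, bypasses the explicit $F_{\sigma\delta}$ decomposition and constructs $\phi$ directly from the Polishable structure of the analytic P-ideal, defining $\phi(A)$ as (roughly) a norm coming from the Polish group topology on $\I^*$; the delicate points there are verifying lower semicontinuity and the coincidence $\I=\Exh(\phi)$.
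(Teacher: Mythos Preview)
The paper does not prove this theorem at all; it is stated as a cited result (from Solecki, with a secondary reference to Farah's memoir) and used as a black box. So there is no proof in the paper to compare your proposal against.

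On the merits of your sketch: the $(2)\Rightarrow(1)$ direction is essentially correct. There is a small off-by-one in the P-ideal verification---with your indexing you need a bound on $\phi(B_k\cap[n_{k-1},n_k))$, but you only controlled $\phi(B_k\setminus n_k)$; shifting to $B=\bigcup_k\bigl(B_k\cap[n_k,n_{k+1})\bigr)$ fixes this and gives $\phi(B\setminus n_K)\leq\sum_{k\geq K}2^{-k}$ as claimed.

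The $(1)\Rightarrow(2)$ direction, however, is not a proof but a description of two strategies, neither of which you carry out. Your first strategy---establish that analytic P-ideals are $F_{\sigma\delta}$ and then build $\phi$ from a stratification---is circular as stated: the $F_{\sigma\delta}$ fact is the standard \emph{corollary} of Solecki's theorem, not an input to it, and there is no independent ``Mazur representation theorem'' giving it. Even granting an $F_{\sigma\delta}$ presentation, your proposed submeasure $\phi(A)=\sum_n 2^{-n}\min\{\phi_n(A),1\}+\psi(A)$ is bounded, and you do not explain why $\Exh(\phi)=\I$; the sentence ``which is where the P-ideal hypothesis is crucially used'' is precisely the missing argument. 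Your second strategy---Solecki's actual route via Polishability---is the right one, but you only name it. So the substantive implication remains unproved in your proposal.
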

\begin{theorem}[{\cite[Lemma~1.2]{mazur}}, see also {\cite[Theorem~1.2.5]{MR1711328}}]
The following conditions are equivalent.
\begin{enumerate}
\item
$\I$ is an $F_\sigma$ ideal on $\omega$.
\item There is  a lower semicontinuous submeasure $\phi$ on $\omega$ such that 
$\phi(n)<\infty$ for each $n\in \omega$, $\phi(\omega)=\infty$  and 
$$\I= {\fin}(\phi)=\left\{A\subseteq \omega: \phi(A)<\infty  \right\}.$$
\end{enumerate}
\end{theorem}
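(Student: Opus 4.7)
My plan is to prove the two directions separately. The forward implication is essentially bookkeeping; the reverse requires constructing a lower semicontinuous submeasure from a closed filtration of the ideal, with a delicate compactness step to ensure subadditivity.

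For $(2)\Rightarrow(1)$, I would decompose
$$\Fin(\phi) = \bigcup_{k\in\omega}\{A\subseteq\omega : \phi(A)\leq k\},$$
and use lower semicontinuity in the form $\phi(A) = \sup_n \phi(A\cap n)$ to obtain
$$\{A:\phi(A)\leq k\} = \bigcap_{n\in\omega}\{A : \phi(A\cap n)\leq k\}.$$
Each set in the intersection depends only on $A\cap n$ and so is clopen in the Cantor space, exhibiting $\Fin(\phi)$ as $F_\sigma$. Subadditivity and monotonicity of $\phi$ give the ideal axioms, while the hypotheses $\phi(n)<\infty$ and $\phi(\omega)=\infty$ place every finite set into $\Fin(\phi)$ and keep $\omega$ out.

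For $(1)\Rightarrow(2)$, I would write $\I=\bigcup_n F_n$ with $F_n\subseteq F_{n+1}$ closed in $2^\omega$ and $\emptyset\in F_n$. The preparatory step, which is the heart of the argument, is to replace each $F_n$ by
$$\widetilde{F}_n = \bigl\{A\subseteq\omega : A\subseteq A_1\cup\cdots\cup A_n\text{ for some }A_1,\dots,A_n\in F_n\bigr\},$$
which is hereditary and, crucially, still closed in $2^\omega$: since $2^\omega$ is compact, the union map is continuous and the subset relation $\{(A,B):A\subseteq B\}$ is closed, so $\widetilde{F}_n$ is the projection onto the first coordinate of a closed subset of the compact space $2^\omega\times F_n^n$. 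One readily checks $\widetilde{F}_n\subseteq\widetilde{F}_{n+1}$ (inserting $\emptyset$ as an extra summand) and $\bigcup_n\widetilde{F}_n=\I$. I would then define
$$\phi(A) = \sup\bigl\{\min\{n : F\in\widetilde{F}_n\} : F\subseteq A\text{ finite}\bigr\},$$
which is lower semicontinuous by construction.

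The remaining verifications are routine. For subadditivity, if finite sets $F,G$ satisfy $\min\{n:F\in\widetilde{F}_n\}=n_0$ and $\min\{n:G\in\widetilde{F}_n\}=m_0$, then $F\cup G$ is contained in a union of $n_0+m_0$ members of $F_{n_0+m_0}$, so $\min\{n:(F\cup G)\in\widetilde{F}_n\}\leq n_0+m_0$, and this pointwise inequality on finite subsets transfers to $\phi(A\cup B)\leq\phi(A)+\phi(B)$. The identity $\Fin(\phi)=\I$ is the key equivalence: if $A\in\I$ then $A\in F_n$ for some $n$ and heredity gives $F\in\widetilde{F}_n$ for every finite $F\subseteq A$, so $\phi(A)\leq n$; conversely, if $\phi(A)=n<\infty$, then every finite $F\subseteq A$ lies in the closed set $\widetilde{F}_n$, and since $A\cap k\to A$ in $2^\omega$, closedness forces $A\in\widetilde{F}_n\subseteq\I$. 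The side conditions $\phi(k)<\infty$ and $\phi(\omega)=\infty$ follow from $\{k\}\in\I$ and $\omega\notin\I$. The principal obstacle is the normalization step: a naïve choice such as $\phi(A)=\min\{n:A\in F_n\}$ need not be subadditive because the original $F_n$ need not be closed under finite unions, and the construction of $\widetilde{F}_n$ with controlled arity (together with the compactness argument showing $\widetilde{F}_n$ stays closed) is precisely what fixes this.
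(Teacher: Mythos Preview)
Your proof is correct and is essentially the classical argument due to Mazur: the $(2)\Rightarrow(1)$ direction is the standard clopen–cylinder decomposition, and for $(1)\Rightarrow(2)$ you carry out exactly the normalization step (replacing $F_n$ by the hereditary, $n$-fold-union-closed sets $\widetilde{F}_n$, kept closed by compactness of $2^\omega$) that makes the rank function subadditive. The paper itself does not supply a proof of this theorem; it is quoted from the literature (Mazur \cite{mazur}, Farah \cite{MR1711328}), and your argument matches the original one found there.
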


Having the above characterizations we are prepared to present the definitions of (non)pathological  ideals.  

\begin{definition}[{\cite[pp.~25 and 53]{MR1711328}}] 
    An analytic P-ideal ($F_\sigma$ ideal, resp.) $\I$ is \emph{nonpathological} if there exists a nonpathological, lower semicontinuous submeasure $\phi$  such that $\I=\Exh(\phi)$ ($\I={\fin}(\phi)$, resp.). Otherwise, we say that $\I$ is \emph{pathological}.
\end{definition}

By the definition, every submeasure which defines  a pathological ideal has to be pathological. On the other hand, every nonpathological ideal can be  always expressed with the aid of some pathological submeasure (\cite[Proposition~3.4]{MR4797308}). Indeed, take any lsc submeasure $\phi$ with $\I = {\fin}(\phi)$ ($\I=\Exh(\phi)$, resp.), then the submeasure $\psi(A) = \tau(A\cap \{0,1,2\})+\phi(A\setminus \{0,1,2\})$ (where $\tau$ is defined in Proposition~\ref{prop:pathological-submeasure-on-3}) 
is pathological (as $P(\psi)\geq P(\tau)=4/3>1$ by Proposition~\ref{prop:oplus-of-submeasures-P}) and ${\fin}(\psi)={\fin}(\phi)$ ($\Exh(\psi)=\Exh(\phi)$, resp.).   

It is known (\cite[Lemma~1.2.2]{MR1711328}) that $\Exh(\phi)\subseteq {\fin}(\phi)$ for every lsc submeasure $\phi$, however there is no relation between pathology of $\exh(\phi)$ and pathology of ${\fin}(\phi)$ in general. Indeed, first we observe that if $\Exh(\phi)$ is a pathological ideal, then  $\psi=\min\{1,\phi\}$ is  a lsc submeasure with $\exh(\phi)=\exh(\psi)$ and ${\fin}(\psi)=\cP(\omega)$, so $\Exh(\psi)$ is pathological whereas $\fin(\psi)$ is not even an ideal. 
If we now take $\zeta(A)=\sup\{\psi(\{n\in\N: 2n\in A\}),|A\cap (2\N+1)| \}$ then $\fin(\zeta)=\cP(\N)\oplus\fin$ would be a proper nonpathological ideal while $\exh(\zeta)=\exh(\psi)\oplus \fin$ would be pathological.
On the other hand, if we take any pathological ideal ${\fin}(\phi)$ and put $\psi=\sup\{\phi,\delta\}$ then  ${\fin}(\psi)={\fin}(\phi)$ would still be pathological while $\exh(\psi)=\fin$ would be nonpathological.

Let us  describe some   important classes of nonpahological ideals.

Using 
submeasures $\phi_f$ and $\overline{\phi}_f$ which we defined at  page~\pageref{def:Erdos-Ulam-submeasure}
and using 
Proposition~\ref{prop:properties-of-asymptotic-densities}, we see that 
 the \emph{Erd\H{o}s-Ulam ideal generated by $f$} 
(\cite{just-krawczyk}),  
 $$\eu_{f}
=
\Exh(\phi_f) 
= 
\left\{A\subset\omega\ : 
\overline{\phi}_f(A)=0\right\},$$
is a nonpathological analytic P-ideal.
In particular, the the ideal          
$$\I_d=\left\{A\subseteq\omega:\limsup_{n\to\infty}\frac{|A\cap n|}{n}=0\right\}$$
of all sets of the \emph{asymptotic density} zero
is a nonpathological analytic P-ideal.

Using  measures $\mu_f$ which were  defined at page~\pageref{def:measure-for-summable-ideal}
we see
that the \emph{summable ideal generated by $f$} (\cite[Example~3, p.206]{MR0363911}),  
$$\I_f={\fin}(\mu_f)=\left\{A\subseteq\omega: \sum_{n\in A} f(n)<\infty\right\},$$ 
is a nonpathological $F_\sigma$ ideal.
In particular, the the ideal          
$$\I_{1/n}=\left\{A\subseteq\omega: \sum_{n\in A}\frac{1}{n+1}\right\}$$
is a nonpathological $F_\sigma$ ideal.

The following theorem gives a useful characterization of nonpathologicity  for  analytic  P-ideals. One of the items of this characterisation involves the \emph{Solecki ideal} $\cS$ introduced in \cite{MR1758325} (see also \cite[Section~3.6]{MR2777744}).

\begin{theorem}[{\cite[Corollary~5.6]{hrusak-katetov}}]
\label{thm:characterization-of-nonpath-of-analytic-P-ideals}    
For an analytic P-ideal $\I$ the following conditions are equivalent.
\begin{enumerate}
\item $\I$ is nonpathological.
    \item $\I\restriction A \leq_K \I_d$ for every $A\notin \I$.\label{thm:characterization-of-nonpath-of-analytic-P-ideals:item}    
\item $\cS\not\leq_K\I\restriction A$ for every $A\notin\I$.
\end{enumerate}
\end{theorem}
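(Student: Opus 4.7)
I would prove the equivalence by establishing $(1)\Rightarrow(2)\Rightarrow(3)\Rightarrow(1)$.

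For $(1)\Rightarrow(2)$: Fix a nonpathological lsc submeasure $\phi$ with $\I=\Exh(\phi)$, and let $A\notin\I$, so $\varepsilon:=\limsup_n\phi(A\setminus n)>0$. Because $\phi$ is lsc and nonpathological we have $\phi=\widehat{\phi}_\sigma$ by Proposition~\ref{prop:properties-of-phi-hat}(\ref{prop:properties-of-phi-hat:all-sets-for-lsc}), so for each $n$ Proposition~\ref{prop:phi-hat-vs-measure} yields a $\sigma$-measure $\mu_n\leq\phi$ with $\mu_n(A\setminus n)>\varepsilon/2$. A compactness/diagonalization step (taking a pointwise convergent subsequence in $[0,\phi(\omega)]^{\cP(\omega)}$, which is compact) produces a single finitely additive measure $\mu\leq\phi$ supported by $A$ with $\mu(A\setminus n)\geq \varepsilon/2$ for infinitely many $n$, hence $\mu(A)\geq\varepsilon/2$. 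I would then enumerate $A=\{a_0,a_1,\dots\}$ so that each initial segment approximates $\mu$ in the density sense, and define $f:\omega\to A$ with $f(k)=a_k$. The pullback of a density-zero subset of $\omega$ through $f$ would be $\mu$-null in tails, hence in $\Exh(\phi)\restriction A$. This yields $\I\restriction A\leq_K\I_d$.

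For $(2)\Rightarrow(3)$: I would use the fact (to be cited from Hru{\v{s}}{\'a}k's paper or re-derived) that $\cS$ is itself a pathological analytic P-ideal and that $\cS\not\leq_K\I_d$—otherwise transitivity of $\leq_K$ together with item~(1) applied to $\I_d$ (which is nonpathological) would make $\cS$ Katětov-below the nonpathological $\I_d$, yielding nonpathology of $\cS$, a contradiction. Given (2), if $\cS\leq_K\I\restriction A$ for some $A\notin\I$, then by (2) we also have $\I\restriction A\leq_K\I_d$, and transitivity gives $\cS\leq_K\I_d$, contradiction.

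For $(3)\Rightarrow(1)$: I would argue by contrapositive. Assume $\I=\Exh(\phi)$ is pathological. Pathology means no nonpathological lsc submeasure $\psi$ defines $\I$, equivalently $\widehat{\phi}_\sigma$ (which does define a nonpathological lsc submeasure with $\Exh(\widehat{\phi}_\sigma)\supseteq\I$) satisfies $\Exh(\widehat{\phi}_\sigma)\supsetneq\I$. Pick $A\in\Exh(\widehat{\phi}_\sigma)\setminus\I$. Then $\phi\restriction\cP(A)$ has $\widehat{\phi\restriction\cP(A)}_\sigma$ small on tails while $\phi$ itself stays positive on tails, which is exactly the combinatorial shape of $\cS$. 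The Katětov map $\cS\to\I\restriction A$ would be built by matching the finite pathological configurations produced by Theorem~\ref{thm:herer-christensen:pathology-on-finite-set} inside $\phi\restriction\cP(A)$ with the defining combinatorics of $\cS$ (measures on clopen sets of bounded Lebesgue mass).

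The main obstacle I expect is $(3)\Rightarrow(1)$: turning an abstract failure of nonpathology into an explicit Katětov embedding of $\cS$ requires a careful extraction of pathological finite ``cells'' inside $\phi\restriction\cP(A)$ and a matching between these cells and the generators of $\cS$. The direction $(1)\Rightarrow(2)$ also has a delicate step—upgrading a sequence of $\sigma$-measures witnessing $\phi(A\setminus n)$ to one measure $\mu$ with $\mu(A)>0$—but this is a standard ultrafilter/compactness argument, whereas $(3)\Rightarrow(1)$ needs the internal combinatorics of $\cS$.
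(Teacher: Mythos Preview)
The paper does not prove this theorem; it is quoted verbatim from Hru\v{s}\'{a}k's paper (Corollary~5.6 of \cite{hrusak-katetov}) and used as a black box. So there is no ``paper's own proof'' to compare your attempt against. That said, the paper does later give an independent route to the equivalence $(1)\Leftrightarrow(2)$: Theorem~\ref{thm:pathology-matrix} says every nonpathological ideal is an intersection of matrix ideals, and Theorem~\ref{thm:GMV-char} shows that being such an intersection is equivalent to $\I\restriction A\leq_K\I_d$ for all $A\notin\I$. The implication involving $\cS$ is not reproved anywhere in the paper.

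Regarding your sketch, there is a genuine gap in $(1)\Rightarrow(2)$. First, your Kat\v{e}tov direction is garbled: with $f:\omega\to A$, the condition $\I\restriction A\leq_K\I_d$ requires $B\in\I\restriction A\implies f^{-1}[B]\in\I_d$, i.e.\ you must push sets in $\I\restriction A$ forward to density-zero sets, not pull density-zero sets back. Second, and more seriously, a \emph{single} measure $\mu\leq\phi$ with $\mu(A)>0$ cannot detect $\Exh(\phi)$: membership $B\in\Exh(\phi)$ means $\phi(B\setminus n)\to 0$, which says nothing about $\mu(B)$ for a fixed $\mu$ (a $\sigma$-measure always has $\mu(B\setminus n)\to 0$, and a finitely additive ultrafilter limit need not vanish on $\Exh(\phi)$ either). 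The correct mechanism uses a \emph{sequence} of measures $\mu_n\leq\phi$ with $\mu_n$ supported on $A\setminus n$ and $\mu_n(A)\geq\varepsilon$, and builds $f$ so that the density of $f^{-1}[B]$ tracks $\limsup_n\mu_n(B)$; then $B\in\Exh(\phi)$ forces $\mu_n(B)\leq\phi(B\setminus n)\to 0$. This is essentially the matrix-ideal construction in the proof of Theorem~\ref{thm:GMV-char}. Your $(3)\Rightarrow(1)$ is too schematic to assess; the actual argument in Hru\v{s}\'{a}k's paper is where the real work lies, and ``matching pathological cells with generators of $\cS$'' would need to be made precise before it could be checked.
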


Another class of nonpathological ideals can be defined with the aid of infinite matrices of reals.
Namely, if a nonnegative matrix $A=(a_{i,k})_{i,k\in \omega}$ is
\emph{regular}, i.e.
$$
 \sup_{i\in \omega}\sum_{k\in\omega} a_{i,k}<\infty, \ \ 
 \lim_{i\to\infty} \sum_{k\in\omega} a_{i,k} = 1
\text{\ \  and \ }  \lim_{i\to\infty} a_{i,k}=0 \text{ for every $k\in\omega$,}$$
then 
the \emph{matrix summability ideal generated by $A$} (in short \emph{matrix ideal}), 
$$\I(A) =\Exh\left(\sup_{i\in\omega} \mu_{f_i} \right) =\left\{B\subset\N: \limsup_{i\to\infty} \sum_{k\in B} a_{i,k}=0\right\},$$
is  a nonpathological analytic P-ideal
(see \cite[the proof of Proposition~13]{MR3405547}).

Matrix ideals can be used for checking pathologicity of ideals as shown in the following theorem. 
\begin{theorem}[{\cite[Theorems~5.7, 5.14 and Proposition~5.1]{ft-fridy}}]
\label{thm:pathology-matrix}
Every nonpathological ideal can be represented as the intersection of some matrix
summability ideals i.e.~if $\I$ is a nonpathological ideal, then there exists a family $\cM$ of matrix summability ideals with  
$$\I = \bigcap \cM.$$
In particular, every nonpathological ideal is contained in some  matrix ideal.
\end{theorem}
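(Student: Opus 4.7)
The plan is to treat both possibilities uniformly: by definition, a nonpathological ideal $\I$ is either $\Exh(\phi)$ or $\Fin(\phi)$ for some nonpathological lower semicontinuous submeasure $\phi$. The first move is to unpack this hypothesis using Proposition~\ref{prop:properties-of-phi-hat}(\ref{prop:properties-of-phi-hat:all-sets-for-lsc}): since $\phi$ is nonpathological and lsc, in fact $\phi = \widehat{\phi}_\sigma$, so $\phi$ is the pointwise supremum of the family $\cF$ of all $\sigma$-measures $\mu_f\le \phi$. Each such measure has the form $\mu_f(A)=\sum_{n\in A}f(n)$, so $\cF$ is naturally indexed by functions $f:\omega\to[0,\infty]$ with $\mu_f\le\phi$.

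The next step is, for each countable subfamily $\{\mu_{f_i}\}_{i\in\omega}\subseteq\cF$, to manufacture a regular nonnegative matrix $A=(a_{i,k})$ from the rows $a_{i,k}=f_i(k)$. To enforce regularity, I would first rescale the $f_i$ so that $\mu_{f_i}(\omega)\to 1$ and $\sup_i\mu_{f_i}(\omega)<\infty$, and then truncate each row, replacing $f_i$ by $f_i\cdot\chi_{[N_i,\infty)}$ along a fast enough sequence $N_i\to\infty$, so that the column limits $\lim_i a_{i,k}=0$ hold. The truncated measures are still $\le\phi$, and by Lemma/Proposition from Section~\ref{sec:Measures} their supremum is a lsc submeasure dominated by $\phi$. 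The resulting matrix is regular, and its matrix ideal $\I(A)=\Exh(\sup_i\mu_{f_i})$ contains $\Exh(\phi)$, hence contains $\I$ in the P-ideal case; in the $F_\sigma$ case one uses instead that $\phi(B)<\infty$ forces $\mu_{f_i}(B)$ to be bounded, and combines this with the truncation so that $\limsup_i\mu_{f_i}(B)=0$.

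The heart of the argument is the reverse inclusion $\bigcap\cM\subseteq\I$. Given $A\notin\I$, I would produce a single countable subfamily $\{\mu_{f_i}\}\subseteq\cF$ certifying $A\notin\I(A)$. In the analytic P-ideal case, $\limsup_n\phi(A\setminus n)=\varepsilon>0$; using $\phi=\widehat{\phi}_\sigma$ one selects measures $\mu_{f_i}\le\phi$ with $\mu_{f_i}(A\setminus n_i)\ge\varepsilon/2$ along some $n_i\to\infty$, then assembles them into a regular matrix (truncating below $n_i$ to kill column limits) with $\limsup_i\mu_{f_i}(A)\ge\varepsilon/2$. In the $F_\sigma$ case, $\phi(A)=\infty$ lets one choose $\mu_{f_i}\le\phi$ with $\mu_{f_i}(A)\to\infty$ and appropriate truncations so that the normalized rows still satisfy $\limsup_i \sum_{k\in A}a_{i,k}>0$. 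The ``in particular'' clause is immediate: any single member of the nonempty family $\cM$ is a matrix ideal containing $\I=\bigcap\cM$.

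The main obstacle I anticipate is the simultaneous engineering of all three regularity conditions while preserving both domination by $\phi$ and the $\limsup$-witness for $A\notin\I$. Concretely, the bounded-row-sum and row-sum-to-$1$ conditions want measures with unit total mass, column-limits-to-zero wants support pushed out to infinity, and the witness condition wants substantial mass on the test set $A$; balancing these requires a diagonal/truncation construction where the truncation indices $N_i$ are chosen interactively with the enumeration of witnesses, analogous to standard $F_\sigma$/analytic P-ideal constructions. Once this bookkeeping is set up, the verification of $\I=\bigcap\cM$ reduces to the pointwise-supremum description $\phi=\widehat{\phi}_\sigma$ obtained at the outset.
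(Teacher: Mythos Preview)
The paper does not prove this theorem; it is quoted from \cite[Theorems~5.7, 5.14 and Proposition~5.1]{ft-fridy} and used as a black box, so there is no in-paper argument to compare against. Your outline is a plausible reconstruction of how such a proof proceeds, and the opening move---using Proposition~\ref{prop:properties-of-phi-hat}(\ref{prop:properties-of-phi-hat:all-sets-for-lsc}) to write $\phi=\widehat{\phi}_\sigma$ and hence realize $\phi$ as a supremum of $\sigma$-measures $\mu_f\le\phi$---is exactly the right starting point.

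One place where your sketch is thin and would need tightening in the $F_\sigma$ case: for $B\in\Fin(\phi)$ the bound $\mu_{f_i}(B)\le\phi(B)<\infty$ gives only boundedness, not $\limsup_i\sum_{k\in B}a_{i,k}=0$ after normalization. You need the normalizing constants to blow up, which means arranging that the witnessing measures $\mu_{f_i}\le\phi$ have $\mu_{f_i}(A)\to\infty$ \emph{and} carry essentially all their mass on $A$ (e.g., restrict each $\mu_{f_i}$ to a finite subset of $A$ where it already takes a large value, then normalize). With that choice, row sums tend to $1$, $\sum_{k\in A}a_{i,k}\to 1$, and for any $B$ with $\phi(B)<\infty$ one gets $\sum_{k\in B}a_{i,k}\le\phi(B)/\mu_{f_i}(A)\to 0$, so the matrix ideal both contains $\Fin(\phi)$ and omits $A$. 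The remaining bookkeeping you describe (truncating below $N_i\to\infty$ to force column limits to zero, rescaling for row sums) is routine once this is in place.
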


Let us finish this section with examples  of pathological ideals.
In \cite[Theorem~4.12]{ft-mazur}, the authors constructed an $F_\sigma$ P-ideal $\I$ which is not contained in any matrix summability ideal, and consequently (by Theorem~\ref{thm:pathology-matrix}) the ideal $\I$ is pathological.
Another example of a pathological ideal can be found in \cite[Section~3.2.1]{MR4797308}. We would  emphasize that both of these examples rely heavily on the construction  of an $F_\sigma$ ideal which is not contained in any summable  ideal (given by K.~Mazur~\cite[Theorem~1.9]{mazur}). 
Using Theorem~\ref{thm:nonpath-intersection-of-summable}, one can see that the ideal constructed by Mazur  is pathological, hence his ideal seems to be the first pathological ideal constructed ever.


\section{Intersections of matrix ideals}
\label{sec:matrix}

The following theorem shows that item (\ref{thm:characterization-of-nonpath-of-analytic-P-ideals:item}) of Theorem~\ref{thm:characterization-of-nonpath-of-analytic-P-ideals}
characterizes ideals that can be represented as the intersections of matrix summability ideals in the realm of all ideals (not necessarily analytic P-ideals).

\begin{theorem}\label{thm:GMV-char}
\label{thm:matrix-intersection-via-KAT-and-Z}
An ideal  $\I$ is equal to the intersection of a family of matrix summability ideals if and only if $\I\upharpoonright A\leq_K \I_d$ for  every $A\not\in\I$.
\end{theorem}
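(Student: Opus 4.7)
The plan is to prove both directions using the characterization of nonpathological analytic P-ideals in Theorem~\ref{thm:characterization-of-nonpath-of-analytic-P-ideals}, together with a direct construction that pushes forward the density matrix along the Katětov reduction.

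For the forward implication, suppose $\I = \bigcap_{\alpha \in \Lambda} \I(M_\alpha)$ where each $M_\alpha$ is a regular matrix, and fix $A \notin \I$. Then there is some $\alpha$ with $A \notin \I(M_\alpha)$. Since $\I(M_\alpha)$ is a nonpathological analytic P-ideal, Theorem~\ref{thm:characterization-of-nonpath-of-analytic-P-ideals} yields a function $f : \omega \to A$ witnessing $\I(M_\alpha) \restriction A \leq_K \I_d$. From $\I \subseteq \I(M_\alpha)$ we obtain $\I \restriction A \subseteq \I(M_\alpha) \restriction A$, so the very same $f$ witnesses $\I \restriction A \leq_K \I_d$.

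For the converse, it is enough to show that for every $B \notin \I$ there is a regular matrix $M_B$ with $\I \subseteq \I(M_B)$ and $B \notin \I(M_B)$, since this gives $\I = \bigcap_{B \notin \I} \I(M_B)$. Fix $B \notin \I$ and pick $f_B : \omega \to B$ witnessing $\I \restriction B \leq_K \I_d$, that is, $f_B^{-1}[C] \in \I_d$ for every $C \in \I \restriction B$. Set
$$m_{i,k} = \frac{|f_B^{-1}(\{k\}) \cap i|}{i}$$
for $i \geq 1$ (the $0$-th row can be set to meet regularity trivially). Since the range of $f_B$ is contained in $B$, one has $\sum_{k} m_{i,k} = |\omega \cap i|/i = 1$. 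Moreover, for each fixed $k$, the set $\{k\} \cap B$ lies in $\I \restriction B$ (as $\I$ contains finite sets), hence $f_B^{-1}(\{k\}) \in \I_d$ and $\lim_i m_{i,k} = 0$; thus $M_B$ is regular. For $D \in \I$, we have $D \cap B \in \I \restriction B$ and $f_B^{-1}[D] = f_B^{-1}[D \cap B] \in \I_d$, so
$$\limsup_{i \to \infty} \sum_{k \in D} m_{i,k} = \limsup_{i \to \infty} \frac{|f_B^{-1}[D] \cap i|}{i} = 0,$$
which gives $D \in \I(M_B)$. Finally, $\sum_{k \in B} m_{i,k} = |f_B^{-1}[B] \cap i|/i = 1$ for all $i \geq 1$, so $B \notin \I(M_B)$.

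The main conceptual step is spotting the right matrix, namely the pushforward along $f_B$ of the uniform probability measures that define $\I_d$; once this is in place, the verifications of regularity, of the inclusion $\I \subseteq \I(M_B)$, and of $B \notin \I(M_B)$ are routine. The only nontrivial point to check is that $f_B$-preimages of singletons are in $\I_d$, which follows automatically from the Katětov reduction together with the standing assumption that every ideal in the second part contains the finite sets.
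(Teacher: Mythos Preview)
Your proof is correct and essentially matches the paper's. The backward direction is identical: both construct the matrix $b_{i,k}=|f^{-1}[\{k\}]\cap i|/i$ as the pushforward of the uniform measures along the Kat\v{e}tov witness. For the forward direction, the paper appeals to external results from \cite{tryba-pathology}, whereas you invoke Theorem~\ref{thm:characterization-of-nonpath-of-analytic-P-ideals} directly to get $\I(M_\alpha)\restriction A\leq_K\I_d$ and then use the inclusion $\I\restriction A\subseteq\I(M_\alpha)\restriction A$; this is a clean and self-contained alternative that relies only on results already stated in the paper.
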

\begin{proof}
`($\Rightarrow$)': Follows from \cite[Corollary~3.4]{tryba-pathology} and the proof of \cite[Theorem~3.8]{tryba-pathology} as noted by remarks under \cite[Corollary~3.9]{tryba-pathology}.

`($\Leftarrow$)': Take $A\not\in \I$. Let $f:\N\rightarrow A$ be such that for every $C\in\I\upharpoonright A$ we have $f^{-1}[C]\in \I_d$.

We will finish the proof by constructing a regular matrix $B=(b_{i,k})$ such that $A\not\in \I(B)$ and $\I\subseteq \I(B)$.  
For every $i,k\in\N$ put
$$b_{i,k}=\frac{|f^{-1}[\{k\}]\cap [1,i]|}{i} $$
Clearly, $b_{i,k}\geq 0$ for all $i,k\in\N$ and $\sum_{k=1}^\infty b_{i,k}=1$ for each $i\in\N$. Moreover, since for each $k\in\N$ we have $\{k\}\in \I\upharpoonright A$, we get $f^{-1}[\{k\}]\in\I_d$. Therefore, for each $k\in\N$ we have
$$\lim_{i\to\infty} \frac{|f^{-1}[\{k\}]\cap [1,i]|}{i}=0, $$
hence $\lim_{i\to\infty}b_{i,k}=0$. Thus, the matrix $B$ is regular.

Observe that 
$$\sum_{k\in C} b_{i,k}=\frac{|f^{-1}[C]\cap [1,i] |}{i} $$
for every $i\in \N$ and $C\subseteq\N$. 

Obviously, $f^{-1}[A]=\N$. It follows that 
$$\sum_{k\in A}b_{i,k}= \frac{|f^{-1}[A]\cap[1,i] |}{i}=\frac{|[1,i]|}{i}=1 $$
for every $i\in\N$, thus $A\not\in\I(B)$.

Moreover, since for every $C\in\I\upharpoonright A$ we have $f^{-1}[C]\in \I_d$, it follows that for every such $C$ we obtain
$$\lim_{i\to\infty}\frac{|f^{-1}[C]\cap [1,i]|}{i}=0,$$
thus $\lim_{i\to\infty} \sum_{k\in C} b_{i,k}=0$, hence $C\in \I(B)$. Therefore,  $\I\upharpoonright A\subseteq \I(B)$. Since $\I\subseteq \I\upharpoonright A\cup\cP(\N\setminus A)$ and $\N\setminus A\in\I(B)$, we obtain $\I\subseteq \I(B)$.
\end{proof}

In \cite[Question~3.13]{martinez2022pathology}, the authors asked whether the Solecki ideal $\cS$ is pathological. Below we argue that the answer is positive, which gives another  example of a pathological $F_\sigma$ ideal
(in \cite[Section~3.3]{MR4797308}, the authors announced that the same result was independently obtained by Figueroa and Hru\v{s}\'{a}k).

\begin{corollary}
        The Solecki ideal is  pathological.
\end{corollary}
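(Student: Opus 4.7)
The plan is to derive this immediately from Theorem~\ref{thm:characterization-of-nonpath-of-analytic-P-ideals}, which characterizes nonpathological analytic P-ideals via the non-Katětov-reducibility of the Solecki ideal itself. The key observation is purely formal: the characterization, when applied to $\cS$ as the ideal being tested, becomes self-referential and fails trivially.

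First I would recall that $\cS$ is an analytic P-ideal (as recorded in \cite[Section~3.6]{MR2777744}); this is the only background fact required. Then I would apply the equivalence of items (1) and (3) of Theorem~\ref{thm:characterization-of-nonpath-of-analytic-P-ideals} with $\I = \cS$: nonpathology of $\cS$ would amount to the statement that $\cS \not\leq_K \cS\restriction A$ for every $A \notin \cS$. Taking $A = \omega$ (which lies outside $\cS$ because $\cS$ is a proper ideal), we have $\cS\restriction \omega = \cS$, and the identity function $\mathrm{id}\colon \omega \to \omega$ is a witness that $\cS \leq_K \cS$. This contradicts the required condition, so $\cS$ cannot be nonpathological.

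There is no substantive obstacle here: once Theorem~\ref{thm:characterization-of-nonpath-of-analytic-P-ideals} is at our disposal, the argument is a one-line application of the characterization to the self-referential case, together with the trivial fact that every ideal is Katětov-below itself via the identity map. The real content, of course, lies in Hru\v{s}\'{a}k's characterization itself, which we are invoking as a black box.
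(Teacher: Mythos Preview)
Your argument has a genuine gap: the Solecki ideal $\cS$ is \emph{not} a P-ideal. It is a tall $F_\sigma$ ideal (this is what \cite[Section~3.6]{MR2777744} actually records), and Theorem~\ref{thm:characterization-of-nonpath-of-analytic-P-ideals} applies only to analytic P-ideals, so you cannot instantiate it with $\I=\cS$. Were $\cS$ a P-ideal, your self-referential one-liner would indeed work and the question of its pathology would never have been open; the whole point of Hru\v{s}\'{a}k's characterization is that $\cS$ sits outside the class it governs.

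The paper's proof circumvents this by staying within the $F_\sigma$ framework and routing through $\I_d$ instead. If $\cS$ were nonpathological (as an $F_\sigma$ ideal), then by Theorem~\ref{thm:pathology-matrix} it would be an intersection of matrix ideals, whence by the new Theorem~\ref{thm:matrix-intersection-via-KAT-and-Z} one gets $\cS\restriction A\leq_K\I_d$ for every $A\notin\cS$; in particular $\cS\leq_K\I_d=\I_d\restriction\omega$, which via item~(3) of Theorem~\ref{thm:characterization-of-nonpath-of-analytic-P-ideals} (now legitimately applied to the analytic P-ideal $\I_d$) would make $\I_d$ pathological---a contradiction. The step your attempt skips is precisely Theorem~\ref{thm:matrix-intersection-via-KAT-and-Z}, which is what lets one pass the Kat\v{e}tov condition from the $F_\sigma$ ideal $\cS$ over to a genuine analytic P-ideal where Hru\v{s}\'{a}k's criterion applies.
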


\begin{proof}
It is known that the Solecki ideal $\cS$ is a tall $F_\sigma$ ideal (see e.g. \cite[Section~3.6]{MR2777744}). Suppose for the sake of contradiction that $\cS$ is nonpathological.
Then  $\cS$ would be the intersection of some family of matrix ideals
by Theorem~\ref{thm:pathology-matrix}. 
Using Theorem~\ref{thm:matrix-intersection-via-KAT-and-Z}, we obtain 
 $\cS\restriction A\leq_K\I_d$ for every $A\notin \cS$.
Thus  $\I_d$ is pathological by Theorem~\ref{thm:characterization-of-nonpath-of-analytic-P-ideals}, a contradiction.
\end{proof}


\section{Generalized density ideals}
\label{sec:GDI}

If $(\mu_n)_{n\in \omega}$ is  a sequence of measures on $\omega$ which are  concentrated on  finite pairwise disjoint intervals  $I_n\subseteq \omega$ (i.e.~$\{i\in \omega:\mu_n(\{i\})>0\}\subseteq I_n$ for each $n\in \omega$ and $I_n\cap I_m=\emptyset$ for $n\neq m$), then 
the \emph{density ideal generated by $(\mu_n$)}, 
$$\I_{\mu_n}=\Exh\left(\sup_{n\in \omega}\mu_n\right)=\left\{A\subseteq\omega: \limsup_{n\to\infty}\mu_n(A)=0\right\},$$
is a nonpathological analytic P-ideal (\cite{MR1711328}).
If we replace measures $\mu_n$ with submeasures $\phi_n$ in the above definition, we obtain the \emph{generalized density ideal generated by $(\phi_n)$} which are also nonpathological analytic P-ideal (\cite{MR1988247}).

In \cite{MR4124855}, the authors are interested in the following modification of density ideals.
Let $\cF$ be a family of finite subsets of $\omega$ and $f:\omega\to (0,\infty)$ be a sequence of positive reals. 
For each $F\in \cF$, we define a measure on $\omega$ 
$$\mu_{f,F}(A)= \sum_{i\in A\cap F} f(i),$$
and associated nonpathological analytic P-ideal given by 
$$\I_{f,\cF} = \exh\left(\sup_{F\in\cF} \mu_{f,F}\right).$$

The following theorem solves \cite[Problem~4.3]{MR4124855}.

\begin{theorem}
The ideal 
 of all sets of  exponential density  zero, 
$$\I_\varepsilon = \left\{A\subseteq\omega: \limsup_{n\to\infty} \frac{\ln |A\cap n|}{\ln n} =0 \right\},$$ 
is a nonpathological analytic P-ideal that is not of the form $\I_{f,\cF}$.
\end{theorem}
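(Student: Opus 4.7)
The theorem has two parts.

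For the first part, I would exhibit the lower semicontinuous submeasure
\[
\phi(A)=\sup_{n\geq 2}\frac{\ln(1+|A\cap n|)}{\ln(1+n)}.
\]
Subadditivity follows from $(1+a)(1+b)\geq 1+a+b$, lower semicontinuity is immediate, and a tail estimate identifies $\I_\varepsilon=\Exh(\phi)$: if $|A\cap n|\leq n^\delta$ for $n\geq N_0$ then $\phi(A\setminus N_0)\leq \delta+o(1)$, while $A\notin\I_\varepsilon$ keeps $\phi(A\setminus N)$ bounded away from $0$. For nonpathology, each level $\phi_n(A)=\ln(1+|A\cap n|)/\ln(1+n)$ is itself nonpathological: the measure $\mu_{A,n}$ assigning weight $\phi_n(A)/|A\cap n|$ to each $i\in A\cap n$ satisfies $\mu_{A,n}(B)\leq\phi_n(B)\leq\phi(B)$ (using that $x\mapsto\ln(1+x)/x$ is decreasing) with $\mu_{A,n}(A)=\phi_n(A)$. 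Taking the supremum over $n$ gives $\widehat\phi(A)\geq\sup_n \phi_n(A)=\phi(A)$, so $\phi$ is nonpathological, and Theorem~\ref{thm:exh-is-analytic-p} identifies $\I_\varepsilon$ as an analytic P-ideal.

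For the second part, suppose for contradiction $\I_\varepsilon=\I_{f,\cF}$ and set $\psi=\sup_{F\in\cF}\mu_{f,F}$. Standard reductions give $\bigcup\cF=\omega$ and $f(n)\to 0$: tallness of $\I_\varepsilon$ rules out infinite $\{n\in\bigcup\cF:f(n)\geq\delta\}$ (a tall subset $Y\in\I_\varepsilon$ would force $\psi(Y\setminus N)\geq\delta$ for arbitrarily large $N$, contradicting $Y\in\Exh(\psi)$), and $\omega\setminus\bigcup\cF\in\I_\varepsilon$ absorbs the complement. From $\psi\leq\mu_f$ we deduce $\I_f\subseteq\I_\varepsilon$, which in turn forces $\sup_{n\geq N}f(n)=N^{-o(1)}$: if $\sup_{n\geq N_k}f(n)\leq N_k^{-\alpha}$ along a rapidly increasing sequence $N_k$ for some $\alpha>0$, the union $A=\bigcup_k[N_k,N_k+\lfloor N_k^{\alpha/2}\rfloor]$ would satisfy $\sum_A f<\infty$ yet $|A\cap(N_k+N_k^{\alpha/2})|\geq N_k^{\alpha/2}$, putting $A\in\I_f\setminus\I_\varepsilon$.

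Since $\omega\notin\Exh(\psi)$, there is $c>0$ with $\psi(\omega\setminus N)\geq c$ for every $N$. Inductively choose $N_0<N_1<\cdots\to\infty$, pick $F_i\in\cF$ with $\mu_{f,F_i}(\omega\setminus N_i)\geq c/2$, and greedily select a minimal $G_i\subseteq F_i\cap[N_i,\infty)$ with $\sum_{n\in G_i}f(n)\geq c/4$; set $B=\bigcup_i G_i$. For each $N$, choosing $i$ with $N_i\geq N$ gives $\psi(B\setminus N)\geq \mu_{f,F_i}(G_i)\geq c/4$, so $B\notin\Exh(\psi)=\I_{f,\cF}$. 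On the other hand, choosing $N_{i+1}$ super-exponentially in $|G_1|+\cdots+|G_i|$ yields $\ln|B\cap n|/\ln n\to 0$, putting $B\in\I_\varepsilon=\I_{f,\cF}$, a contradiction. The main obstacle is the cardinality bound $|G_i|=N_i^{o(1)}$ needed to control $|B\cap n|$: the lower bound $|G_i|\geq (c/4)/\sup_{n\geq N_i}f(n)=N_i^{o(1)}$ is immediate from the sub-polynomial decay of $\sup f\restriction[N_i,\infty)$, but the matching upper bound requires choosing $F_i$ with sufficiently concentrated $f$-mass, which is the delicate step and leverages the interaction between $\cF$ and the forced decay of $f$ imposed by the assumed equality $\I_{f,\cF}=\I_\varepsilon$.
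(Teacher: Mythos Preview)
Your first part is correct and in fact more self-contained than the paper's treatment, which simply cites an external result that $\I_\varepsilon$ is a matrix summability ideal. Your explicit submeasure and the concavity argument showing $\mu_{A,n}\leq\phi$ (via the fact that $x\mapsto\ln(1+x)/x$ is decreasing) give a clean direct verification of nonpathology.

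The second part, however, has a genuine gap that you yourself flag. To place $B=\bigcup_i G_i$ in $\I_\varepsilon$ you need an upper bound of the form $|G_i|\leq(\max G_i)^{o(1)}$, but the sub-polynomial decay you derive for $\sup_{n\geq N}f(n)$ only guarantees that \emph{some} point beyond $N_i$ has $f$-value not too small; it says nothing about the $f$-values on the particular set $F_i\cap[N_i,\infty)$ from which $G_i$ must be drawn. If every $F\in\cF$ carrying mass $\geq c/2$ past $N_i$ happens to be a long set on which $f$ is uniformly tiny, then $|G_i|$ is forced to be comparable to $|F_i|$, and nothing you have proved bounds that. Your closing sentence (``leverages the interaction between $\cF$ and the forced decay of $f$'') names no mechanism, so the argument stops exactly at the decisive step. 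The ``lower bound'' you note is not a step toward the proof; it merely confirms that the obstacle is real.

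The paper takes a rather different route for this half. It first uses that $\I_\varepsilon$ is a generalized density ideal, localized to disjoint finite intervals $I_n$, and that it is not $F_\sigma$, to obtain $A\notin\I_\varepsilon$ with $\limsup_n\varphi(A\cap I_n)<\infty$. After passing to blocks $B_k=A\cap F_{n_k}\cap I_{n_k}$, on which $\varphi$ coincides with the \emph{measure} $\mu_{f,F_{n_k}}$, it applies a square-root trick: partition each $B_k$ into $\lfloor\sqrt{|B_k|}\rfloor$ intertwined pieces of size roughly $\sqrt{|B_k|}$ and select one of smallest measure. This halves the exponential density while forcing $\varphi(C\cap I_n)\to 0$. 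The trick works precisely because additivity inside a single $F\in\cF$ lets one trade cardinality against measure in a controlled way---exactly the balancing your greedy selection cannot achieve.
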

\begin{proof}
The ideal $\I_\varepsilon$ is a matrix summability ideal by \cite[Theorem~5.20]{ft-fridy}, so it is a nonpathological analytic P-ideal. 
Then it follows from \cite[Proposition~4.6]{tryba-gendensity} that $\I_\varepsilon$ is  a generalized density ideal, so there are pairwise disjoint intervals $I_n$ and submeasures $\psi_n:\cP(I_n)\to[0,\infty)$ such that 
$$\I_\varepsilon=\left\{A\subseteq\omega: \limsup_{n\to\infty} \psi_n(A\cap I_n)=0\right\}.$$

Assume that $\I_\varepsilon$ is of the form $\I_{f,\cF}$ for some $\cF\subseteq [\omega]^{<\omega}$ and $f:\omega\to  (0,\infty)$. Let  $\phi = \sup_{F\in\cF} \mu_{f,F}$. Since $\I_\varepsilon$ is  a generalized density ideal, we can notice that $$\I_\varepsilon=\left\{A\subseteq\omega: \lim_{n\to\infty} \varphi(A\cap I_n)=0\right\}.$$  
We also know that $\lim_{n\to\infty}\varphi(\{n\})=0$, because $\I_\varepsilon$ is tall.

Now, note that there has to be $A\not\in\I_\varepsilon$ such that $\limsup_{n\to\infty} \varphi(A\cap I_n)<\infty$. Otherwise, we would have $\I_\varepsilon=\{A\subseteq\omega: \varphi(A)<\infty\}$, which would make $\I_\varepsilon$ an $F_\sigma$ ideal, a contradiction with the fact that no tall generalized density ideal is $F_\sigma$ (see \cite[Section~3]{MR2254542}). 

Therefore,  there exists $A\not\in\I_\varepsilon$,  $\alpha>0$ and injective sequences $(n_k)\subseteq \omega$, $(F_{n_k})\subseteq \cF$ such that $\lim_{k\to\infty} \varphi(A\cap F_{n_k}\cap I_{n_k})=\alpha$. Let $B_k=A\cap F_{n_k} \cap I_{n_k}$ and $B=\bigcup_{k\in\omega} B_k$. Clearly, $B\not\in\I_\varepsilon$ as $\limsup_{n\to\infty} \varphi(B\cap I_n)=\alpha>0$. Since $\lim_{n\to\infty}\varphi(\{n\})=0$, we also get $\lim_{k\to\infty} |B_k|=\infty$. What is more, for every infinite $Z\subseteq\omega$ we have $\bigcup_{k\in Z} B_k \not\in\I_\varepsilon$. Therefore, we may assume (by trimming down $B_k$ if necessary) that $\liminf_{k\to\infty} \ln (|B_k|)/\ln {(\max B_k)}=\beta>0$.

Next, we divide every $B_k$ into $\lfloor \sqrt{|B_k|}\rfloor$ disjoint, intertwined parts $C_i^k$ (i.e. $| |C_i^k \cap n|- |C_j^k \cap n||\leq 1$ for all $i,j,n$), each with cardinality around $\sqrt{|B_k|}$. Since $B_k\subseteq F_{n_k}$, for every $X\subseteq B_k$ we have $\varphi(X)=\mu_{f,F_{n_k}}(X)$. Therefore, for every $k$, there has to be at least one $i_k$ with 
$$\varphi(C_{i_k}^k)\leq \frac{\varphi(B_k)}{\lfloor \sqrt{|B_k|}\rfloor},$$
thus for $C=\bigcup_{k\in\omega}C_{i_k}^k$ we obtain 

$$\limsup_{n\to\infty} \varphi(C\cap I_n)=\limsup_{k\to\infty} \varphi(C_{i_k}^k)\leq \limsup_{k\to\infty}\frac{\varphi(B_k)}{\lfloor \sqrt{|B_k|}\rfloor} =0,$$
because  $\limsup_{k\to\infty} \varphi(B_k)\leq \alpha$ and $\lim_{k\to\infty} \lfloor \sqrt{|B_k|}\rfloor=\infty$. 

On the other hand, since for every $k\in\omega$, 
$$|C_{i_k}^k |\geq  \left\lfloor\frac{|B_k |}{\lfloor \sqrt{|B_k| }\rfloor}\right\rfloor\geq \lfloor\sqrt{|B_k|}\rfloor$$
then for every $n=\max B_k$ we have
$$\frac{\ln (|C\cap n|)}{\ln n} \geq  \frac{\ln |C_{i_k}^k|}{\ln n}\geq  \frac{\ln (\lfloor\sqrt{|B_k|}\rfloor)}{\ln n} \approx \frac{\ln (|B_k|)}{2\ln n},$$ 
hence $\limsup_{n\to\infty}\ln (|C\cap n|)/\ln n\geq \beta/2$, thus $C\not \in \I_\varepsilon$, which leads to a contradiction.
\end{proof}


\section{\texorpdfstring{$F_\sigma$}{F-sigma} ideals and the degree of pathology}
\label{sec:Fsigma-and-degree-of-pathol}

If $\phi$ is a lower semicontinuous  submeasure, then $\widehat{\phi}_\sigma$ is a nonpathological lower semicontinuous  submeasure  (by Proposition~\ref{prop:properties-of-phi-hat}(\ref{prop:properties-of-phi-hat:sigma-hat-is-lsc})(\ref{prop:properties-of-phi-hat:all-sets-for-lsc})).
If $P_\sigma(\phi)<\infty$ then
$$\widehat{\phi}_\sigma\leq \phi\leq P_\sigma(\phi)\cdot \widehat{\phi}_\sigma.$$
If $\phi$ is a lower semicontinuous  submeasure with $P(\phi)<\infty$, then 
$P_\sigma(\phi)=P(\phi)$ (by Proposition~\ref{prop:basic-prop-of-P-phi}(\ref{prop:basic-prop-of-P-phi:nonpathology-for-lsc})), so
$$\Exh(\phi)=\Exh\left(\widehat{\phi}_\sigma\right) \text{\ \ and\ \ } {\fin}(\phi)={\fin}\left(\widehat{\phi}_\sigma\right),$$
and consequently, both $\Exh(\phi)$ and ${\fin}(\phi)$ are  nonpathological in this case (even though $\phi$ may be pathological).

In \cite[Section~3.1]{MR4797308}, the authors 
constructed a lower semicontinuous submeasure $\phi$ such that $P(\phi)=\infty$ and  ${\fin}(\phi)$ is nonpathological. Then they asked a question \cite[Question~3.6]{MR4797308} whether  one can find $\phi$ which additionally has $P(\phi\restriction \cP(A))<\infty$ for each $A\in {\fin}(\phi)$. The following proposition answers their question in the positive.

\begin{proposition} 
\label{prop:pathologica-submeasure-with-infinite-degree-of-pathology-but-with-nonpathological-ideal}
There exists a pathological lower semicontinuous  submeasure $\phi$ on $\omega$ such that $P(\phi)=\infty$ and $\Exh(\phi)={\fin}(\phi)={\fin}$, thus ${\fin}(\phi)$ is a nonpathological $F_\sigma$  P-ideal. Moreover, $P(\phi\upharpoonright \cP(A))<\infty$ for all $A\in{\fin}(\phi)$.
\end{proposition}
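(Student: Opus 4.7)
The plan is to build $\phi$ as a sum $\phi=\psi+\nu$ of two lsc submeasures: a pathological piece $\psi$ obtained by scaling together Herer–Christensen submeasures concentrated on a partition of $\omega$ into finite blocks, and the counting measure $\nu(A)=|A|$, whose role is to force $\phi(A)=\infty$ on every infinite set. The main tension is that adding a measure normally kills pathology; the argument hinges on the fact that $\nu$ contributes only mass $1$ per point, while the scaling of the Herer–Christensen parts can be taken large enough that the residual pathology survives.

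Concretely, for each $n\geq 1$ apply Theorem~\ref{thm:herer-christensen:pathology-on-finite-set} with $\varepsilon=1/n$ to obtain a finite interval $I_n$ of some size $m_n$ (necessarily $m_n\geq n$) and a submeasure $\phi_n$ on $I_n$ with $\phi_n(I_n)=1$ and $\widehat{\phi_n}(I_n)\leq 1/n$, arranging the $I_n$ to partition $\omega$. Then set
\[
\psi(A)=\sum_{n\geq 1} n\,m_n\,\phi_n(A\cap I_n)\qquad\text{and}\qquad \phi(A)=\psi(A)+|A|.
\]
Both summands are lsc submeasures, so $\phi$ is a lsc submeasure. Since $|A|=\infty$ exactly when $A$ is infinite and $\psi$ is finite on every finite set, one obtains $\fin(\phi)=\fin$; since $\phi$ is finite on every finite set we also get $\Exh(\phi)=\fin(\phi)=\fin$. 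Because $\fin=\fin(\nu)$ with $\nu$ a $\sigma$-measure, this ideal is a nonpathological $F_\sigma$ P-ideal.

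The heart of the argument is the bound $\widehat{\phi}(I_n)\leq 2m_n$, from which $P(\phi)=\infty$ will follow. To prove it, fix a measure $\mu\leq\phi$ and split $I_n=P\sqcup N$ with $P=\{i\in I_n:\mu(\{i\})>1\}$; define $\lambda$ on $\omega$ by $\lambda(\{i\})=\mu(\{i\})-1$ for $i\in P$ and $\lambda=0$ elsewhere. Then for every $B\subseteq\omega$,
\[
\lambda(B)=\mu(B\cap P)-|B\cap P|\leq \phi(B\cap P)-|B\cap P|=\psi(B\cap P)\leq\psi(B),
\]
so $\lambda\leq\psi$, and therefore $\mu(P)-|P|=\lambda(I_n)\leq\widehat{\psi}(I_n)$; since $\mu(N)\leq|N|$ trivially, adding gives $\mu(I_n)\leq m_n+\widehat{\psi}(I_n)$. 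Next, any measure $\rho\leq\psi$ satisfies $\rho(B)\leq nm_n\,\phi_n(B)$ for $B\subseteq I_n$, so $\rho|_{I_n}/(nm_n)$ is a measure on $I_n$ dominated by $\phi_n$, whence $\rho(I_n)\leq nm_n\,\widehat{\phi_n}(I_n)\leq m_n$; thus $\widehat{\psi}(I_n)\leq m_n$ and $\widehat{\phi}(I_n)\leq 2m_n$. Since $\phi(I_n)=nm_n+m_n$, we have $\phi(I_n)/\widehat{\phi}(I_n)\geq (n+1)/2$, so $P(\phi)=\infty$.

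For the final condition, fix any finite $A\subseteq\omega$. Since $\phi$ is lsc, for every $B\subseteq A$ with $\phi(B)>0$ subadditivity yields $i\in B$ with $\phi(\{i\})>0$; the Dirac measure at $i$ of mass $\phi(\{i\})$ witnesses $\widehat{\phi}(B)\geq\phi(\{i\})>0$. By the standard extension/restriction correspondence (measures dominated by $\phi\upharpoonright\cP(A)$ extend by zero to measures on $\omega$ dominated by $\phi$, and conversely by restriction), $\widehat{\phi\upharpoonright\cP(A)}(B)=\widehat{\phi}(B)$, so every ratio in the supremum defining $P(\phi\upharpoonright\cP(A))$ is finite, and the supremum over the finite power set $\cP(A)$ of finitely many finite numbers is finite.
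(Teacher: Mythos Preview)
Your proof is correct and takes a genuinely different route from the paper's. The paper invokes Mazur's combinatorial lemma (\cite[Lemma~1.8]{mazur}) to obtain, on each block $K_n$, the covering-number submeasure $\Phi_n(A)=\min\{|\cS|:\cS\subseteq\cS_n,\ A\subseteq\bigcup\cS\}$; since $\Phi_n(A)\geq 1$ whenever $A\cap K_n\neq\emptyset$, the sum $\phi=\sum_n\Phi_n(\cdot\cap K_n)$ already satisfies $\fin(\phi)=\fin$ without any additive correction, and the probabilistic half of Mazur's lemma yields $\widehat\phi(K_n)\leq 2$ directly. Your construction instead recycles the Herer--Christensen blocks from Theorem~\ref{thm:herer-christensen:pathology-on-finite-set}, which do not satisfy such a pointwise lower bound, and compensates by adding the counting measure; the price is the extra splitting argument $I_n=P\sqcup N$ and the transfer $\mu\mapsto\lambda$ needed to recover the bound $\widehat\phi(I_n)\leq 2m_n$. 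The trade-off is that the paper's argument is shorter once Mazur's lemma is granted, while yours stays entirely within the toolkit the paper has already set up in Section~\ref{sec:examples-of-pathol} and does not import a new combinatorial input. Both arguments close the ``moreover'' clause the same way, by observing that on a finite $A$ every nonempty $B\subseteq A$ has $\widehat\phi(B)>0$ and the supremum is over finitely many finite ratios.
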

\begin{proof}
By \cite[Lemma 1.8]{mazur} for every $n>0$ there exists a finite set $K_n$ and a family $\cS_n\subseteq\cP(K_n)$ such that:
\begin{enumerate}
\item $\forall A_1,\ldots, A_n \in \cS_n \ (A_1\cup \ldots \cup A_n \not = K_n)$;
\item if $p$ is a probability distribution on $K_n$ then there exists  $A\in \cS_n$ such that $p(A)\geq 1/2$.
\end{enumerate}
Assume that $\{K_n:n\in\omega\}$ is a partition of $\N$ into intervals and define $\Phi_n: \cP(K_n) \to [0,\infty)$ by
$$\Phi_n(A)=\min \left\{ |\cS|: \cS \subseteq \cS_n \textrm{ and } A\subseteq \bigcup \cS \right\} $$
for any $A\subseteq K_n$. Notice that $\Phi_n(K_n)>n$. 

Define $\phi: \cP(K_n) \to [0,\infty)$ by $\phi(A)=\sum_{n=1}^\infty \phi_n(A\cap K_n)$. Clearly, if $A$ is finite then $\Phi_n(A)=0$ for all but finitely many $n$ (thus $\phi(A)<\infty$) and if $A$ is infinite then $\Phi_n(A)\geq 1$ for infinitely many $n$ (thus $\phi(A)=\infty$). Therefore, $\Exh(\phi)={\fin}(\phi)={\fin}$.

To see that $P(\phi)=\infty$, first assume that there exists a measure $\mu:\cP(K_n) \to [0,\infty)$ such that $\mu(K_n)\geq 2$. 
Then there exists $A\in \cS_n$ such that $\mu(A)>1$, because for the probability measure $p$ given by $p(B)=\mu(B)/\mu(K_n)$ one has to find $A\in \cS_n$ such that $p(A)>1/2$, thus $\mu(A)>1$. 
On the other hand, for every $A\in\cS_n$ we have $\phi_n(A)=1$, thus $\phi(A)=1$, hence $\mu\not\leq\phi$.
It follows that for every $\mu\leq \phi$ we have $\mu(K_n)<2$. Therefore, for any measure $\mu\leq \phi$ we get
$$\frac{\phi(K_n)}{\mu(K_n)}>\frac{n}{2}, $$
for all $n$, thus $P(\phi)=\infty$.

To prove the `moreover' part, notice that for $A\in{\fin}(\phi)$, there are only finitely many sets $B\subseteq A$ for which $(\phi\upharpoonright \cP(A)) (B)>0$. Since $(\phi\upharpoonright \cP(A))(A)=\phi(A)<\infty$ (thus $(\phi\upharpoonright \cP(A))(B)<\infty$ for every $B\subseteq A$), it is clear that $P_{\fin}(\phi\upharpoonright \cP(A))<\infty$.
\end{proof}

We know  that nonpathological $F_\sigma$ ideals can be represented as intersections of matrix ideals (see Theorem~\ref{thm:pathology-matrix}). We can possibly strengthen it a bit.
\begin{theorem}
\label{thm:nonpath-intersection-of-summable}
If $\I$ is a nonpathological $F_\sigma$ ideal then it can be represented as the intersection of summable ideals.
In particular, every nonpathological $F_\sigma$ ideal is contained in some summbale ideal.
\end{theorem}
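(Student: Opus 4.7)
The plan is to fix a nonpathological lsc submeasure $\phi$ with $\fin(\phi)=\I$, express $\phi$ via dominated $\sigma$-measures, and show that the corresponding summable ideals intersect to $\I$. Since $\I$ is a proper ideal, $\phi(\omega)=\infty$, and since $\I$ is an ideal on $\omega$, $\phi$ is finite on finite sets. Being lsc and nonpathological, Proposition~\ref{prop:properties-of-phi-hat}(\ref{prop:properties-of-phi-hat:all-sets-for-lsc}) gives $\phi=\widehat{\phi}=\widehat{\phi}_\sigma$, so
\[
\phi(B)=\sup\{\mu_f(B): f:\omega\to[0,\infty),\ \mu_f\leq\phi\}
\qquad(B\subseteq\omega),
\]
where we used that any $\sigma$-measure dominated by $\phi$ must take the form $\mu_f$ with $f(k)=\mu_f(\{k\})\leq\phi(\{k\})<\infty$.

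Let $\cF$ denote the collection of all $f:\omega\to[0,\infty)$ with $\mu_f\leq\phi$ and $\sum_k f(k)=\infty$; for each such $f$ the set $\I_f=\fin(\mu_f)=\{B:\sum_{k\in B}f(k)<\infty\}$ is a summable ideal. The claim is
\[
\I \;=\; \bigcap_{f\in\cF}\I_f.
\]
The inclusion $\I\subseteq\bigcap_{f\in\cF}\I_f$ is immediate: if $\phi(A)<\infty$ then $\mu_f(A)\leq\phi(A)<\infty$ for every $f\in\cF$.

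For the reverse inclusion, take $A\notin\I$, so $\phi(A)=\infty$. Using the sup representation above, for each $n\geq 1$ choose $f_n:\omega\to[0,\infty)$ with $\mu_{f_n}\leq\phi$ and $\mu_{f_n}(A)>2^n$. Define
\[
g(k)=\sum_{n\geq 1}2^{-n}f_n(k).
\]
Each $f_n(k)\leq\phi(\{k\})<\infty$, so $g(k)\leq\phi(\{k\})<\infty$, hence $g:\omega\to[0,\infty)$. Moreover $\mu_g=\sum_{n\geq 1}2^{-n}\mu_{f_n}$ is a $\sigma$-measure (countable weighted sum of $\sigma$-measures), and for every $B\subseteq\omega$,
\[
\mu_g(B)=\sum_{n\geq 1}2^{-n}\mu_{f_n}(B)\leq \phi(B)\sum_{n\geq 1}2^{-n}=\phi(B),
\]
so $\mu_g\leq\phi$. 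Finally $\mu_g(A)\geq\sum_{n\geq 1}2^{-n}\cdot 2^n=\infty$, which forces $\sum_k g(k)\geq\mu_g(A)=\infty$, so $g\in\cF$ and $A\notin\I_g$. This proves the claim, and the ``in particular'' statement follows by choosing any single $\I_f$ appearing in the intersection.

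The only subtle step is the one genuine construction, namely producing from a set $A$ with $\phi(A)=\infty$ a \emph{single} $\sigma$-measure witness $\mu_g\leq\phi$ with $\mu_g(A)=\infty$; the weighted average $g=\sum 2^{-n}f_n$ handles this, and the geometric weights are crucial both to keep $\mu_g\leq\phi$ and to keep each $g(k)$ finite so that $\I_g$ is a genuine summable ideal.
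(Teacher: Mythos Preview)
Your proof is correct and follows essentially the same approach as the paper: both fix $A\notin\I$, choose a sequence of dominated measures $\mu_{f_n}\leq\phi$ with $\mu_{f_n}(A)\geq 2^n$, and form the weighted average $g=\sum_n 2^{-n}f_n$ to obtain a single summable ideal $\I_g\supseteq\I$ with $A\notin\I_g$. The only cosmetic difference is that the paper first passes through finite subsets of $A$ (via lower semicontinuity) before invoking nonpathology, whereas you invoke $\phi=\widehat{\phi}_\sigma$ directly from Proposition~\ref{prop:properties-of-phi-hat}(\ref{prop:properties-of-phi-hat:all-sets-for-lsc}); the resulting construction is identical.
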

\begin{proof}
Let $\phi$ be a nonpathological submeasure such that $\I={\fin}(\phi)$. Take $X\not\in\I$. Then $\phi(X)=\infty$, thus, since $\phi$ is lower semicontinuous and nonpathological, there exist finite sets $A_1,A_2,\ldots\subseteq X$ and measures $\mu_1,\mu_2,\ldots$ with $\mu_n\leq \phi$ for every $n$ such that $\mu_n(A_n)\geq 2^n$. 

We will finish the proof by finding a summable ideal $\I_g$ such that $\I\subseteq\I_g$ and $X\not\in\I_g$.
Define $g:\N\rightarrow[0,\infty)$ by
$$g(i)=\sum_{n=1}^{\infty}\frac{\mu_n(\{i\})}{2^n}. $$
Then for every $A\subseteq\N$ we have
$$\sum_{i\in A} g(i) =\sum_{i\in A} \sum_{n=1}^{\infty}\frac{\mu_n(\{i\})}{2^n}=\sum_{n=1}^{\infty} \frac{\mu_n(A)}{2^n}\leq\sum_{n=1}^{\infty} \frac{\phi(A)}{2^n}=\phi(A). $$
It follows that if $\phi(A)<\infty$ then $\sum_{i\in A} g(i)<\infty$, thus $\I\subseteq \I_g$.

Now, observe that 
$$\sum_{i\in X} g(i)=\sum_{n=1}^{\infty} \frac{\mu_n(X)}{2^n}\geq \sum_{n=1}^{\infty} \frac{\mu_n(A_n)}{2^n}\geq \sum_{n=1}^{\infty} 1=\infty, $$
thus $X\not\in\I_g$.
\end{proof}

\begin{corollary}
If $\I$ is a nonpathological $F_\sigma$ ideal then for every $X\not\in\I$, $\I\upharpoonright X$ can be extended to some summable ideal.
\end{corollary}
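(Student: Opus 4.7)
The plan is to extract the stronger content already present in the proof of Theorem~\ref{thm:nonpath-intersection-of-summable} and then observe that restrictions of summable ideals are again summable. Although the ``in particular'' clause of Theorem~\ref{thm:nonpath-intersection-of-summable} is only stated as $\I \subseteq \I_g$ for some summable $\I_g$, the construction in that proof in fact produces, for any prescribed $X \notin \I$, a function $g \colon \omega \to [0,\infty)$ such that $\I \subseteq \I_g$ \emph{and} $X \notin \I_g$. This stronger statement is exactly what we need here.

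So first, given a nonpathological $F_\sigma$ ideal $\I$ and a set $X \notin \I$, I would invoke (the proof of) Theorem~\ref{thm:nonpath-intersection-of-summable} to obtain $g \colon \omega \to [0,\infty)$ with $\I \subseteq \I_g$ and $\sum_{i \in X} g(i) = \infty$. Next, I would pass to restrictions on $X$: from $\I \subseteq \I_g$ it immediately follows that
\[
\I \upharpoonright X \;\subseteq\; \I_g \upharpoonright X \;=\; \Bigl\{ A \subseteq X : \sum_{i \in A} g(i) < \infty \Bigr\}.
\]
The right-hand side is precisely the summable ideal on $X$ generated by $g \upharpoonright X$; it is a proper ideal on $X$ because $\sum_{i \in X} g(i) = \infty$ guarantees $X \notin \I_g \upharpoonright X$, and it contains all finite subsets of $X$ since $g$ is finite-valued. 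Hence $\I \upharpoonright X$ extends to a summable ideal on $X$, which completes the argument.

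There is essentially no obstacle beyond bookkeeping, since all the work has already been done in Theorem~\ref{thm:nonpath-intersection-of-summable}. The only subtlety worth flagging is that one should quote the proof of that theorem rather than its formal ``in particular'' statement, because the needed fact ``$X \notin \I_g$'' is established inside the proof (the final displayed inequality there, using $\mu_n(A_n) \geq 2^n$ with $A_n \subseteq X$). If one prefers to avoid referring to the proof, an equivalent route is to use the representation $\I = \bigcap \cM$ with $\cM$ a family of summable ideals: for $X \notin \I$ there must exist $\I_g \in \cM$ with $X \notin \I_g$, and the same restriction argument then concludes.
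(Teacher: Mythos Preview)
Your proposal is correct and matches the paper's intent: the corollary is stated without proof immediately after Theorem~\ref{thm:nonpath-intersection-of-summable}, so it is meant to follow directly from that theorem (or its proof), exactly as you argue. Both routes you outline---quoting the construction in the proof that yields $\I\subseteq\I_g$ with $X\notin\I_g$, or using the intersection representation to pick a summable $\I_g\supseteq\I$ with $X\notin\I_g$---are valid and equivalent, and the restriction step $\I\upharpoonright X\subseteq\I_g\upharpoonright X$ is straightforward.
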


\begin{proposition}
Let $\I$ be an ideal. Then the following are equivalent.
\begin{enumerate}
\item $\I$ is an intersection of summable ideals.
\item for every $A\not\in\I$ there exists some summable ideal $\I_f$ such that $\I\restriction A\leq_K \I_f$.
\end{enumerate}
\end{proposition}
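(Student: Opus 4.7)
The plan is to imitate the proof of Theorem~\ref{thm:matrix-intersection-via-KAT-and-Z}, with matrix ideals and $\I_d$ replaced by summable ideals throughout. The two directions are almost dual: both rely on the fact that any function $g:\omega\to A$ (where $A\notin\I$ is infinite) allows one to transport a function $f:\omega\to[0,\infty)$ and its associated $\sigma$-measure $\mu_f$ back and forth, turning one summable ideal into another.

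For $(1)\Rightarrow(2)$, I would fix a representation $\I=\bigcap_{j\in J}\I_{f_j}$ and, given $A\notin\I$, pick $j_0$ with $A\notin\I_{f_{j_0}}$, so that $\sum_{a\in A}f_{j_0}(a)=\infty$. A bijection $g:\omega\to A$ together with $h(n):=f_{j_0}(g(n))$ yields a proper summable ideal $\I_h$ on $\omega$, and the identity
$$\sum_{n\in g^{-1}[B]}h(n)=\sum_{a\in B}f_{j_0}(a)$$
shows, using $\I\subseteq\I_{f_{j_0}}$, that $g$ witnesses $\I\restriction A\leq_K\I_h$.

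For $(2)\Rightarrow(1)$, for each $A\notin\I$ I would take a summable ideal $\I_{f_A}$ and a reduction $g_A:\omega\to A$ guaranteed by the hypothesis, and push $\mu_{f_A}$ forward along $g_A$ to produce a function $h_A:\omega\to[0,\infty)$ supported on $A$ with
$$h_A(a)=\sum_{n\in g_A^{-1}(a)}f_A(n)\quad\text{for }a\in A.$$
Applying the hypothesis to singletons (which lie in $\I\restriction A$ since ideals in this part of the paper contain all finite sets) shows that each $h_A(a)$ is finite, while $\sum_{n\in\omega}h_A(n)=\mu_{f_A}(\omega)=\infty$, so $\I_{h_A}$ is a proper summable ideal. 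A direct pushforward computation yields $\I\subseteq\I_{h_A}$ and $A\notin\I_{h_A}$, from which $\I=\bigcap_{A\notin\I}\I_{h_A}$ follows.

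I do not anticipate a genuine obstacle: the construction is a routine translation of the matrix-ideal argument. The only point that deserves care is the verification that $h_A$ is finite-valued, which is exactly where the hypothesis on singletons is used; everything else reduces to the standard pushforward identity for discrete measures.
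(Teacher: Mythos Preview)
Your proposal is correct and follows essentially the same approach as the paper. The paper dismisses $(1)\Rightarrow(2)$ as ``obvious'' while you spell out the bijection argument, and for $(2)\Rightarrow(1)$ both you and the paper use the identical pushforward construction $h_A(a)=\sum_{n\in g_A^{-1}(a)}f_A(n)$; your explicit verification that $h_A$ is finite-valued via the singleton hypothesis is a detail the paper leaves implicit.
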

\begin{proof}
The $(1)\Rightarrow (2)$ implication is obvious.

To prove the $(2)\Rightarrow (1)$ implication, take $A\not\in\I$ and a summable ideal  $\I_f$ such that $\I\restriction A\leq_K \I_f$. Let $g:\N\to A$ be a  witness to that. 
Define the function $h:\N\to[0,\infty)$ by
$$h(n)=\sum_{i\in g^{-1}(\{n\} )}f(i) $$
for every $n\in\N$. Clearly, $\sum_{n\in\N} h(n)=\infty$, thus $\I_h$ is a summable ideal such that $A\not\in\I_h$.

We will finish the proof by showing that $\I\restriction A\subseteq\I_h$. Take $B\in\I\upharpoonright A$. Then $g^{-1}(B)\in\I_f$, therefore $\sum_{i\in g^{-1}(B) }f(i)<\infty$, thus $\sum_{n\in B} h(n)<\infty$. It follows that $B\in\I_h$.
\end{proof}

\begin{proposition}
If $\I$ is an $F_\sigma$ ideal that can be represented as an intersection of summable ideals then for every $X\not\in\I$ there exists a submeasure $\phi$ such that ${\fin}(\phi)=\I$ and $\widehat\phi_\sigma(X)=\infty$. 
\end{proposition}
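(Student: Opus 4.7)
The plan is to combine an lsc submeasure witnessing that $\I$ is $F_\sigma$ with a single summable measure that blows up on $X$. Specifically, because $\I$ is $F_\sigma$, fix an lsc submeasure $\psi$ on $\omega$ with $\fin(\psi)=\I$. Because $\I$ is the intersection of a family of summable ideals and $X\notin\I$, there must exist some $f:\omega\to[0,\infty)$ with $\I\subseteq\I_f$ and $X\notin\I_f$; equivalently, $\mu_f(A)=\sum_{n\in A}f(n)<\infty$ for every $A\in\I$, while $\mu_f(X)=\infty$.

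Next I would set
\[
\phi(A)=\max\{\psi(A),\mu_f(A)\}
\]
(the sum $\psi+\mu_f$ would work equally well). Since $\psi$ and $\mu_f$ are both lsc submeasures, $\phi$ is an lsc submeasure. To check $\fin(\phi)=\I$: if $A\in\I$, then $\psi(A)<\infty$ (because $\fin(\psi)=\I$) and $\mu_f(A)<\infty$ (because $\I\subseteq\I_f$), so $\phi(A)<\infty$; conversely, if $A\notin\I$, then $\psi(A)=\infty$, so $\phi(A)=\infty$.

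Finally, $\mu_f$ is a $\sigma$-measure with $\mu_f\leq\phi$, so by the definition of $\widehat{\phi}_\sigma$ we have
\[
\widehat{\phi}_\sigma(X)\geq\mu_f(X)=\sum_{n\in X}f(n)=\infty.
\]
There is no real obstacle here; the only subtlety to verify is that picking a \emph{single} summable ideal $\I_f$ from the representing family (rather than having to combine infinitely many) suffices, which is immediate from $X\notin\bigcap_\alpha\I_{f_\alpha}$ implying $X\notin\I_{f_\alpha}$ for some $\alpha$.
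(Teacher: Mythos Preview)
Your proof is correct and essentially identical to the paper's own argument: the paper also fixes $\psi$ with $\fin(\psi)=\I$, picks a single summable ideal $\I_f$ from the representing family with $X\notin\I_f$, sets $\phi=\max\{\psi,\mu_f\}$, and verifies $\fin(\phi)=\I$ and $\widehat{\phi}_\sigma(X)\geq\mu_f(X)=\infty$ in the same way.
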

\begin{proof}
Let $X\not\in\I$ and let $\psi$ be such that ${\fin}(\psi)=\I$. Since there exists a summable ideal $\I_f$ such that $\I\subseteq\I_f$ and $X\not\in\I_f$, for a measure $\mu_f(A)=\sum_{n\in A}f(n)$ we have $\mu_f(X)=\infty$. Define $\phi=\max\{\psi,\mu_f\}$. Then $\phi$ is a lower semicontinuous submeasure as a maximum of a measure and a lower semicontinuous submeasure. Observe that $\widehat\phi_\sigma\geq \mu_f$, because $\mu_f$ is a $\sigma$-measure and $\mu_f\leq \phi$. Hence $\widehat\phi_\sigma(X)=\infty$. 

To see that ${\fin}(\phi)=\I$, first note that $\psi\leq\phi$, thus ${\fin}(\phi)\subseteq{\fin}(\psi)=\I$. On the other hand, if $A\in\I\subseteq\I_f$, then we have both $\psi(A)<\infty$ and $\mu_f(A)<\infty$. Therefore, $\phi(A)=\max\{\psi(A),\mu_f(A)\}<\infty$.
\end{proof}

Note that summable ideals are nonpathological $F_\sigma$ ideals, so they can be represented as intersections of matrix ideals. Moreover,  $\I_d$ is a matrix ideal that cannot be extended to any summable ideal, thus being represented as an intersection of summable ideals is strictly stronger than being  represented as  an intersection of matrix ideals. However, the following question is still open.

\begin{question}
Let $\I$ be an $F_\sigma$ ideal. Then each of the following items implies the next.
\begin{enumerate}
\item $\I$ is nonpathological.
\item $\I$ can be represented as an intersection of summable ideals.
\item $\I$ can be represented as  an intersection of matrix ideals.
\end{enumerate}
Can these implications be reversed?
\end{question}

We finish this section with an example which gives us answer to  \cite[Question~3.10]{MR4797308}.
First, we need to introduce some notions and notations necessary to the formulation of this question.
For any finite set $K\subseteq \omega$ and a family  $\cF\subseteq \cP(K)$ with $\bigcup \cF = K$, we define 
 the \emph{covering number of $\cF$ in $K$} by
$$\delta(K,\cF) = \frac{\min\{ |\{F\in \cF:i\in F\}|:i\in K\}}{|\cF|}.$$

Let $\phi$ be a lsc submeasure such that $\phi(\omega)=\infty$ and there is $M>0$ such that $$\cB_M=\{A:\phi(A)<M\}$$
is a cover of $\omega$.
Let $(K_n:n< \omega)$ be a strictly increasing sequence of finite subsets of $\omega$ such that $\bigcup_{n\in \omega}K_n=\omega$ and let  $(\cF_n:n<\omega)$ be a sequence such that  $\cF_n\subseteq\cP(K_n)\cap \cB_M$ and $\bigcup \cF_n=K_n$ for each $n$.
Then we define
$$\delta(\phi,M,(K_n),(\cF_n)) = \inf\{\delta(K_n,\cF_n):n\in \omega\}.$$

\begin{lemma}
\label{lem:asljfdasjdfasd}
    If ${\fin}(\phi)$ is a tall ideal, then   there exists $M>0$ with
    $$\delta(\phi,M,(K_n),(\cF_n)=0,$$
where
$K_n=\{i\in \omega:i<n\}$ and $\cF_n=\{\{i\}:i<n\}$ for each $n\in \omega$.
\end{lemma}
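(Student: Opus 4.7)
The plan is to exhibit an explicit constant $M$ and observe that the covering numbers $\delta(K_n,\cF_n)$ for the chosen data decay to zero. The only nontrivial content is making sure that the particular $\cF_n=\{\{i\}:i<n\}$ satisfies the hypothesis $\cF_n\subseteq\cP(K_n)\cap\cB_M$, which reduces to showing that the singletons have uniformly bounded $\phi$-measure.

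First I would observe that, since $\cF_n$ consists of the $n$ singletons of $K_n$, each $i\in K_n$ lies in exactly one $F\in\cF_n$, so
$$\delta(K_n,\cF_n)=\frac{1}{|\cF_n|}=\frac{1}{n},$$
which tends to $0$. Therefore $\delta(\phi,M,(K_n),(\cF_n))=\inf_n 1/n=0$ for any admissible $M$. Thus the whole lemma collapses to finding $M>0$ such that $\phi(\{i\})<M$ for every $i\in\omega$ (automatically making $\cB_M$ a cover, since $i\in\{i\}\in\cB_M$).

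The key step, which I regard as the main (and only) real point, is to prove the following consequence of tallness:
$$\sup_{i\in\omega}\phi(\{i\})<\infty.$$
I would prove this by contradiction. If the supremum were $\infty$, one could pick an injective sequence $i_0<i_1<\dots$ with $\phi(\{i_k\})\to\infty$. The set $A=\{i_k:k\in\omega\}$ is infinite, so tallness of $\fin(\phi)$ would produce an infinite $B\subseteq A$ with $\phi(B)<\infty$. But $B$ contains $\{i_k\}$ for infinitely many $k$, and monotonicity forces $\phi(B)\geq \phi(\{i_k\})$ for each such $k$, giving $\phi(B)=\infty$, a contradiction.

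Finally, I would fix $M:=1+\sup_{i\in\omega}\phi(\{i\})$. Then $\{i\}\in\cB_M$ for every $i$, so $\cF_n\subseteq\cP(K_n)\cap\cB_M$ and $\bigcup\cF_n=K_n$ as required; moreover $\cB_M$ clearly covers $\omega$ and $\phi(\omega)=\infty$ because ${\fin}(\phi)$ being a proper ideal forces $\omega\notin\fin(\phi)$. All hypotheses of the definition of $\delta(\phi,M,(K_n),(\cF_n))$ are satisfied, and the computation $\delta(K_n,\cF_n)=1/n$ from the first step finishes the proof.
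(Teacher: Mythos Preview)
Your proof is correct and follows essentially the same approach as the paper: use tallness to get a uniform bound $M$ on $\phi(\{i\})$, verify that $\cF_n\subseteq\cB_M$, and compute $\delta(K_n,\cF_n)=1/n$. You are somewhat more careful than the paper, which simply asserts the existence of $M$ with $\phi(\{n\})\leq M$ without spelling out the contradiction argument; your choice $M=1+\sup_i\phi(\{i\})$ also cleanly handles the strict inequality in the definition of $\cB_M$.
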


\begin{proof}
    Since ${\fin}(\phi)$ is a tall ideal, there is $M>0$ such that $\phi(\{n\})\leq M$ for each $n\in \omega$.
    Then $(K_n)$ is a strictly increasing sequence of finite subsets of $\omega$ with $\bigcup_{n\in \omega}K_n=\omega$,   $\cF_n\subseteq \cP(K_n)\cap \cB_M$ and $\bigcup \cF_n=K_n$ for each $n<\omega$.
Moreover,
$\delta(K_n,\cF_n) = 1/n$ for each $n$.
Thus, $\delta(\phi,M,(K_n),(\cF_n))=0$.    
\end{proof}

In 
\cite[Theorem~3.9]{MR4797308}, the authors proved that 
if  
$\delta(\phi,M,(K_n),(\cF_n))>0$, then $\widehat{\phi}_\sigma(\omega)<\infty$. Then they  
asked
\cite[Question~3.10]{MR4797308}
whether  $\widehat{\phi}_\sigma(\omega)<\infty$ implies $\delta(\phi,M,(K_n),(\cF_n))>0$, and whether 
$\delta(\phi,M,(K_n),(\cF_n))=0$ implies that ${\fin}(\phi)$ is nonpathological.
Below we show that the answers to both questions are negative.

To answer the first question, we take  a submeasure $\psi_3$ which is defined in 
  Proposition~\ref{prop:pathological-lsc-submeasures}(\ref{prop:pathological-lsc-submeasures:item-non-lsc-hat}). We have already proved there  that  
$\psi_3$ is pathological and $\widehat{(\psi_3)}_\sigma(\omega)<\infty$. 
Once we prove that $\Fin(\psi_3)$ is tall,  Lemma~\ref{lem:asljfdasjdfasd} will show that
$\delta(\phi,M,(K_n),(\cF_n))=0$
for an appropriate $M$, $(K_n)$ and $(\cF_n)$, so the answer to the first question will be obtained. 

Tallness of $\Fin(\psi_3)$  follows from the fact that for every $n\in \omega$ and for every $i\in I_n$ we have $\psi_3(\{i\})\leq (n+1)/2^{n+1}$.  
To prove this fact, suppose for the sake of contradiction that 
there is  $n\in \omega$ and there is  $i\in I_n$ such that  $\psi_3(\{i\})> (n+1)/2^{n+1}$.  
Then we take the measure $\mu=\phi_n(\{i\})\delta_{i}$ and observe that $\mu\leq \phi_n$, so $\mu(\I_n)\leq 1/2^{n+1}$. 
On the other hand, $\mu(I_n)\geq \mu(\{i\})=\phi_n(\{i\})=\psi_3(\{i\})/(n+1)>1/2^{n+1}$, a contradiction.

To answer the second question, we take a pathological $F_\sigma$ P-ideal $\I$ constructed in \cite[Theorem~4.12]{ft-mazur}. Since $\I$ is tall,
Lemma~\ref{lem:asljfdasjdfasd}  show that
$\delta(\phi,M,(K_n),(\cF_n))=0$
for an appropriate $M$, $(K_n)$ and $(\cF_n)$, so the answer to the second question is obtained. 


\section{Van der Waerden ideal}
\label{sec:vdW}

We start this section by recalling two versions of van der Waerden Theorem (see e.g.~\cite[Section~2]{MR1044995}).
\begin{theorem}\label{thm:vdw-infinite}
Let $A\subseteq\N$ be a set containing arithmetic progressions of any given length. For any partition of $A$ into finitely many subsets, at least one of them contains  arithmetic progressions of any given finite length.
\end{theorem}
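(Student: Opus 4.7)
The plan is to deduce this statement from the classical finite van der Waerden theorem, which I will use as a black box: for every $r,k\in\N$ there exists $N=W(r,k)\in\N$ such that every partition of $\{0,1,\dots,N-1\}$ into $r$ pieces yields a monochromatic arithmetic progression of length $k$. The key elementary observation I will invoke is that any arithmetic progression $P=\{a+jd:0\leq j<N\}$ of length $N$ in $\N$ is order-isomorphic to $\{0,\dots,N-1\}$ in a way that sends sub-APs of indices to sub-APs of $\N$, because $\{j_0+\ell e:0\leq \ell<k\}$ being an AP corresponds to $\{a+(j_0+\ell e)d:0\leq \ell<k\}$ being an AP with common difference $ed$. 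Thus the finite van der Waerden theorem applies verbatim to partitions of a given AP of length $N$.

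I will argue by contradiction. Fix a partition $A=A_1\cup\dots\cup A_r$ and suppose that no $A_i$ contains APs of all finite lengths. Then for each $i\leq r$ there is an integer $k_i$ such that $A_i$ contains no AP of length $k_i$. Putting $k=\max_{i\leq r} k_i$, no piece of the partition contains any AP of length $k$. Using the hypothesis on $A$, I then produce inside $A$ an arithmetic progression $P$ of length $N=W(r,k)$. The partition of $A$ restricts to an $r$-partition of $P$, and by the finite van der Waerden theorem applied through the identification of $P$ with $\{0,\dots,N-1\}$, some piece $A_i\cap P$ contains an AP of length $k$. This is an AP of length $k$ contained in $A_i$, contradicting the choice of $k$.

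I do not anticipate any real obstacle here: once the finite form is assumed, the argument is just matching up the Ramsey-type bound $W(r,k)$ with an AP of that length provided by the hypothesis. The only point to state carefully is the correspondence between sub-APs of indices and APs in $\N$, which is immediate as noted above. No clever choice of $k$ or $P$ is required; taking the maximum $k=\max_i k_i$ and any AP in $A$ of length $W(r,k)$ suffices.
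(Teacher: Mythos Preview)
Your argument is correct: the reduction to the finite van der Waerden number $W(r,k)$ via the maximum $k=\max_i k_i$ and the affine identification of an AP of length $N$ with $\{0,\dots,N-1\}$ is the standard way to obtain this statement, and every step you wrote goes through without issue.

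There is nothing to compare against, however, because the paper does not prove this theorem at all. It is simply recalled as one of ``two versions of van der Waerden Theorem'' with a reference to the literature, and then used as a black box. So your proposal supplies a proof where the paper intentionally omits one; it is fine as a self-contained justification, but be aware that in the paper's context this result is treated as classical background rather than something to be established.
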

\begin{theorem}\label{thm:vdw-finite}
For any $n\in\N$ there exists a number $W_n\in\N$ such that for any set $A\subseteq\N$ containing an arithmetic progression of length $W_n$ if $A=B\cup C$, then at least one of the sets $B$, $C$ contains an arithmetic progression of length $n$.
\end{theorem}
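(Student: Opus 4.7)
The plan is to derive Theorem~\ref{thm:vdw-finite} from the previously stated Theorem~\ref{thm:vdw-infinite} via a standard compactness argument, so that all of the genuine combinatorial content remains bundled into the infinite version.

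First, I would reduce to the canonical case $A=\{0,1,\dots,W-1\}$. If $A=\{a,a+d,\dots,a+(W-1)d\}$ is an arbitrary AP of length $W$, the affine bijection $i\mapsto a+id$ is a length-preserving correspondence between APs inside $\{0,1,\dots,W-1\}$ and APs inside $A$; consequently the statement of Theorem~\ref{thm:vdw-finite} is equivalent to the assertion that there exists $W_n$ such that any partition $\{0,1,\dots,W_n-1\}=B\cup C$ has $B$ or $C$ containing an AP of length $n$. From now on one works with this canonical form.

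Second, suppose toward contradiction that no such $W_n$ exists. Then for each $W\in\N$ there is a $2$-coloring $\chi_W:\{0,1,\dots,W-1\}\to\{0,1\}$ neither of whose color classes contains an AP of length $n$. Apply K\"onig's lemma (equivalently, compactness of $\{0,1\}^{\N}$ in the product topology): for each fixed $M$ the restrictions $\chi_W\upharpoonright\{0,\dots,M-1\}$ take values in the finite set $\{0,1\}^M$, so by a diagonal argument one extracts an increasing sequence $W_0<W_1<\dots$ and a single coloring $\chi:\N\to\{0,1\}$ with $\chi\upharpoonright\{0,\dots,M-1\}=\chi_{W_M}\upharpoonright\{0,\dots,M-1\}$ for every $M$. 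Since no initial segment of $\N$ under $\chi$ contains a monochromatic AP of length $n$, neither $B=\chi^{-1}(0)$ nor $C=\chi^{-1}(1)$ contains one.

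Third, this contradicts Theorem~\ref{thm:vdw-infinite} applied to $A=\N$ with the partition $\N=B\cup C$: as $\N$ trivially contains APs of every finite length, one of $B$, $C$ must contain APs of every finite length, and hence, in particular, an AP of length $n$. I expect no real obstacle to this plan; the only step requiring a little care is the compactness extraction, which is entirely routine once one observes that the restriction maps land in a finite set. The alternative of a self-contained combinatorial proof (classical double induction via Hales--Jewett or Shelah's focusing argument) is much longer and, given that Theorem~\ref{thm:vdw-infinite} is already available, unnecessary here.
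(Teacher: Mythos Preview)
Your compactness argument is correct and is the standard way to pass from the infinite form (Theorem~\ref{thm:vdw-infinite}) to the finitary one. The reduction to the canonical interval $\{0,1,\dots,W-1\}$ via the affine bijection is fine; one might add a single sentence noting that if $A$ merely \emph{contains} an AP $P$ of length $W_n$ then one replaces $A$ by $P$ and $B,C$ by $B\cap P,\,C\cap P$, but this is implicit in what you wrote.

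As for comparison with the paper: there is nothing to compare. The paper does not prove Theorem~\ref{thm:vdw-finite}; it is stated as a classical result with a reference to the literature (Graham--Rothschild--Spencer), just like Theorem~\ref{thm:vdw-infinite} and Szemer\'edi's theorem in the same section. Your derivation from Theorem~\ref{thm:vdw-infinite} is therefore more than the paper itself offers, and is entirely appropriate given that the infinite version is already being quoted.
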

Due to the first of these theorems, the family $\cW$ which consists of those subsets of $\N$ that do not contain arithmetic progressions  of arbitrary finite length
is an ideal called \emph{van der Waerden ideal}.
It is known that $\cW$ is an $F_\sigma$ ideal (\cite[Example~4.12]{MR4572258}) and that it can be extended to a summable ideal (\cite[Theorem~3.1]{klinga-nowik}), thus by the homogeneity
of $\cW$ (\cite[Example~2.6]{kwela-tryba}) it is equal to the intersection of a family of  summable ideals.
To the best of our knowledge, the possible pathology of $\cW$ (or lack of thereof)  has not  been researched before.

To prove the nonpathology of $\cW$, we will also need a finitary version of Szemer\'edi's Theorem (\cite{szemeredi}, see also \cite[Section~1.4]{MR1044995}).

\begin{theorem}[{\cite{szemeredi}}]\label{thm:szemeredi}
Let $n\in\N$ and let $0<\delta\leq 1$. Then there exists a number $N(n,\delta)$ such that for any  arithmetic progression $A$ of length $k\geq N(n,\delta)$ every  set $B\subseteq A$ of size at least $\delta k$ contains an arithmetic progression of length $n$.
\end{theorem}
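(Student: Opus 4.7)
The stated result is Szemer\'edi's theorem, cited here from \cite{szemeredi}; the paper uses it as a black-box input to establish the nonpathology of the van der Waerden ideal $\cW$ in the paragraphs that follow, so no proof of Szemer\'edi's theorem appears in the paper itself. If asked to sketch a proof plan from scratch, I would outline Furstenberg's ergodic-theoretic approach, which is the cleanest conceptual route; Szemer\'edi's original combinatorial proof via the regularity lemma is very involved, and Gowers' Fourier-analytic proof gives better quantitative bounds on $N(n,\delta)$ but requires the deep inverse theorem for Gowers norms as a further black box.

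The first step of the plan is Furstenberg's correspondence principle, which converts the finitary density statement into an infinitary ergodic one: it suffices to show that for every measure-preserving system $(X,\cB,\mu,T)$ and every $E\in\cB$ with $\mu(E)>0$ one has
\[
\liminf_{N\to\infty}\frac{1}{N}\sum_{d=1}^{N}\mu\bigl(E\cap T^{-d}E\cap\cdots\cap T^{-(n-1)d}E\bigr)>0.
\]
The second step is to prove this multiple recurrence statement via the Furstenberg--Zimmer structure theorem, which exhibits any measure-preserving system as a (possibly transfinite) tower of extensions alternating between compact and weakly mixing extensions over the trivial one-point system; one then argues, by induction on $n$ and along the tower, that multiple recurrence is preserved under each type of extension, with a trivial base case on the bottom. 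Finally, a standard compactness/finitization argument extracts the uniform quantitative bound $N(n,\delta)$ claimed in the theorem from the infinitary multiple recurrence statement.

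The main obstacle is the step controlling compact extensions in the structure theorem: showing that multiple recurrence lifts through such extensions requires delicate harmonic analysis on the fibers (which are homogeneous spaces of compact groups) and occupies most of the technical weight of Furstenberg's original argument. A full treatment is therefore far beyond the scope of the paper under discussion, which legitimately treats Szemer\'edi's theorem as a cited tool and proceeds to combine it with the van der Waerden and Klinga--Nowik results to deduce the nonpathology of $\cW$.
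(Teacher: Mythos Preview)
Your assessment is correct: the paper does not prove Szemer\'edi's theorem but simply cites it from \cite{szemeredi} and uses it as a black-box input in the construction of the submeasure $\phi$ witnessing that $\cW$ is nonpathological. One small inaccuracy in your description: the Klinga--Nowik result is mentioned earlier only to note that $\cW$ extends to a summable ideal, but the actual nonpathology proof in the paper combines Szemer\'edi's theorem with the two van der Waerden theorems (Theorems~\ref{thm:vdw-infinite} and~\ref{thm:vdw-finite}) directly, without invoking Klinga--Nowik.
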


\begin{theorem}
$\cW$ is a nonpathological $F_\sigma$ ideal.
\end{theorem}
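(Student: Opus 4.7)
The plan is to construct an explicit lower semicontinuous nonpathological submeasure $\phi$ on $\omega$ with $\Fin(\phi)=\cW$; this suffices by the definition of a nonpathological $F_\sigma$ ideal. The submeasure will be presented as a supremum of (finite) measures, which automatically forces nonpathology via the same observation made in Section~\ref{sec:examples-of-nonpathol}: if $\mu\leq\phi$ for every member of a supremum family, then $\widehat{\phi}\geq\sup\mu=\phi$.

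For each arithmetic progression $P\subseteq\omega$ and each integer $n\geq 2$ I consider the finite $\sigma$-measure
$$\mu_{P,n}(A)=\frac{n\,|A\cap P|}{|P|},$$
which puts mass $n/|P|$ on every element of $P$. Szemer\'edi's theorem (Theorem~\ref{thm:szemeredi}) supplies, for each $n$, a number $N(n,1/n)$ such that any subset of an AP of length $\geq N(n,1/n)$ whose relative density is at least $1/n$ must contain an AP of length $n$. I then define
$$\phi(A)=\sup\bigl\{\mu_{P,n}(A):n\geq 2,\ P\text{ an AP with }|P|\geq N(n,1/n)\bigr\}.$$
The indexing family is countable, each $\mu_{P,n}$ is a lower semicontinuous measure, so $\phi$ is a lower semicontinuous submeasure, and each $\mu_{P,n}\leq\phi$ witnesses $\widehat{\phi}(A)\geq\sup_{P,n}\mu_{P,n}(A)=\phi(A)$; hence $\phi$ is nonpathological.

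The content is the verification that $\Fin(\phi)=\cW$. The inclusion $\omega\setminus\cW\subseteq\omega\setminus\Fin(\phi)$ is direct: if $A\notin\cW$ then for every $n$ there is an AP $P\subseteq A$ of length at least $N(n,1/n)$, so $\mu_{P,n}(A)=n$ and $\phi(A)=\infty$. The inclusion $\cW\subseteq\Fin(\phi)$ is where Szemer\'edi pulls its weight. Let $A\in\cW$ and fix the minimal $n_0$ such that $A$ contains no AP of length $n_0$. For an admissible pair $(P,n)$ with $n\geq n_0$, the set $A\cap P$ contains no AP of length $n$, so by the contrapositive of Szemer\'edi's theorem $|A\cap P|/|P|<1/n$, whence $\mu_{P,n}(A)<1$. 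For admissible pairs with $n<n_0$ the trivial bound $\mu_{P,n}(A)\leq n\leq n_0-1$ applies. Taking the supremum yields $\phi(A)\leq n_0-1<\infty$, as required.

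The main obstacle is simply choosing the right scaling $n/|P|$ in the definition of $\mu_{P,n}$, which is forced by two competing demands: the presence of a genuine $n$-AP inside $A$ must push $\phi(A)$ above a quantity unbounded in $n$, while Szemer\'edi's density bound $\delta=1/n$ must exactly cancel the weight $n$ so that $\mu_{P,n}(A)$ stays bounded on sets in $\cW$. Once this matching is in place nonpathology comes for free, since $\phi$ is by construction a supremum of measures.
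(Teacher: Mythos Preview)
Your proof is correct and takes a genuinely different route from the paper's.

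The paper defines an increasing sequence $(V_n)$ using both the finitary van der Waerden theorem and Szemer\'edi's theorem, sets $\phi(A)=\sup\{n:A\text{ contains an AP of length }V_n\}$, and then must do two separate pieces of work: verify that this $\phi$ is subadditive (this is where van der Waerden's theorem is invoked), and then bound the degree of pathology by showing $P_{\fin}(\phi)\leq 2$ via a counting-measure argument that again calls on Szemer\'edi. Your construction sidesteps both steps by writing $\phi$ directly as a supremum of the measures $\mu_{P,n}$: subadditivity and nonpathology come for free, and the only substantive work is the verification $\Fin(\phi)=\cW$, for which Szemer\'edi's theorem alone suffices. In particular your argument never appeals to the finitary van der Waerden numbers $W_n$. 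What the paper's approach buys is that its submeasure $\phi$ is the ``natural'' one measuring the length of the longest AP (on the calibrated scale $V_n$), and it yields the explicit quantitative bound $P_{\fin}(\phi)\leq 2$; your approach is shorter and produces a submeasure with $P(\phi)=1$ on the nose, at the cost of a less intrinsic description of~$\phi$.
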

\begin{proof}
We will obtain the theorem by finding an lsc submeasure $\phi$ such that ${\fin}(\phi)=\cW$ and $P_{\fin}(\phi)\leq 2$.

To begin, we will define inductively the sequence $(V_n)$. Let $V_1=1$, $V_2=2$ and assume that we have defined $V_1,\ldots,V_{n-1}$ for some $n\geq3$. 

Then for every $i< n$ we define as $N_i$ the smallest natural number such that for every   arithmetic progression $A$ of length at least $N_i$ and every set $B\subseteq A$ such that 
$$\frac{|B|}{|A|}\geq \frac{i}{n} $$
the set $B$ has to contain an arithmetic progression of length $V_i$. Such $N_i$ exists by Theorem~\ref{thm:szemeredi} (we put $N_i=N(V_i,i/n)$). 

We pick as $V_n$ the smallest natural number such that $V_n\geq N_i$ for every $i<n$ and $V_n\geq W_{V_{n-1}}$, where $W_j$ is defined as in Theorem~\ref{thm:vdw-finite}.

We can now proceed to defining the function $\phi:\cP(\N)\rightarrow [0,\infty]$ by
$$\phi(A)=\sup\{n\in\N: A \text{ contains an arithmetic progression of length } V_n \}  $$
for every $A\subseteq\N$.

Clearly, $\phi(\emptyset)=0$, $\phi(\N)=\infty$ and $\phi(A)\leq\phi(B)$ whenever $A\subseteq B$.

We will prove that $\phi(A\cup B)\leq \phi(A)+\phi(B)$. We may assume that $A,B$ are nonempty and disjoint. We have two cases. 

If  $\phi(A\cup B)=\infty$ then $A\cup B$ contains an arithmetic progression of arbitrary length, thus  by Theorem~\ref{thm:vdw-infinite} at least one of the two sets $A,B$ also needs to contain an arithmetic progression of any given length, thus at least one of $\phi(A)$, $\phi(B)$ has to be infinite, hence  $\phi(A) + \phi(B)=\infty$.

If  $\phi(A\cup B)=n$ for some $n\in\N$ then $A\cup B$ contains an arithmetic progression of length $V_n$. Since $V_n\geq W_{V_{n-1}}$, at least one of the two sets $A,B$ contains an arithmetic progression of length $V_{n-1}$ -- we may assume that it is $A$. Since $B$ is nonempty, $\phi(B)\geq 1$. Hence, $\phi(A)+\phi(B)\geq (n-1)+1=n$.

Therefore, $\phi$ is a submeasure. It is easy to see that it is lsc and that ${\fin}(\phi)=\cW$.  

To finish the proof, we need to show that $P_{\fin}(\phi)\leq 2$.
Take a finite, nonempty set $A\subseteq\N$. Then $\phi(A)=n$, hence it contains an arithmetic progression $B$ of length $V_n$.

If $n\leq 2$, then the measure $\mu(C)=|C\cap B|$ fulfills $\mu\leq\phi$ and $\mu(B)=\mu(A)=\phi(A)$, so we are done.

If $n>2$, we define the measure $\nu:\cP(A)\rightarrow[0,\infty)$  by 
$$\nu(C)=\frac{n\cdot |C\cap B|}{V_n}$$
for every $C\subseteq A$. 
Observe that $\nu(A)=\nu(B)=n$. 
We may also notice that for any $i\leq n$ we have that $\phi(C)\geq i$ whenever $\nu(C)\geq i$. Indeed, if  $\nu(C)\geq i$ then 
$$\frac{|C\cap B|}{|B|}\geq \frac{i}{n}. $$
Since $B$ is an arithmetic progression of length $V_n$, by the fact that $V_n\geq N(V_i,i/n)$ we find that $C$ contains an arithmetic progression of length $V_i$.

It follows that for every $C\subseteq A$ we have
$$\frac{\nu(C)}{\phi(C)}\leq \frac{i+1}{i}\leq 2.  $$

Therefore, if we take the measure  $\mu:\cP(A)\rightarrow[0,\infty)$  given by $\mu(C)=\nu(C)/2$ we obtain $\mu\leq \phi$. Since $\mu(A)= n/2$, we get 
$$\frac{\phi(A)}{\mu(A)}=2,$$
thus $P_{\fin}(\phi)\leq 2$.
\end{proof}

Let us recall one more   $F_\sigma$ ideal which is also defined with the aid of Ramsey theory. Namely, using Folkman's theorem (see e.g.~\cite[Section~3.4]{MR1044995}) we obtain the \emph{Folkman ideal} (\cite{kwela-tryba})
$$\cF = \{A\subseteq \N: \exists n \, \forall D\subseteq \N\ (|D|=n\implies FS(D)\not\subseteq A)\},$$
where $FS(D)$ is the set of all sums of distinct elements of $D$.
We do not know whether $\cF$ is a nonpathological ideal.


\section{Josefson-Nissenzweig property}
\label{sec:JNP}

Let $X$ be a Tychonoff space. 
By $C(X)$
we denote the set of all continuous real-valued functions on $X$,
and by $C^*(X)$ we denote the subspace of $C(X)$ consisting of all bounded functions.

By a \emph{measure on $X$} we mean a $\sigma$-additive measure defined on the $\sigma$-algebra of all Borel subsets of $X$ which is regular, signed  and has bounded total variation.
We say that a measure $\mu$ on $X$ 
is \emph{finitely supported} if  $\mu=\sum_{i=1}^k a_{i} \delta_{x_{i}}$ for some $x_{i}\in X$ and $a_i\in\R$, where $\delta_{x}$ is the probability measure concentrated on $x$. 
In the case $\mu$ is finitely supported, its variation is given by  
 $$||\mu||=\sum_{i=1}^k |a_i|$$ 
 and we will write 
 $$\mu(f)=\int_X f\,d\mu=\sum_{i=1}^k a_i f(x_i)$$ 
 for every $f\in C(X)$.

\begin{definition}[{\cite{marciszewski-sobota}}]
A Tychonoff space $X$ has the \emph{Josefson-Nissenzweig property} (the
\emph{bounded Josefson-Nissenzweig property}, resp.), or shortly the JNP (the BJNP, resp.), if
$X$ admits a sequence $(\mu_n)$ of 
finitely supported measures on $X$  such that $||\mu_n||=1$ for every $n\in\N$  and $\lim_{n\to\infty}\mu_n( f ) = 0$ for every $f \in C(X)$ (resp. $f \in C^*(X)$).
\end{definition}

Clearly, if $Y$ is a subspace of $X$ and $Y$ has the JNP (BJNP, resp.) then $X$ has the JNP (BJNP, resp.) too.

For an ideal $\I$ on $\N$, consider the space $X(\I)=\N\cup\{\infty\}$ with the topology such that every point in $\N$ is isolated while every open neighborhood  of $\infty$ is of the form $\{\infty\}\cup (\N\setminus A)$ for some $A\in\I$.

The JNP and BJNP  for the space $X(\I)$ were recently investigated in \cite{marciszewski-sobota}.

\begin{theorem}[{\cite[Theorem~A]{marciszewski-sobota}}]
Let $\I$ be an ideal    on $\N$. The following conditions are equivalent.
\begin{enumerate}
\item $X(\I)$ has the JNP.
\item $X(\I)$ contains a non-trivial convergent sequence.
\item $\I\approx_K {\fin}$.
\end{enumerate}
\end{theorem}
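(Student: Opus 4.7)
The plan is to handle the easy equivalences $(2)\iff(3)$ and $(2)\Rightarrow(1)$ directly, and to invest the bulk of the effort in the contrapositive of $(1)\Rightarrow(2)$.

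For $(2)\iff(3)$, I first observe that any non-trivial convergent sequence in $X(\I)$ must converge to the unique non-isolated point $\infty$, and (by passing to a subsequence) may be taken to be an injective sequence $(x_n)\subseteq \N$. The convergence condition then reads: for every $A\in \I$, the set $\{n:x_n\in A\}$ is finite. Thus $n\mapsto x_n$ is a Kat\v{e}tov witness for $\I\leq_K\fin$; conversely, any witness $f$ for $\I\leq_K \fin$ must be finite-to-one (test on singletons) with infinite range, and an enumeration of that range yields a non-trivial convergent sequence. Since $\fin\leq_K \I$ holds automatically via the identity, this gives the equivalence with $\I\approx_K\fin$.

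For $(2)\Rightarrow(1)$, I take $\mu_n=\tfrac{1}{2}(\delta_{x_{2n}}-\delta_{x_{2n+1}})$, where $(x_n)$ is the given convergent sequence. Then $\|\mu_n\|=1$, and for any $f\in C(X(\I))$, continuity at $\infty$ yields $f(x_n)\to f(\infty)$, so $\mu_n(f)\to 0$.

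The real work is the contrapositive of $(1)\Rightarrow(2)$. Assume that there is no non-trivial convergent sequence; by the equivalence $(2)\iff(3)$ already obtained, this means $\I$ is tall (every infinite $B\subseteq \N$ contains an infinite subset in $\I$). Let $(\mu_n)$ be any sequence of finitely supported unit measures on $X(\I)$ with $\mu_n(f)\to 0$ for every $f\in C(X(\I))$. Decomposing $\mu_n=\nu_n+c_n\delta_\infty$ with $\nu_n$ supported on $\N$, testing against the constant $\mathbf{1}$ gives $\nu_n(\N)+c_n\to 0$, and testing against the continuous indicators $\chi_A$ for $A\in \I$ and $\chi_{\{k\}}$ for $k\in \N$ gives $\nu_n(A)\to 0$ for every $A\in \I$ and $\nu_n(\{k\})\to 0$ for every $k$. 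Using the latter, I diagonalize to a subsequence along which the supports $S_n=\mathrm{supp}(\nu_n)\cap \N$ are pairwise disjoint. A short case analysis on whether $|c_n|\to 0$ then shows that a definite proportion of $\|\nu_n\|$ lies on one side of the Jordan decomposition (say $\nu_n^+(\N)\geq \varepsilon$) for infinitely many $n$. Applying tallness of $\I$ to the infinite union $\bigcup_n S_n^+$ yields an infinite $A\in \I$, and the plan is to choose $A$ so that $\nu_n(A)$ stays bounded away from zero along a subsequence, producing $\mu_n(\chi_A)\not\to 0$ and contradicting JNP.

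The main obstacle is this last selection step: a naive application of tallness gives an $A\in \I$ which may only meet each $S_n^+$ negligibly, or which may cancel against $A\cap S_n^-$. I expect to overcome this by a Ramsey-type coloring argument on the finite blocks $(S_n^+,S_n^-)$, or by applying tallness iteratively inside a carefully thinned subsequence, so that the single fixed $A$ extracted from the ideal captures a uniformly non-vanishing portion of $\nu_n^+$ while sweeping up only a controlled amount of $\nu_n^-$.
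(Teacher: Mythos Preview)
The paper does not prove this theorem; it is quoted verbatim from \cite{marciszewski-sobota} as background for Section~\ref{sec:JNP}, so there is no in-paper argument to compare against. I will therefore just assess your argument on its own.

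Your treatment of $(2)\Leftrightarrow(3)$ and $(2)\Rightarrow(1)$ is fine. The gap is in the contrapositive of $(1)\Rightarrow(2)$, and it is not a mere technicality: the strategy you outline \emph{cannot} succeed. You reduce to tall $\I$, disjoint supports $S_n$, and then try to find $A\in\I$ with $\mu_n(\chi_A)\not\to 0$. But everything you have extracted so far --- $\nu_n(A)\to 0$ for $A\in\I$, $\nu_n(\{k\})\to 0$, $\nu_n(\N)+c_n\to 0$, $\|\mu_n\|=1$ --- uses only \emph{bounded} continuous test functions, and these consequences are exactly what a BJNP sequence satisfies. Since there exist tall ideals $\I$ (e.g.\ $\I_d$) for which $X(\I)$ has the BJNP (Theorem~\ref{thm:ms-bjnp}) but not the JNP, a BJNP witness for $X(\I_d)$ satisfies all of your hypotheses yet admits no $A\in\I$ with $\nu_n(A)\not\to 0$. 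No amount of Ramsey-type refinement on indicator functions will produce the contradiction; the obstruction is structural, not combinatorial.

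What is missing is the one feature that distinguishes JNP from BJNP: you may test against \emph{unbounded} $f\in C(X(\I))$. After arranging $\|\nu_{n_j}\|\geq\delta$ and choosing distinct points $p_j$ with $\nu_{n_j}(\{p_j\})\neq 0$ (and $p_j\notin\bigcup_{i<j}\operatorname{supp}\nu_{n_i}$, with $n_j$ large enough that $\sum_{i<j}|\nu_{n_j}(\{p_i\})/\nu_{n_i}(\{p_i\})|<2^{-j}$), apply tallness once to $P=\{p_j:j\in\omega\}$ to get an infinite $A=\{p_j:j\in J\}\in\I$. Now set $f(\infty)=0$, $f(p_j)=1/\nu_{n_j}(\{p_j\})$ for $j\in J$, and $f=0$ elsewhere. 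Since $\{k:f(k)\neq 0\}\subseteq A\in\I$, $f$ is continuous (though typically unbounded), and $\mu_{n_j}(f)\to 1$ along $j\in J$, contradicting JNP. This is the step your plan should replace the Ramsey/indicator endgame with.
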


\begin{theorem}[{\cite[Theorem~C]{marciszewski-sobota}}]
\label{thm:ms-bjnp}
Let $\I$ be an ideal on $\N$. The following conditions are equivalent.
\begin{enumerate}
\item $X(\I)$ has the BJNP.
\item There is a matrix ideal $\J$ such that $\I\subseteq\J$.
\end{enumerate}
\end{theorem}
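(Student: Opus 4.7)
The plan for $(2)\Rightarrow(1)$ is to take, for a regular matrix $M=(m_{i,k})$ with $\I\subseteq \I(M)$, the signed measures $\nu_i=\sum_k m_{i,k}\delta_k - s_i\delta_\infty$ on $X(\I)$, where $s_i=\sum_k m_{i,k}\to 1$. For any $f\in C^*(X(\I))$, the function $f$ is $\I$-convergent to $f(\infty)$, hence $\I(M)$-convergent, and a standard $\varepsilon$-splitting using row-sum boundedness and the fact that $\{k:|f(k)-f(\infty)|\geq \varepsilon\}\in \I(M)$ shows that $\nu_i(f)=\sum_k m_{i,k}(f(k)-f(\infty))\to 0$. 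A truncation of the support to a finite $F_i\subseteq \N$ with tail mass $<1/i$ followed by normalization to unit variation then produces the required BJNP sequence.

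For $(1)\Rightarrow(2)$, write $\mu_n=\sum_{k\in F_n}a_{n,k}\delta_k+b_n\delta_\infty$ with $F_n$ finite. Testing $\mu_n$ against $\chi_{\{k\}}$, which is continuous since $\{k\}\in \I$, yields $a_{n,k}\to 0$ for each fixed $k$; testing against the constant function $1$ combined with $\|\mu_n\|=1$ gives $|b_n|\leq \tfrac12+o(1)$ and hence $\sum_k|a_{n,k}|\geq \tfrac12-o(1)$. A diagonal argument then lets me pass to a subsequence and discard from each $\mu_n$ an error of norm $<1/n$, so that the supports $F_n\subseteq \N$ become pairwise disjoint. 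The key observation is that for any $A\in \I$, the set
\[
B=\bigcup_n\bigl\{k\in A\cap F_n:a_{n,k}>0\bigr\}
\]
satisfies $B\subseteq A$, so $B\in \I$; disjointness of the $F_n$ makes $\mu_n(\chi_B)=\sum_{k\in A\cap F_n,\,a_{n,k}>0}a_{n,k}$ equal to the positive-part mass of $\mu_n|_\N$ on $A$, so BJNP applied to $\chi_B$ forces this quantity to $0$. The symmetric choice of the negative indices handles the negative part, and together they give $\sum_{k\in A}|a_{n,k}|\to 0$ for every $A\in \I$. Setting $m_{n,k}=|a_{n,k}|/c_n$ with $c_n=\sum_k|a_{n,k}|$ then produces a regular matrix $M$ with $\I\subseteq \I(M)$.

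The main obstacle will be the last step of $(1)\Rightarrow(2)$: BJNP only gives the signed cancellation $\sum_{k\in A}a_{n,k}\to 0$ for every fixed $A\in \I$, while a matrix ideal containing $\I$ requires the stronger total-variation bound $\sum_{k\in A}|a_{n,k}|\to 0$. Without pairwise disjoint supports, the sign-selecting set $B$ above would depend on the row index $n$ and could not be used as a single continuous test function. The disjointification of the $F_n$ is precisely what allows one to encode all the row-by-row sign choices into one single $\I$-set and thereby convert the signed cancellation supplied by BJNP into the variation control that defines a matrix summability ideal.
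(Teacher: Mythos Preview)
The paper does not give its own proof of this statement: it is quoted as Theorem~C of Marciszewski--Sobota, with the remark that the original formulation was in terms of extension to a \emph{density} ideal and that the equivalence ``extends to a density ideal $\Longleftrightarrow$ extends to a matrix ideal'' is imported from \cite[Section~3]{tryba-pathology}. So there is nothing to compare against beyond that two-step citation.

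Your direct argument is correct. For $(2)\Rightarrow(1)$ the computation $\nu_i(f)=\sum_k m_{i,k}(f(k)-f(\infty))\to 0$ via the $\varepsilon$-split is exactly right, since continuity of $f$ at $\infty$ in $X(\I)$ says precisely that $\{k:|f(k)-f(\infty)|\ge\varepsilon\}\in\I\subseteq\I(M)$; truncation and renormalisation go through because $\|\tilde\nu_i\|\to 2>0$. For $(1)\Rightarrow(2)$ the key points are all handled: $\chi_{\{k\}}\in C^*(X(\I))$ because singletons are clopen (we are in Part~\ref{part:ideals}, so $\{k\}\in\I$); the $1$-test plus $\|\mu_n\|=1$ indeed gives $\sum_k|a_{n,k}|\ge\tfrac12-o(1)$; the gliding-hump subsequence making the finite supports $F_n$ pairwise disjoint is standard and costs only an $o(1)$ error in norm; and then your sign-selection set $B\subseteq A\in\I$ is the essential trick, which works precisely because disjointness lets a single $\I$-set encode all row-wise sign choices at once. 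After disjointification each fixed $k$ lies in at most one $F_n$, so the resulting matrix has $m_{n,k}=0$ for all but one $n$, and regularity is immediate.

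The only methodological difference from the cited route is that Marciszewski--Sobota first land in a density ideal (so the disjoint-blocks structure is built into the target) and the passage to matrix ideals is done separately, whereas you go straight to a regular matrix by manufacturing the disjoint-block structure on the BJNP side via gliding hump. Both approaches hinge on the same idea: converting the signed cancellation supplied by BJNP into total-variation control on every $A\in\I$, which requires exactly the kind of row-support disjointness you isolate.
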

Note that originally in  \cite{marciszewski-sobota} the space $X(\I)$ had the BJNP if and only if $\I$ could be extended to a density ideal, but extension to a density ideal was proved to be equivalent to extension to a matrix ideal in \cite[Section~3]{tryba-pathology}.

We say that a family $\cA$ of subsets of $\N$ is $\I$-almost disjoint if $\cA\subseteq\I^+$ and $A\cap B\in\I$ for all distinct $A,B\in\cA$.

Let $\I$ be an ideal and $\cA$ be an nonempty $\I$-almost disjoint family of subsets of $\N$. Consider the  space $\Psi_\I (\cA)=\N\cup\cA$ 
with the topology such that every point in $\N$ is isolated while every open neighborhood  of $A\in\cA$ is of the form $\{A\}\cup A\setminus B$, where $B\in\I$.
If $\I=\Fin$, the space $\Psi_{\Fin}(\cA)=\Psi(\cA)$ is known as \emph{Mr\'{o}wka-Isbell space} (see e.g. \cite{MR3822423}).

We can  apply Theorem~\ref{thm:ms-bjnp} to spaces $\Psi_\I(\cA)$.

\begin{corollary}
An ideal  $\I$   on $\N$ is an intersection of matrix ideals $\Longleftrightarrow$
$\Psi_\I(\cA)$ has the BJNP  for every $\I$-almost disjoint family $\cA$.
\end{corollary}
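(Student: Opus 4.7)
The plan is to reduce the BJNP question about $\Psi_\I(\cA)$ to the BJNP question for the simpler spaces $X(\I\restriction A)$, $A\in\cA$, and then combine Theorem~\ref{thm:ms-bjnp} with Theorem~\ref{thm:matrix-intersection-via-KAT-and-Z}. The key structural observation is that for each $A\in\cA$, the set $\{A\}\cup A$ is clopen in $\Psi_\I(\cA)$ (neighbourhoods of $A$ have the form $\{A\}\cup(A\setminus B)$ with $B\in\I$, and never meet $\N\setminus A$ or the other points of $\cA$), and is homeomorphic to $X(\I\restriction A)$ under the natural identification sending the isolated points of $A$ to $\N$ via a bijection and the point $A\in\cA$ to $\infty$. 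In particular, $\Psi_\I(\{A\})$ splits as the topological disjoint union $(\N\setminus A)\sqcup X(\I\restriction A)$ with $\N\setminus A$ discrete.

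For $(\Rightarrow)$, suppose $\I$ is an intersection of matrix ideals. Fix any $\I$-almost disjoint family $\cA$ and any $A\in\cA$; since $\cA\subseteq\I^+$, $A\notin\I$. Theorem~\ref{thm:matrix-intersection-via-KAT-and-Z} gives $\I\restriction A\leq_K\I_d$, and applying the same theorem to $\I\restriction A$ itself shows it is an intersection of matrix ideals, hence contained in one. Theorem~\ref{thm:ms-bjnp} then yields that $X(\I\restriction A)$ has the BJNP, and inheritance of BJNP by super-spaces produces BJNP for $\Psi_\I(\cA)$.

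For $(\Leftarrow)$, pick any $A\notin\I$ and apply the hypothesis to $\cA=\{A\}$. The main step is to push BJNP from $\Psi_\I(\{A\})=(\N\setminus A)\sqcup X(\I\restriction A)$ onto the right-hand component. Given a witnessing sequence $(\mu_n)$ on the disjoint union, write $\mu_n=\mu_n^D+\mu_n^Y$ according to the partition and test against functions extended by $0$ across the two clopen pieces; this shows that $\mu_n^Y(f)\to 0$ for every $f\in C^*(X(\I\restriction A))$ and $\mu_n^D(g)\to 0$ for every $g\in C^*(\N\setminus A)=\ell_\infty(\N\setminus A)$. If $\|\mu_n^Y\|\to 0$, then $\|\mu_n^D\|\to 1$ and the normalisations $\mu_n^D/\|\mu_n^D\|$ would furnish a BJNP sequence on the countable discrete space $D=\N\setminus A$; but any such sequence would be a norm-$1$ weakly null sequence in $\ell_1(D)$, contradicting Schur's theorem. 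Hence $\|\mu_n^Y\|$ is bounded below along a subsequence, and normalising yields a BJNP sequence on $X(\I\restriction A)$. Theorem~\ref{thm:ms-bjnp} then supplies a matrix ideal $\J\supseteq\I\restriction A$, and since every matrix ideal satisfies $\J\leq_K\I_d$ (apply Theorem~\ref{thm:matrix-intersection-via-KAT-and-Z} to $\J$ itself), we obtain $\I\restriction A\leq_K\J\leq_K\I_d$. As $A\notin\I$ was arbitrary, a final invocation of Theorem~\ref{thm:matrix-intersection-via-KAT-and-Z} concludes that $\I$ is an intersection of matrix ideals.

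The main obstacle will be the descent of BJNP through the disjoint-union decomposition in the backward direction; it rests on the non-trivial fact that countable discrete spaces lack BJNP, which follows from Schur's theorem applied to finitely supported measures viewed inside $\ell_1$. Once that is in hand, everything else is a routine translation between the topological side (Theorem~\ref{thm:ms-bjnp}) and the Kat\v{e}tov-order characterisation of intersections of matrix ideals (Theorem~\ref{thm:matrix-intersection-via-KAT-and-Z}) already established.
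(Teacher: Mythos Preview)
Your argument is correct, but the paper's proof is considerably shorter because it avoids the disjoint-union analysis and the appeal to Schur's theorem entirely. The key observation you are missing is that for $\cA=\{A\}$ the space $\Psi_\I(\{A\})$ is not merely a disjoint union involving $X(\I\restriction A)$: it is literally homeomorphic to $X(\J_A)$ for the ideal $\J_A=\{B\subseteq\omega:A\cap B\in\I\}$ on all of $\omega$, via the identity on $\N$ and $A\mapsto\infty$. Indeed, the basic neighbourhoods $\{\infty\}\cup(\omega\setminus C)$ with $C\in\J_A$ and the basic neighbourhoods $\{A\}\cup(A\setminus B)$ with $B\in\I$ generate the same filter at the non-isolated point. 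Since $\I\subseteq\J_A$ and $A\notin\J_A$, a single application of Theorem~\ref{thm:ms-bjnp} gives a matrix ideal containing $\I$ and missing $A$; ranging over $A\notin\I$ yields the intersection representation directly, with no need for Theorem~\ref{thm:matrix-intersection-via-KAT-and-Z} or any functional-analytic input.

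Your route does buy something: the fact that a countable discrete space lacks the BJNP (via Schur) and the transfer of BJNP across a clopen decomposition are of independent interest, and the paper in fact proves a general disjoint-union result (Theorem~\ref{thm:oplus-bjnp}) later by a direct, self-contained construction rather than through Schur. But for the present corollary your detour through $\I\restriction A\leq_K\J\leq_K\I_d$ and two invocations of Theorem~\ref{thm:matrix-intersection-via-KAT-and-Z} is unnecessary once one sees the identification $\Psi_\I(\{A\})\cong X(\J_A)$.
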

\begin{proof}
$(\impliedby)$: Let $A\not \in \I$ and $\cA=\{A\}\subseteq\I^+$. Then $\Psi_\I(\cA)$ is equal to $X(\J_A)$ with $\J_A = \{B\subseteq \omega: A\cap B\in \I\}$. Thus    $X(\J_A)$ has   the BJNP, hence by Theorem~\ref{thm:ms-bjnp}  the ideal $\J_A$ can be extended to a matrix ideal. Therefore, $\I$ is an intersection of matrix ideals.

$(\implies)$: Let $\cA\subseteq \I^+$ be an $\I$-almost disjoint family and take $A\in\cA$. Put $\cB=\{A\}\subseteq\I^+$. Since $\I$ is an intersection of matrix ideals, $\I\restriction A$ can be extended to a matrix ideal, so $X(\I\restriction A)$, has the BJNP, which means that  $\Psi_\I(\cB)$ has the BJNP as well. Moreover, $\Psi_\I(\cB)$ is a subspace of $\Psi_\I(\cA)$, hence $\Psi_\I(\cA)$ has the BJNP.
\end{proof}

\begin{theorem}\label{thm:oplus-bjnp}
Let $(Y_j)_{j\in J}$ be pairwise disjoint Tychonoff spaces. The following conditions are equivalent.
\begin{enumerate}
    \item $X=\bigsqcup\{Y_j : j\in J\}$ has the JNP (BJNP, resp.).
    \item  $Y_j$ has the JNP (BJNP, resp.) for some $j\in J$.
\end{enumerate}
\end{theorem}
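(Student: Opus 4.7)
The direction $(2)\Rightarrow(1)$ is immediate from the observation made just before the theorem: every $Y_{j}$ sits as a (clopen) subspace of $X$, so the JNP or BJNP of $Y_{j}$ transfers to $X$. For the converse, let $(\mu_n)$ be finitely supported signed measures on $X$ witnessing the (B)JNP: $\|\mu_n\|=1$ and $\mu_n(f)\to 0$ for every $f\in C(X)$ (resp.\ $C^*(X)$). Since $X$ is a disjoint topological sum, each $Y_j$ is clopen, so $\mu_n$ decomposes as a finite sum $\mu_n=\sum_{j}\mu_n|_{Y_j}$. Set $a_{n,j}=\|\mu_n|_{Y_j}\|$; then $\sum_j a_{n,j}=1$ with only finitely many nonzero summands. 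The plan is to split on the asymptotic behaviour of the numbers $a_{n,j}$.

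\emph{Case 1: some $j_0\in J$ satisfies $\limsup_n a_{n,j_0}>0$.} Pass to a subsequence so that $a_{n,j_0}\geq c>0$, and define $\nu_n=\mu_n|_{Y_{j_0}}/a_{n,j_0}$, a finitely supported measure on $Y_{j_0}$ of norm $1$. Given $f\in C(Y_{j_0})$ (respectively $f\in C^*(Y_{j_0})$), define $\tilde f\colon X\to\mathbb R$ by $\tilde f|_{Y_{j_0}}=f$ and $\tilde f|_{Y_{j}}=0$ for $j\neq j_0$; this function is continuous (indeed bounded if $f$ is) because $X$ carries the disjoint-union topology. Then $\mu_n(\tilde f)=a_{n,j_0}\nu_n(f)\to 0$ forces $\nu_n(f)\to 0$, so $Y_{j_0}$ has the (B)JNP.

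\emph{Case 2: $a_{n,j}\to 0$ for every $j\in J$.} We derive a contradiction. By a standard gliding hump extraction, choose inductively $n_1<n_2<\cdots$ and pairwise disjoint finite sets $J_k\subseteq J$ with $\sum_{j\in J_k}a_{n_k,j}\geq 1-1/k$: having chosen $J_1,\dots,J_{k-1}$, pick $n_k$ so large that the finite sum $\sum_{j\in J_1\cup\cdots\cup J_{k-1}}a_{n_k,j}<1/k$, and let $J_k$ be the (finite) set of indices $j\notin J_1\cup\cdots\cup J_{k-1}$ with $\mu_{n_k}|_{Y_j}\neq 0$. For each $j\in\bigcup_k J_k$, write the unique $k$ with $j\in J_k$ as $k(j)$, expand $\mu_{n_{k(j)}}|_{Y_j}=\sum_i c_i\delta_{x_i}$, and use the Tychonoff property of $Y_j$ to find $g_j\in C(Y_j,[-1,1])$ with $g_j(x_i)=\mathrm{sign}(c_i)$ at every atom (one constructs $g_j$ as a finite linear combination of Urysohn-style separating functions followed by truncation into $[-1,1]$). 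Then $\mu_{n_{k(j)}}|_{Y_j}(g_j)=a_{n_{k(j)}},_j$. Glue: define $g\colon X\to[-1,1]$ by $g|_{Y_j}=g_j$ for $j\in\bigcup_k J_k$ and $g|_{Y_j}=0$ otherwise, obtaining $g\in C^*(X)\subseteq C(X)$. For each $k$,
\[
\mu_{n_k}(g)=\sum_{j\in J_k}a_{n_k,j}+\sum_{j\notin J_k}\mu_{n_k}|_{Y_j}(g|_{Y_j}),
\]
with first term $\geq 1-1/k$ and the second bounded in modulus by $\sum_{j\notin J_k}a_{n_k,j}\leq 1/k$. Hence $|\mu_{n_k}(g)|\geq 1-2/k\not\to 0$, contradicting the (B)JNP.

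The main obstacle is Case 2 and in particular the gliding hump extraction. One must force a $1-o(1)$ fraction of the mass of $\mu_{n_k}$ to concentrate on a set $J_k$ of $Y_j$'s that has \emph{not been used before}, because the function $g_j$ built from $\mu_{n_{k(j)}}$ also contributes to $\mu_{n_\ell}(g)$ for $\ell\neq k(j)$, and one only controls the latter contributions by the crude bound $a_{n_\ell,j}$. The bound $\sum_{j\in J_k}a_{n_k,j}\geq 1-1/k$ (rather than, say, $\geq 1/2$) is exactly what makes the residual ``cross'' contribution $\leq 1/k$ and hence negligible. A minor subsidiary point is the use of the Tychonoff axiom to interpolate prescribed $\pm 1$ values at the finitely many atoms of $\mu_{n_{k(j)}}|_{Y_j}$ while keeping $\|g_j\|_\infty\leq 1$, but this is routine.
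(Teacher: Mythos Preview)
Your proof is correct and follows the same overall architecture as the paper's: the same case split on whether some $\|\mu_n|_{Y_{j_0}}\|$ fails to tend to zero, with Case~1 handled by extending functions by zero off $Y_{j_0}$, and Case~2 by a gliding-hump contradiction via a hand-built $g\in C^*(X)$.

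The execution of Case~2 differs in a way worth recording. The paper fixes nested clopen domains $D_1\subseteq D_2\subseteq\cdots$ (each a finite union of $Y_j$'s), chooses $n_l$ with $\delta_{n_l}(D_{l-1})<1/8$, and builds $f\in C(X,[0,1])$ by successive Tychonoff extensions so that $f$ equals $1$ on the heavier of $X_{n_l}^+,X_{n_l}^-$ and $0$ on the other half, outside $D_{l-1}$; this yields $|\mu_{n_l}(f)|\ge 1/4$. Your version instead selects \emph{pairwise disjoint} finite index sets $J_k\subseteq J$ carrying mass $\ge 1-1/k$ for $\mu_{n_k}$, and on each $Y_j$ with $j\in J_k$ builds $g_j\in C(Y_j,[-1,1])$ matching the signs of the atoms of $\mu_{n_k}|_{Y_j}$, so that $\mu_{n_k}|_{Y_j}(g_j)$ recovers the full variation $a_{n_k,j}$. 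Because the $J_k$ are disjoint there is no iterative extension step, and because each $J_k$ exhausts the support of $\mu_{n_k}$ outside $J_1\cup\cdots\cup J_{k-1}$, the cross terms are automatically controlled by $1/k$, giving $|\mu_{n_k}(g)|\ge 1-2/k$. Your organization is a bit cleaner and yields a sharper bound; the paper's version avoids the sign-interpolation step at the cost of the positive/negative split and the weaker constant $1/4$. Both are standard gliding-hump implementations of the same idea.
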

\begin{proof}
$(2)\Longrightarrow (1)$: Obvious, because every $Y_j$ is a subspace of $X$.

$(1)\Longrightarrow (2)$:   Let $\{\{x_i^n:i\leq k_n\}:n\in\N\}$ and  $\{\{a_i^n:i\leq k_n\}:n\in\N\}$ be the witness for the JNP (BJNP, resp.), i.e. for every $f\in C(X)$ ($f\in C^*(X)$, resp.) we have 
$$\lim_{n\to\infty}\sum_{i\leq k_n} a_i^n f(x_i^n)=0 \quad  \text{ and }\quad \sum_{i\leq k_n}|a_i^n|=1 \text{ for every $n\in\N$}. $$  
For each $n\in\N$, denote the set $\{x_i^n:i\leq k_n\}$ by $X_n$, the set $\{x_i^n: a_i^n>0\}$ by $X_n^+$ and the  set $\{x_i^n: a_i^n<0\}$ by $X_n^-$.

For each $n\in\N$ define the measure $\delta_n:\cP(X)\rightarrow [0,1]$ by 
$$\delta_n(A)=\sum_{\{i\leq k_n: x_i^n\in A\}} |a_i^n|$$
for each $A\subseteq X$. We have two cases: either there exists $j_0\in J$ such that $\delta_n(Y_{j_0})$ does not tend to zero or $\lim_{n\to\infty}\delta_n(Y_j)=0$ for all $j\in J$.

In the first case, we will show that $Y_{j_0}$ has the JNP (BJNP, resp.). Assume otherwise and pick $\alpha>0$ and a sequence $n_1<n_2<\ldots$ such that 
$$\lim_{l\to\infty}\delta_{n_l}(Y_{j_0})=\alpha \quad \text{ and } \quad \delta_{n_l}(Y_{j_0})>0 \text{ for each $l\in\N$}.$$

Since $Y_{j_0}$ does not have the JNP (BJNP, resp.), there exists a function $g\in C(Y_{j_0})$ ($g\in C^*(Y_{j_0})$) such that 
$$\limsup_{l\to\infty} \left| \sum_{\left\{i\leq k_{n_l}: x_i^{n_l}\in Y_{j_0} \right\}} \frac{a_i^{n_l} g(x_i^{n_l})}{\delta_{n_l} (Y_{j_0})}    \right| >0.  $$
Define the function $f:X\to \R$ by
$$f(x)=
    \begin{cases}
    g(x)&\text{if } x\in Y_{j_0}, \\    
    0&\text{otherwise.}
    \end{cases}$$
Clearly, $f\in C(X)$ ($f\in C^*(X)$). However,
$$  \left| \sum_{i\leq k_{n_l}} a_i^{n_l} f(x_i^{n_l})    \right|=   \left| \delta_{n_l} (Y_{j_0}) \sum_{\left\{i\leq k_{n_l}: x_i^{n_l}\in Y_{j_0} \right\}} \frac{a_i^{n_l} g(x_i^{n_l})}{\delta_{n_l} (Y_{j_0})}    \right|,$$
which does not tend to zero, a contradiction.

In the second case, observe that for every $n\in\N$ we have $\delta_n(X_n^+)\geq 1/2$ or  $\delta_n(X_n^-)\geq 1/2$. Put
$$Z_n=
    \begin{cases}
    X_n^+&\text{if } \delta_n(X_n^+)\geq 1/2, \\    
    X_n^-&\text{otherwise.}
    \end{cases}$$
We will obtain the thesis by showing that the second case contradicts the assumption that $X$ has the JNP (BJNP, resp.). We will do so by finding a function $f\in C^*(X)$ such that for infinitely many $n\in\N$ we have
$$\left|\sum_{i\leq k_n} a_i^n f(x_i^n)\right|\geq \frac{1}{4} $$

We will now inductively construct sequences of natural numbers $(n_m)$, sets $(D_m)$ with $\lim_{n\to\infty}\delta_n(D_m)=0$ and functions $(f_m)$ with $\dom(f_m)\subseteq X$ 
and $\ran(f_m)\subseteq[0,1]$.

First, let $n_1=1$ and $D_1=\bigcup \{Y_j: Y_j\cap X_1\not =\emptyset\}$. Because each $Y_j$ is Tychonoff and $X_1$ is finite, there exists a continuous function $f_1:D_1\rightarrow [0,1]$ such that $f_1(x)=1$ for $x\in Z_1$ and $f_1(x)=0$ for $x\in X_1\setminus Z_1$.
Since $X_1$ is finite, $D_1$ is made of finitely many $Y_j$, thus $\lim_{n\to\infty} \delta_n(D_1)=0$ as $\delta_n(Y_j)$ tends to zero for every $j\in J$.

Suppose we have constructed $n_m, f_m, Z_m$ for $m< l$ for some $l\in\N$. Then we put as $n_{l}$ such $n\in\N$ that $\delta_n(D_{l-1})< 1/8$. Let $D_l=\bigcup \{Y_j: \bigcup_{i\leq n_l} X_i\cap Y_j\not =\emptyset\}$.
Because $f_{l-1}$ is continuous on $D_{l-1}$, which is a closed subset of $D_l$, $X_{n_l}$ is finite and each $Y_j$ is Tychonoff, there exists a continuous function $f_l:D_l\rightarrow [0,1]$ such that 
\begin{itemize}
\item $f_l\upharpoonright D_{l-1}=f_{l-1}$;
\item $f_l(x)=1$ for $x\in Z_{n_l}\setminus D_{l-1}$;
\item $f_l(x)=0$ for $x\in (X_{n_l}\setminus Z_{n_l}) \setminus D_{l-1}$.
\end{itemize}
Once again, $D_l$ is made of finitely many $Y_j$, thus $\lim_{n\to\infty} \delta_n(D_l)=0$.

We can now define the function $f:X\rightarrow[0,1]$ in such way that 
\begin{itemize}
\item $f\upharpoonright D_{l}=f_{l}$ for every $l\in\N$;
\item $f(x)=0$ for $x\in X \setminus\bigcup_{l\in\N} D_{l}$.
\end{itemize}
It is easy to see that $f\in C^*(X)$ as each $f_l$ is continuous while $\bigcup_{l\in\N} D_{l}$ and each $D_l$  are all clopen subsets of $X$.

On the other hand, for all $l\geq 2$ we obtain
\begin{equation*}
\begin{split}
\left|\sum_{i\leq k_{n_l}} a_i^{n_l} f\left(x_i^{n_l}\right)\right|
&\geq 
\left| \sum_{\left\{i\leq k_{n_l}:x_i^{n_l}\in D_{l}\setminus D_{l-1} \right\}}a_i^{n_l}f\left(x_i^{n_l}\right) \right| -\left| \sum_{\left\{i\leq k_{n_l}:x_i^{n_l}\in  D_{l-1} \right\}}a_i^{n_l} f\left(x_i^{n_l}\right) \right|
\\&\geq
\left| \sum_{\left\{i\leq k_{n_l}:x_i^{n_l}\in Z_{n_l}\setminus D_{l-1} \right\}}a_i^{n_l} \right| - \sum_{\left\{i\leq k_{n_l}:x_i^{n_l}\in  D_{l-1} \right\}} \left|a_i^{n_l}  \right|
\\&=
\delta_{n_l}(Z_{n_l}\setminus D_{l-1}) - \delta_{n_l}( D_{l-1})\geq\left(\frac{1}{2}-\frac{1}{8}\right)-\frac{1}{8}=\frac{1}{4}.
\end{split}
\end{equation*}
\end{proof}

\begin{corollary}
Let $\cA$ be a countable $\I$-almost disjoint family. Then the following conditions are equivalent.
\begin{enumerate}
\item $\Psi_\I(\cA)$ has the BJNP.

\item There exists $A\in\cA$ such that  $X(\I\upharpoonright A)$ has the BJNP.

\item There exists $A\in\cA$ such that $\I\upharpoonright A$ extends to a matrix ideal.
\end{enumerate}
\end{corollary}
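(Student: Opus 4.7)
The plan is to realize both $\Psi_\I(\cA)$ and each $X(\I \upharpoonright A_n)$ as topological disjoint unions of copies of a common family of subspaces (plus discrete leftovers), apply Theorem~\ref{thm:oplus-bjnp} on both sides, and match the pieces up; the equivalence $(2) \Leftrightarrow (3)$ will then be immediate from Theorem~\ref{thm:ms-bjnp} applied to $\I \upharpoonright A$.

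First I would enumerate $\cA = \{A_n : n \in \omega\}$ and set $A'_n = A_n \setminus \bigcup_{k<n} A_k$ and $R = \N \setminus \bigcup_{n \in \omega} A_n$. Since $\cA$ is $\I$-almost disjoint, $A_n \setminus A'_n = \bigcup_{k<n}(A_n \cap A_k) \in \I$, so $A'_n \notin \I$; the $A'_n$ are pairwise disjoint and, together with $R$, partition $\N$. Next I would verify two clopen decompositions. On the one hand, each $\{A_n\} \cup A'_n$ is clopen in $\Psi_\I(\cA)$: it is a basic open neighborhood of $A_n$ (because $A_n \setminus A'_n \in \I$), and every other limit $A_m$ admits the neighborhood $\{A_m\} \cup (A_m \setminus (A_m \cap A_n))$ disjoint from $A'_n$ (using $A_m \cap A_n \in \I$). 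Its subspace topology coincides, under the obvious bijection $A_n \mapsto \infty$, with that of $X(\I \upharpoonright A'_n)$, yielding
\[ \Psi_\I(\cA) \;=\; \bigsqcup_{n \in \omega} X(\I \upharpoonright A'_n) \;\sqcup\; R. \]
On the other hand, $A_n \setminus A'_n \in \I \upharpoonright A_n$ makes $\{\infty\} \cup A'_n$ clopen in $X(\I \upharpoonright A_n)$ and homeomorphic to $X(\I \upharpoonright A'_n)$, so
\[ X(\I \upharpoonright A_n) \;=\; X(\I \upharpoonright A'_n) \;\sqcup\; (A_n \setminus A'_n), \]
with the second summand discrete.

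Applying Theorem~\ref{thm:oplus-bjnp} to both decompositions and discarding the discrete pieces $R$ and $A_n \setminus A'_n$ (see below), both (1) and ``some $X(\I \upharpoonright A_n)$ has the BJNP'' reduce to ``some $X(\I \upharpoonright A'_n)$ has the BJNP'', which gives $(1) \Leftrightarrow (2)$; then $(2) \Leftrightarrow (3)$ is Theorem~\ref{thm:ms-bjnp}. The main technical input is the assertion that no countable (finite or infinite) discrete Tychonoff space $Y$ has the BJNP: for a hypothetical sequence $\mu_n = \sum_i a_i^n \delta_{x_i}$ with $\|\mu_n\| = 1$, testing against $\chi_{\{x_i\}} \in C^*(Y)$ forces every coordinate $a_i^n$ to vanish as $n \to \infty$, after which Phillips's lemma (or elementary compactness in the finite case) forces $\|\mu_n\| \to 0$, a contradiction. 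I would invoke this as a standard consequence of the Grothendieck property of $\ell^\infty$; once it is in hand, the rest of the argument is just routine bookkeeping about neighborhoods in $\Psi_\I(\cA)$ and $X(\I \upharpoonright A_n)$.
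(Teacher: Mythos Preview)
Your argument is correct and follows essentially the same route as the paper: disjointify $\cA$, recognize $\Psi_\I(\cA)$ as a topological sum of the pieces $X(\I\upharpoonright A'_n)$ (plus a discrete remainder), apply Theorem~\ref{thm:oplus-bjnp}, and finish with Theorem~\ref{thm:ms-bjnp}. The paper's proof is terser: it simply asserts the existence of a disjoint family $\cB=\{B_n\}$ with $\Psi_\I(\cA)\cong\bigsqcup_n X(\I\upharpoonright B_n)$ and $X(\I\upharpoonright B_n)\cong X(\I\upharpoonright A_n)$, without spelling out the $B_n$ or handling the residual set $R=\N\setminus\bigcup\cA$. Your version is more explicit on this point: rather than claiming a direct homeomorphism between $X(\I\upharpoonright A_n)$ and $X(\I\upharpoonright A'_n)$, you decompose each as a further disjoint union with a discrete leftover and invoke the fact that countable discrete spaces lack the BJNP. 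That last fact is indeed Schur's property of $\ell^1$ (weak convergence implies norm convergence), which is what your appeal to Phillips's lemma amounts to; note that you actually need $\mu_n(\chi_A)\to 0$ for all $A\subseteq Y$, not just for singletons, to feed into Phillips---but this is immediate since every $\chi_A\in C^*(Y)$ when $Y$ is discrete.
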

\begin{proof}

$(2)\Longleftrightarrow(3)$: follows from Theorem~\ref{thm:ms-bjnp}.

$(1)\Longleftrightarrow(2)$: follows from Theorem~\ref{thm:oplus-bjnp} as for every countable $\I$-almost disjoint family $\cA=\{A_n:n\in\N\}$ there exists a countable pairwise disjoint family $\cB=\{B_n:n\in\N\}$ such that $\Psi_\I(\cA)$ is homeomorphic to $\bigsqcup_{n\in \N} X(\I\upharpoonright B_n)$ while $X(\I\upharpoonright B_n)$ is homeomorphic to $X(\I\upharpoonright A_n)$ for every $n\in\N$. 
\end{proof}


\section{The intersection of matrix ideals can be non-Borel}
\label{sec:non-borel-intersection}

In this section, we will prove  the consistency of existence of a non-Borel ideal that is an intersection of matrix ideals (Corollary~\ref{cor:non-Borel-intersection-of-matrix}), which consistently answers \cite[Question~3]{ft-mazur}.
The consistency is expressed with the aid of 
the \emph{tower number}  $\mathfrak{t}$ which is the smallest length of a tower of infinite subsets of $\omega$, and in the proof we will also use the \emph{almost disjointness number} $\mathfrak{a}$ which is the smallest cardinality of any MAD family on $\omega$  (i.e.~an infinite maximal almost disjoint family of infinite subsets of $\omega$).
For more on these cardinals, see e.g.~\cite{MR2768685}.

For a MAD family  $\cA$, we write  $\I(\cA)$ to  denote the ideal generated by $\cA$:
$$B\in \I(\cA)\iff B\setminus (A_1\cup \dots\cup A_n) \text{ is finite for some } A_1,\dots,A_n\in \cA.$$

In \cite[Theorem~3.9]{MR2017358}, the authors proved that 
assuming $\mathfrak{t}=\continuum$, there  exists a MAD family $\cA$ such that $\I(A)$ is K-homogeneous. 
We will use a slightly modified version of the above result -- we will additionally require  that the constructed almost disjoint family consists of sets from a tall ideal (see Lemma~\ref{lem:hr1} and Theorem~\ref{thm:MAD-K-homo}). The proof will be almost the same as in the original result, but we will provide it for the sake of completeness.

Throughout this section, we will assume that every family $\cF\subseteq\omega^\omega$ consists of injections.
For functions $f,g:X\rightarrow Y$ we write $f=^*g$ if $f(x)=g(x)$ for all but finitely many $x\in X$. Moreover, we write $A\subseteq^* B$ if $B\setminus A$ is finite.   

Let $\cA$ be an almost disjoint family and let $\cF\subseteq \omega^\omega$.
We say that  $\cA$ \emph{respects} $\cF$ if $f^{-1}[A]\in\I(\cA)$ for all $f\in\cF$ and all $A\in\cA$.

\begin{lemma}\label{lem:hr1}
Let $\I$ be a tall ideal. Let $\cA\subseteq\I$ be an almost disjoint family that respects $\cF\subseteq\omega^\omega$. 
Assume also that $|\cA|<\mathfrak{t}$,  $|\cF|<\mathfrak{t}$ and $X\in\cP(\omega)\setminus \I(\cA)$.
Then there exists an almost disjoint family $\cB\subseteq\I$ such that $\cA\subseteq\cB$, $|\cB|<\mathfrak{t}$, $\cB$ respects $\cF$ and $\cB\cap [X]^\omega\not=\emptyset$.
\end{lemma}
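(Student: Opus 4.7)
Set $\kappa=\max(|\cA|,|\cF|,\aleph_0)<\mathfrak{t}$ and enumerate $\cA=\{A_\alpha:\alpha<\kappa\}$ and $\cF=\{f_\alpha:\alpha<\kappa\}$. The plan is a transfinite recursion of length $\kappa$, exploiting $\kappa<\mathfrak{t}$ for pseudo-intersections at limit stages and tallness of $\I$ to stay inside $\I$ throughout; it proceeds in three phases.

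First, I would produce an infinite $B_0\in[X]^\omega\cap\I$ almost disjoint from every $A\in\cA$. Since $X\notin\I(\cA)$, each set $X\setminus(A_{\alpha_0}\cup\cdots\cup A_{\alpha_n})$ is infinite, and tallness yields an infinite member of $\I$ inside each. A tower-type recursion of length $\kappa$ with pseudo-intersections at limit stages (trimmed by tallness to stay in $\I$) produces $B_0$.

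Next, I would refine $B_0$ to control preimages under $\cF$. By recursion on $\alpha<\kappa$, build a sequence $(B_\alpha)$ of infinite subsets of $B_0$ in $\I$, each almost disjoint from $\cA$, with $B_\alpha\setminus B_\beta$ finite for $\beta<\alpha$, satisfying the invariant that $f_\beta^{-1}[B_\alpha]\cap B_\alpha$ is finite for every $\beta\leq\alpha$. Since each $f_\beta$ is an injection, this invariant is preserved whenever we pass to a subset mod finite, so limit stages reduce to ordinary pseudo-intersections of length $<\mathfrak{t}$. At the successor stage $\alpha\to\alpha+1$, consider the directed graph on $B_\alpha$ with an edge from $n$ to $f_{\alpha+1}(n)$ whenever both lie in $B_\alpha$; because $f_{\alpha+1}$ is an injection, every vertex has in- and out-degree at most $1$, so connected components are (finite or one-sided infinite) paths and finite cycles, and a greedy alternating selection yields an infinite independent set $B_{\alpha+1}\subseteq B_\alpha$, which by construction satisfies $f_{\alpha+1}^{-1}[B_{\alpha+1}]\cap B_{\alpha+1}=\emptyset$. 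Let $B^*$ be the pseudo-intersection of the whole sequence: it lies in $[X]^\omega\cap\I$, is almost disjoint from $\cA$, and has $f^{-1}[B^*]\cap B^*$ finite for every $f\in\cF$.

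Finally, I would extend $\cA\cup\{B^*\}$ to $\cB$ by adding further sets $\cD\subseteq\I$ (of total size $<\mathfrak{t}$) chosen, via tallness, as infinite members of $\I$ that cover---modulo $\I(\cA)$---each $f^{-1}[B^*]\setminus B^*$ for $f\in\cF$, and (by iterated closure) the preimages $h^{-1}[B^*]\setminus B^*$ for $h$ in the monoid $M\subseteq\omega^\omega$ generated by $\cF$; almost-disjointness from the previous family is arranged by trimming with a finite subcollection of $\cA$. The main obstacle is this third phase: simultaneously maintaining almost-disjointness, membership in $\I$, and coverage through the $\cF$-closure requires a delicate nested transfinite construction, but it is feasible because $|M|\leq\kappa<\mathfrak{t}$, so $\mathfrak{t}$ again furnishes pseudo-intersections and tallness keeps sets inside $\I$.
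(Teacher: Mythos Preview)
There is a genuine gap in Phase~3, and it stems from a missing idea that should have been built into Phase~2.

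You want $\cB\subseteq\I$ and $f^{-1}[B^*]\in\I(\cB)$ for every $f$ in the composition-closure $M$ of $\cF$. But $\I(\cB)\subseteq\I$ (a finite union of $\I$-sets plus a finite set is in $\I$), so this forces $f^{-1}[B^*]\in\I$ for every such $f$. Your construction never arranges this. Tallness gives you an infinite \emph{subset} of $f^{-1}[B^*]$ lying in $\I$, not a finite \emph{cover} by $\I$-sets; the phrase ``infinite members of $\I$ that cover---modulo $\I(\cA)$---each $f^{-1}[B^*]$'' is exactly where the argument breaks, since such a cover exists only if $f^{-1}[B^*]$ was already in $\I$. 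Nothing in Phases~1--2 guarantees that.

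The paper fixes this at the tower stage, not afterwards: at step $\alpha$ one shrinks the current approximation $S$ so that $f_\alpha^{-1}[S]\in\I$, by applying tallness to the \emph{preimage} $f_\alpha^{-1}[S]$ to get an infinite $D\in\I$ and then replacing $S$ by $f_\alpha[D]$ (so $f_\alpha^{-1}[f_\alpha[D]]=D\in\I$ by injectivity). In the same pass the paper also forces, for all pairs $\beta,\gamma\le\alpha$, that $f_\beta^{-1}[T_\alpha]$ and $f_\gamma^{-1}[T_\alpha]$ are almost disjoint or almost equal; your Phase~2 only handles the special pair $(\mathrm{id},f_\alpha)$ via the graph argument, and your Phase~3 gives no mechanism for mutual almost-disjointness among the new sets themselves. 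With those two refinements in place one can simply put $\cB=\cA\cup\{T\}\cup\{f_\alpha^{-1}[T]:f_\alpha^{-1}[T]\notin\I(\cA)\}$ and no separate ``covering'' phase is needed.
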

\begin{proof}
If $\cA\cap [X]^\omega\not=\emptyset$ then we can put $\cB=\cA$ and we are done. Hence, we can assume that $\cA\cap [X]^\omega=\emptyset$.

First, notice that since $X\not\in\I(\cA)$ and $|\cA|<\mathfrak{t}\leq\mathfrak{a}$, the set $\{A\cap X: A\in\cA \land |A\cap X|=\omega\}$ cannot be a MAD family on $X$,
so there exists a set $Y\in[X]^\omega$ such that $Y\cap A\in{\fin}$ for every $A\in\cA$. 
In particular, $Y\notin \I(\cA)$. 
It follows that we can assume without loss of generality that  $A\cap X\in{\fin}$ for every $A\in\cA$.

Second, we can assume without loss of generality that the identity function $f(n)=n$ belongs to $\cF$.

Let $\cF'$ be the closure of $\cF$ under compositions. Observe that $\cA$ respects $\cF'$. Indeed, for any $f,g\in \cF$ and $A\in\cA$ we have
$$(f\circ g)^{-1}[A]=g^{-1}\left[f^{-1}[A]\right]\subseteq \bigcup_{i=1}^n g^{-1} [A_i]  $$
for some $A_i\in \cA$ as $f^{-1}[A]\in\I(\cA)$. Since $g^{-1} [A_i]\in\I(\cA)$, we obtain $(f\circ g)^{-1}[A]\in\I(\cA)$.

Put $\{f_\alpha: \alpha<\kappa\}$ as an  enumeration of elements of $\cF'$ such that $f_0$ is the identity function. Note that $\cF'$ consists of injections and that  $|\cF'|\leq |[\cF]^{<\omega}|<\mathfrak{t}$.

We will now construct inductively a sequence $(T_\alpha)_{\alpha<\kappa}$ such that
\begin{enumerate}
\item $T_{\alpha}\in [X]^\omega$ for every $\alpha<\kappa$;
\item if $\alpha>\beta$ then $T_\alpha\subseteq^* T_\beta$;
\item for each $\alpha<\kappa$, we have $f^{-1}_\alpha[T_\alpha]\in \I(\cA)$  or 
$$f^{-1}_\alpha[T_\alpha] \in \I \text{ and }  f^{-1}_\alpha[T_\alpha]\cap A\in {\fin} \text{ for all } A\in\cA ;$$
\item for every $\beta,\gamma\leq \alpha<\kappa$, if $f^{-1}_\beta[T_\alpha]\not\in\I(\cA)$ and  $f^{-1}_\gamma[T_\alpha]\not\in\I(\cA)$ then  $$ f^{-1}_\beta[T_\alpha]\cap f^{-1}_\gamma[T_\alpha]\in{\fin} \quad \text{or} \quad  f_\beta^{-1}\restriction T_\alpha=^* f_\gamma^{-1}\restriction T_\alpha.$$
\end{enumerate}

Fix $\alpha<\kappa$, and assume we have constructed $T_\beta$ for $\beta<\alpha$.
Since $|\alpha|<\mathfrak{t}$, there exists a set $S\in [X]^\omega$ such that $S\subseteq^* T_\beta$ for all $\beta<\alpha$.

If there exists an infinite $C\subseteq S $ such that $f_\alpha^{-1}[C]\in\I(\cA)$ then put $S_0=C$.

If there is no such $C$ then $f^{-1}_\alpha[S]\cap A\in {\fin}$ for all $A\in\cA$ as otherwise, should there be $A_0\in\cA$ such that $f^{-1}_\alpha[S]\cap A_0\not\in {\fin}$ then $C=f_\alpha\left[f^{-1}_\alpha[S]\cap A_0 \right]$ would be an infinite subset of $S$ with $f_\alpha^{-1}[C]\subseteq A_0\in\I(\cA)$.

In this case, 
notice that we have $f_\alpha^{-1}[S]\not\in\I(\cA)$, thus $f_\alpha^{-1}[S]\not\in{\fin}$. Since $\I$ is tall, there exists an infinite $D\subseteq f_\alpha^{-1}[S]  $ such that $D\in\I$. We put $S_0=f_\alpha[D]$. Notice that $S_0$ is an infinite subset of $S$ such that $f_\alpha^{-1}[S_0]\in \I$.

Next, let  $(\alpha+1)\times (\alpha+1)=\{(\beta_\xi,\gamma_\xi): \xi<\lambda \}$ with $\lambda =|(\alpha+1)\times (\alpha+1)|<\mathfrak{t}$. We will now construct inductively a sequence $(S_\xi)_{\xi<\lambda}$ such that 
$S_\xi\subseteq^* S_\eta$ for $\eta<\xi$
and 
if $f^{-1}_{\beta_\xi}[S_\xi]\not\in\I(\cA)$ and  $f^{-1}_{\gamma_\xi}[S_\xi]\not\in\I(\cA)$ then  
$$ f^{-1}_{\beta_\xi}[S_{\xi+1}]\cap f^{-1}_{\gamma_\xi}[S_{\xi+1}]\in{\fin} \quad \text{or} \quad f_{\beta_\xi}^{-1}\upharpoonright S_{\xi+1}=^* f_{\gamma_\xi}^{-1}\upharpoonright S_{\xi+1}. $$
We have already defined $S_0$. 

For a given $\xi<\lambda$, suppose we have defined $S_\xi$. We have two cases.

In the first case, there exists $C\in [S_\xi]^\omega$ such  that $f^{-1}_{\beta_\xi}[C]\cap f^{-1}_{\gamma_\xi}[C]\in{\fin}$. Then we put $S_{\xi+1}=C$.

In the second case, for all $C\in [S_\xi]^\omega$ we have $f^{-1}_{\beta_\xi}[C]\cap f^{-1}_{\gamma_\xi}[C]\not\in{\fin}$. Since $f^{-1}_{\beta_\xi}$ and $f^{-1}_{\gamma_\xi}$ are injections, we can realize that we have $f_{\beta_\xi}^{-1}\upharpoonright S_{\xi}=^* f_{\gamma_\xi}^{-1}\upharpoonright S_{\xi}$. We put $S_{\xi+1}=S_\xi$.

For a limit $\eta<\lambda$, suppose we have defined all $S_\xi$ for $\xi < \eta$. Then we find such $S_\eta$ that $S_\eta\subseteq^* S_\xi$ for all $\xi<\eta$ -- we can find such, because $\eta<\lambda<\mathfrak{t}$.

Finally, we put as $T_\alpha$ an infinite subset of $S_0$ such that $T_\alpha\subseteq^* S_\xi$ for all $\xi<\lambda$.

Now that we have constructed the sequence $(T_\alpha)$, we  may find the set $T\in[X]^\omega$ such that $T\subseteq^* T_\alpha$ for every $\alpha<\kappa$. Since $\I$ is tall, we may also demand that $T\in\I$.

As a next step we define the family 
$$\cB=\cA\cup \{T\}\cup\left\{f^{-1}_\alpha[T]: \alpha<\kappa,\, f^{-1}_\alpha[T]\not\in\I(\cA),\, \forall \beta<\alpha \,( f^{-1}_\alpha[T]\not=^*f^{-1}_\beta[T])  \right\}.$$ 

It is easy to see that $\cA\subseteq\cB$, $\cB\subseteq\I$ and $|\cB|\leq |\cA| +1 + \kappa<\mathfrak{t}$. Moreover, $T\in \cB\cap [X]^\omega$.  

Next, notice that $T\subseteq X$, thus $T\cap A\in {\fin}$ for every $A\in\cA$. Moreover, notice that if  $f^{-1}_\alpha[T]\in \cB$, $f^{-1}_\beta[T]\in \cB$ are two distinct sets  for some $\alpha>\beta$ then $f^{-1}_\alpha[T]\cap f^{-1}_\beta[T]\in {\fin}$ as $f^{-1}_\alpha[T_\alpha]\cap f^{-1}_\beta[T_\alpha]\in {\fin}$ and $T\subseteq^*T_\alpha$. Additionally, if $f^{-1}_\alpha[T]\in\cB$ then $f^{-1}_\alpha[T]\cap A\in {\fin}$ for every $A\in\cA$ as  $f^{-1}_\alpha[T_\alpha]\cap A\in {\fin}$ and $T\subseteq^*T_\alpha$. 
Moreover, $f_0^{-1}[T]=T\not\in\I(\cA)$ as  $T$ is an infinite subset of $X$ and  $A\cap X\in{\fin}$ for every $A\in\cA$. 
Therefore, $\cB$ is an almost disjoint family. 

To finish the proof, we need to show that $\cB$ respects $\cF$. Pick any $f\in\cF$ and $B\in\cB$. Obviously, $f=f_\alpha$ for some $\alpha<\kappa$.

If $B\in\cA$ then $f^{-1}[B]\in\I(\cA)\subseteq\I(\cB)$, because $\cA$ respects $\cF$. 

If $B=T$  then either $f^{-1}[T]\in\I(\cA)\subseteq\I(\cB)$ or  $f^{-1}[T]\in\cB\subseteq\I(\cB)$ or there exists $\beta<\alpha$ such that $f^{-1}[T]=^*f^{-1}_\beta[T]\in\cB$, thus $f^{-1}[T]\in\I(\cB)$. 

If $B=f_\beta^{-1}[T]$ for some $\beta<\kappa$, then there exists some $\gamma<\kappa$ such that $f_\gamma^{-1}=f_\alpha^{-1}\circ f_\beta^{-1}$. It follows that $f^{-1}[B]=f_\gamma^{-1}[T]$, which belongs to $\I(\cB)$ by the reasoning presented in the previous case. 
\end{proof}

\begin{lemma}[{\cite[Lemma~3.11]{MR2017358}}]
\label{lem:hr2}
Let $\cA$ be an almost disjoint family that respects $\cF\subseteq\omega^\omega$. 
Assume also that $|\cA|<\mathfrak{a}$,   and $X\not\in\I(\cA)$.
Then there exists an injection $f:\omega\rightarrow X$ such that $\cA$ respects $\cF\cup\{f\}$.    
\end{lemma}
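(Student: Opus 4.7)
The plan is to construct $f$ as a bijection $\omega\to Y$ for some $Y\in[X]^\omega$ chosen so that $|Y\cap A|<\omega$ for every $A\in\cA$; then $f^{-1}[A]$, being in bijection with $Y\cap A$ via $f$, is finite and hence lies in $\I(\cA)$ (because $\fin\subseteq\I(\cA)$). This is the only new instance of the ``respects'' condition that must be verified for $\cF\cup\{f\}$; the condition on elements of $\cF$ is the unchanged hypothesis.

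To produce such a $Y$, I would consider the almost disjoint family $\cA'=\{A\cap X:A\in\cA,\,|A\cap X|=\omega\}$ of infinite subsets of the infinite set $X$. Since $|\cA'|\leq|\cA|<\mathfrak{a}$, the family $\cA'$ cannot be an infinite MAD family on $X$, because every infinite MAD family on a countably infinite set has cardinality at least $\mathfrak{a}$. If $\cA'$ is infinite, this already yields $Y\in[X]^\omega$ with $|Y\cap A'|<\omega$ for every $A'\in\cA'$. If $\cA'$ is finite, then $X\setminus\bigcup\cA'$ must be infinite (otherwise one would have $X\in\I(\cA)$, against the hypothesis), so any infinite $Y\subseteq X\setminus\bigcup\cA'$ works. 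In either case, any $A\in\cA$ with $|A\cap X|<\omega$ has $Y\cap A\subseteq A\cap X$ finite automatically; thus $|Y\cap A|<\omega$ holds for every $A\in\cA$.

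Taking $f\colon\omega\to Y$ to be any bijection completes the argument. There is essentially no obstacle: the entire content lies in the standard cardinal-invariant fact (obtainable from $|\cA|<\mathfrak{a}$ together with $X\notin\I(\cA)$) that produces $Y$, and the family $\cF$ plays no role in the construction beyond being carried along unchanged.
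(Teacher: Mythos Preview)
Your proof is correct. The paper does not supply its own proof of this lemma; it simply cites the original source \cite[Lemma~3.11]{MR2017358}, and your argument is the standard one for that result.
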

    
\begin{theorem}
\label{thm:MAD-K-homo}
Assume $\mathfrak{t}=\continuum$. 
If  $\I$ is a tall ideal then there exists a MAD family $\cA\subseteq \I$ such that $\I(\cA)$ is $K$-homogeneous. 
\end{theorem}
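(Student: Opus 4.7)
The plan is to adapt the Hru\v{s}\'ak--Garc\'ia Ferreira construction from \cite[Theorem~3.9]{MR2017358} almost verbatim, with the only modification being that at every step we stay inside the tall ideal $\I$, which is precisely what Lemma~\ref{lem:hr1} allows. Fix an enumeration $\cP(\omega)=\{X_\alpha:\alpha<\continuum\}$ and build by transfinite recursion on $\alpha<\continuum$ a $\subseteq$-increasing chain of almost disjoint families $\cA_\alpha\subseteq\I$ and a $\subseteq$-increasing chain of families of injections $\cF_\alpha\subseteq\omega^\omega$ such that $\cA_\alpha$ respects $\cF_\alpha$ and both $|\cA_\alpha|,|\cF_\alpha|<\mathfrak{t}$. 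Start with $\cA_0=\cF_0=\emptyset$; at limits take unions (the cardinality bound $|\alpha|<\mathfrak{t}$ preserves the inductive hypothesis, and ``respect'' is continuous under unions).

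At a successor $\alpha+1$, first, if $X_\alpha\not\in\I(\cA_\alpha)$, apply Lemma~\ref{lem:hr1} to $\cA_\alpha$, $\cF_\alpha$ and $X_\alpha$ to get an almost disjoint family $\cA_\alpha'\subseteq\I$ with $\cA_\alpha\subseteq\cA_\alpha'$, $|\cA_\alpha'|<\mathfrak{t}$, such that $\cA_\alpha'$ respects $\cF_\alpha$ and $\cA_\alpha'\cap[X_\alpha]^\omega\neq\emptyset$ (otherwise set $\cA_\alpha'=\cA_\alpha$). Second, if $X_\alpha\not\in\I(\cA_\alpha')$, apply Lemma~\ref{lem:hr2} (using $\mathfrak{t}\leq\mathfrak{a}$) to produce an injection $f_\alpha:\omega\to X_\alpha$ such that $\cA_\alpha'$ respects $\cF_\alpha\cup\{f_\alpha\}$, and set $\cF_{\alpha+1}=\cF_\alpha\cup\{f_\alpha\}$; otherwise set $\cF_{\alpha+1}=\cF_\alpha$. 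In either case put $\cA_{\alpha+1}=\cA_\alpha'$. Finally, let $\cA=\bigcup_{\alpha<\continuum}\cA_\alpha$; clearly $\cA\subseteq\I$ and $\cA$ respects $\bigcup_{\alpha<\continuum}\cF_\alpha$.

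To see that $\cA$ is a MAD family, note that any $Y\in[\omega]^\omega$ equals some $X_\alpha$: at stage $\alpha+1$ either $X_\alpha\in\I(\cA_{\alpha+1})\subseteq\I(\cA)$ or some member of $\cA_{\alpha+1}$ is an infinite subset of $X_\alpha$, and in both cases $Y$ meets $\cA$ in an infinite set. For $K$-homogeneity, fix any $X\not\in\I(\cA)$; the trivial direction $\I(\cA)\leq_K\I(\cA)\restriction X$ is witnessed by the inclusion $X\hookrightarrow\omega$, since $B\in\I(\cA)\Rightarrow B\cap X\in\I(\cA)\restriction X$. For the converse $\I(\cA)\restriction X\leq_K\I(\cA)$, write $X=X_\alpha$; since $X\not\in\I(\cA_{\alpha+1})$, the second branch was triggered and $f_\alpha:\omega\to X$ was added to $\cF_{\alpha+1}$. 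Then $\cA$ respects $f_\alpha$, hence $f_\alpha^{-1}[C]\in\I(\cA)$ for every $C\in\cA$, and therefore for every $C\in\I(\cA)$; in particular for every $B\in\I(\cA)\restriction X$ (viewed as a subset of $\omega$), giving the desired Kat\v{e}tov reduction.

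The only genuine difficulty is arranging everything inside the tall ideal $\I$, and that is already built into Lemma~\ref{lem:hr1}: its proof locates the crucial ``diagonal'' set $T$ inside $\I$ by invoking tallness, which is exactly what distinguishes this version from the original theorem. Everything else is straightforward bookkeeping of the cardinal inequalities $|\cA_\alpha|,|\cF_\alpha|<\mathfrak{t}=\continuum\leq\mathfrak{a}$, which keeps both lemmas applicable at every successor stage.
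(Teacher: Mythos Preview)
Your proof is correct and follows essentially the same approach as the paper's: the same transfinite recursion using Lemmas~\ref{lem:hr1} and~\ref{lem:hr2} alternately at successor stages, with the same verification of MAD-ness and $K$-homogeneity at the end. The only cosmetic differences are that the paper starts with $\cA_0$ a partition of $\omega$ into infinite sets from $\I$ (rather than $\cA_0=\emptyset$) and explicitly invokes the regularity of $\mathfrak{t}$ at limit stages to keep $|\cA_\alpha|,|\cF_\alpha|<\mathfrak{t}$; you should mention the latter, since the bound $|\alpha|<\mathfrak{t}$ alone does not suffice.
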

\begin{proof}
First, let $\{X_\alpha:\alpha<\continuum\}$ be the enumeration of elements of $[\omega]^\omega$. We will inductively construct sequences $(\cA_\alpha)_{\alpha<\continuum}$ and $(\cF_\alpha)_{\alpha<\continuum}$ such that for every $\alpha<\continuum$
\begin{enumerate}
\item\label{point-i} $\cA_\alpha$ is an AD-family such that $\cA_\alpha\subseteq\I$;
\item $\cF_\alpha\subseteq\omega^\omega$ consists of injections;
\item if $\beta<\alpha$ then $\cA_\beta\subseteq \cA_\alpha$ and $\cF_\beta\subseteq \cF_\alpha$;
\item\label{point-iv} $\cA_0$ is a partition of $\omega$ into infinitely many infinite sets belonging to $\I$;
\item $\cF_0=\emptyset$;
\item\label{point-vi} $|\cA_\alpha|<\continuum$ and $|\cF_\alpha|<\continuum$;
\item $\cA_\alpha$ respects $\cF_\alpha$;
\item\label{point-viii}\label{point-mad} there exists $A\in \cA_{\alpha+1}$ such that $A\cap X_\alpha\not\in{\fin}$;
\item\label{point-ix}\label{point-khom}  if $X_\alpha\not\in\I(\cA_{\alpha+1})$ then there is $f\in\cF_{\alpha+1}$ with $\ran(f)\subseteq X_\alpha$.
\end{enumerate}

Fix $0<\alpha<\continuum$ and assume we have constructed $\cA_\beta$ and $\cF_\beta$ for all $\beta<\alpha$.

If $\alpha$ is a limit ordinal then we put $\cA_\alpha=\bigcup_{\beta<\alpha} \cA_\beta$ and $\cF_\alpha=\bigcup_{\beta<\alpha} \cF_\beta$. Since $\mathfrak{t}$ is a regular cardinal (see e.g.~\cite[Proposition~6.4]{MR2768685}), we obtain $|\cA_\alpha|<\continuum$ and $|\cF_\alpha|<\continuum$.

Suppose now that $\alpha=\beta+1$. 
If $X_\beta\in\I(\cA_\beta)$,  we put $\cA_\alpha=\cA_\beta$ and $\cF_\alpha=\cF_\beta$.
If $X_\beta\not\in\I(\cA_\beta)$ then by Lemma~\ref{lem:hr1} we can find an AD-family $\cA_{\alpha}\subseteq\I$ such that $\cA_\beta\subseteq \cA_{\alpha}$, $|\cA_\alpha|<\continuum$, $\cA_\alpha$ respects $\cF_\beta$ and there exists an infinite $A\in\cA_\alpha$ such that $A\subseteq X_\beta$.
In both cases, the families $\cA_\alpha$ and $\cF_\beta$ satisfy items (\ref{point-i})-(\ref{point-viii}). Now, we will define $\cF_\alpha$ to satisfy item (\ref{point-ix}).

If $X_\beta\in\I(\cA_\alpha)$, we put $\cF_\alpha=\cF_\beta$. In the other case, by Lemma~\ref{lem:hr2} there exists an injection $f\in\omega^\omega$ with $\ran(f)\subseteq X_\beta$ such that $\cA_\alpha$ respects $\cF_\beta\cup\{f\}$. Then we put $\cF_\alpha=\cF_\beta\cup\{f\}$. 

Now that we have constructed sequences $(\cA_\alpha)_{\alpha<\continuum}$ and $(\cF_\alpha)_{\alpha<\continuum}$, let 
$$\cA=\bigcup_{\alpha<\continuum} \cA_\alpha \quad \text{and} \quad \cF=\bigcup_{\alpha<\continuum} \cF_\alpha.$$ 
Observe that $\cA$ respects $\cF$. Indeed, take any $A\in\cA$ and $f\in\cF$. The sequences $(\cA_\alpha)$ and $(\cF_\alpha)$ are increasing, thus we can find $\alpha<\continuum$ such that $A\in\cA_{\alpha}$ and $f\in\cF_\alpha$. Since $\cA_\alpha$ respects $\cF_\alpha$, we obtain $f^{-1}[A]\in\I(\cA_\alpha)\subseteq\I(\cA)$. 

By items~(\ref{point-i}), (\ref{point-iv}) and (\ref{point-mad}) of the construction, $\cA$ is a MAD family such that  $\cA\subseteq\I$.

To finish the proof, we need to show that $\I(\cA)$ is $K$-homogeneous. Take $X\not\in\I(\cA)$. Then there exists $\alpha<\continuum$ such that $X=X_\alpha$ and clearly $X_\alpha\not\in \I(\cA_{\alpha+1})$. By item~(\ref{point-khom}) of the construction there exists $f\in\cF_{\alpha+1}\subseteq\cF$ with $\ran(f)\subseteq X_\alpha$. Since $\cA$ respects $\cF$, we know that for every $A\in\cA$ we have $f^{-1}[A]\in\I(\cA)$.
Therefore, this $f$ is a witness for $\I(\cA)\restriction X\leq_K \I(\cA)$.  
\end{proof}

\begin{corollary}
\label{cor:non-Borel-intersection-of-matrix}
Assume $\mathfrak{t}=\continuum$.
There exists a non-Borel ideal that is the intersection of some matrix ideals.
\end{corollary}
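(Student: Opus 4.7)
The plan is to apply Theorem~\ref{thm:MAD-K-homo} to the tall ideal $\I_d$ of sets of asymptotic density zero. Under $\mathfrak{t}=\continuum$, this produces a MAD family $\cA\subseteq\I_d$ such that $\I(\cA)$ is K-homogeneous, and $\I(\cA)$ will be our witness.

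First I would verify that $\I(\cA)$ is an intersection of matrix summability ideals by appealing to Theorem~\ref{thm:matrix-intersection-via-KAT-and-Z}. Since $\cA\subseteq\I_d$ and $\I_d$ is an ideal, the generated ideal satisfies $\I(\cA)\subseteq\I_d$, and so the identity map on $\omega$ witnesses $\I(\cA)\leq_K\I_d$. Combined with K-homogeneity, this yields $\I(\cA)\upharpoonright A\approx_K\I(\cA)\leq_K\I_d$ for every $A\notin\I(\cA)$, which is precisely the hypothesis of Theorem~\ref{thm:matrix-intersection-via-KAT-and-Z}. Concretely, the function witnessing $\I(\cA)\upharpoonright A\leq_K\I(\cA)$ provided by K-homogeneity already maps $\I(\cA)\upharpoonright A$-small sets to $\I(\cA)$-small, hence to $\I_d$-small, sets.

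The main obstacle is showing that $\I(\cA)$ is non-Borel. The strategy is to invoke Mathias's classical theorem that no infinite MAD family on $\omega$ is analytic, combined with a definability argument that transfers analyticity from the ideal to its generating family. The difficulty is that the natural characterizations of ``$A\in\cA$'' in terms of $\I(\cA)$ — for example, $A\in\I(\cA)\cap[\omega]^\omega$ together with a minimality condition universally quantified over members of $\I(\cA)$ — only recover $\cA$ as a $\Pi^1_1$ set when $\I(\cA)$ is Borel, not as a $\Sigma^1_1$ set, and Mathias's theorem alone does not preclude coanalytic MAD families. The cleanest route is therefore to quote the folklore strengthening of Mathias's theorem asserting that the ideal generated by any infinite MAD family on $\omega$ is itself non-analytic; this is proved by running a Mathias-style fusion/diagonalization argument directly on analytic codes for $\I(\cA)$, producing an infinite set witnessing non-maximality and thereby contradicting MADness of $\cA$. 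Once this strengthening is in hand, non-Borel-ness of $\I(\cA)$ is immediate, completing the corollary.
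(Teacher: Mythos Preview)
Your proposal is correct and follows essentially the same route as the paper: apply Theorem~\ref{thm:MAD-K-homo} with $\I=\I_d$, use $\cA\subseteq\I_d$ together with K-homogeneity to get $\I(\cA)\upharpoonright A\leq_K\I_d$ for all $A\notin\I(\cA)$, and invoke Theorem~\ref{thm:matrix-intersection-via-KAT-and-Z}. For non-Borelness the paper simply cites Mathias's result (\cite[Proposition~4.6]{MR491197}) that the ideal generated by an infinite MAD family is never analytic --- precisely the ``folklore strengthening'' you arrive at, so your detour through the definability obstacle is unnecessary but harmless.
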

\begin{proof}
Take $\I(\cA)$ from Theorem~\ref{thm:MAD-K-homo} with $\I=\I_d$. By \cite[Proposition~4.6]{MR491197}, $\I(\cA)$ is a non-Borel ideal.  Since $\cA\subseteq\I_d$, we have $\I(\cA)\subseteq\I_d$, thus $\I(\cA)\leq_K\I_d$.
Moreover, by $K$-homogeneity of $\I(\cA)$, for every $A\not\in\I(\cA)$ we obtain 
$$\I(\cA)\upharpoonright A\leq_K \I(\cA)\leq_K \I_d,$$
thus, by Theorem~\ref{thm:GMV-char}, $\I(\cA)$ is an intersection of matrix ideals.
\end{proof}


\bibliographystyle{amsplain}
\bibliography{nonpathology}

\end{document}